\numberwithin{equation}{section}
\setlist[enumerate,1]{label={\rm(\roman*)}, ref={\rm\roman*}} 
\theoremstyle{plain}
	\newtheorem{thm}{Theorem}[section]
	\newtheorem{prop}[thm]{Proposition}
	\newtheorem{lem}[thm]{Lemma}
	\newtheorem{cor}[thm]{Corollary}
	\newtheorem{conj}[thm]{Conjecture}
	\newtheorem{lem/defn}[thm]{Lemma/Definition}
\theoremstyle{definition}
	\newtheorem{defn}[thm]{Definition}
	\newtheorem{notation}[thm]{Notation}
	\newtheorem{const}[thm]{Construction}
\theoremstyle{remark}
	\newtheorem{rem}[thm]{Remark}
	\newtheorem{ex}[thm]{Example}
        \newtheorem{warn}[thm]{Warning}
\def\nc{\newcommand}
\def\on{\operatorname}
\nc{\edit}[1]{\marginpar{\footnotesize{#1}}}
\nc{\C}{\mathbb{C}}
\nc{\Z}{\mathbb{Z}}
\nc{\PP}{\mathbb{P}}
\nc{\R}{\mathbb{R}}
\nc{\F}{\mathbb{F}}
\nc{\AAA}{\mathbb{A}}
\nc{\LL}{\mathbb{L}}
\nc{\OO}{\mathcal{O}}
\nc{\X}{\EuScript{X}}
\nc{\sZ}{\EuScript{Z}}
\nc{\id}{{\on{id}}}
\nc\Hom{{\on{Hom}}}
\nc\cone{{\on{cone}}}
\nc{\Rep}{{\on{Rep}}}
\nc\Ob{{\on{Ob}}}
\nc\Spec{{\on{Spec}}}
\nc\Mod{{\on{Mod}}}
\nc\coMod{{\on{coMod}}}
\nc\Perf{{\on{Perf}}}
\nc\End{{\on{End}}}
\nc{\into}{\hookrightarrow}
\nc{\tr}{\on{tr}}
\nc{\ev}{\on{ev}}
\nc{\im}{\on{im}}
\nc{\Mot}{\on{Mot}}
\nc{\filstack}{\AAA^1 / \GG_m}
\nc{\pt}{\on{pt}}
\nc{\formalgroup}{\widehat{\GG}}
\nc{\coker}{\on{coker}}
\nc{\rk}{\on{rank}}
\nc{\TOP}{\on{Top}_{\mathbb{C}}^{s}}
\nc{\gr}{\on{gr}}
\nc{\Catperf}{\text{Cat}^{\text{perf}}}
\nc{\Sym}{\on{Sym}}
\nc{\xra}{\xrightarrow}
\nc{\Bet}{\mathbf{Betti}_{X}}
\nc{\spectralift}{D(\formalgroup^{un})}
\nc{\Fix}{\mathsf{Fix}}
\nc{\codim}{\on{codim}}
\nc{\Fred}{\on{Fred}}
\nc{\colim}{\on{colim}}
\nc{\KK}{{\bf K}}
\nc{\Sp}{\on{Sp}}
\nc{\onto}{\twoheadrightarrow}
\nc{\A}{\mathbb{A}}
\nc{\Aff}{\on{Aff}}
\nc{\SH}{\on{SH}}
\nc{\QCoh}{\on{QCoh}}
\nc{\Alg}{\on{Alg}}
\nc{\Br}{\on{Br}}
\nc{\ta}{\widetilde{\a}}
\nc{\Shv}{\on{Shv}}
\nc{\GG}{\mathbb{G}}
\nc{\an}{\on{an}}
\nc{\Pre}{\on{Pre}}
\nc{\qc}{\on{qc}}
\nc{\op}{\on{op}}
\nc{\shEnd}{{\mathcal End}}
\nc{\Sph}{\mathbb{S}}
\nc{\Map}{\on{Map}}
\nc{\Vect}{\on{Vect}}
\nc{\holim}{\on{holim}}
\def\H{\mathcal{H}}
\def\A{\mathcal{A}}
\def\a{\alpha}
\def\Perf{\on{Perf}}
\def\Sp{\on{Sp}}
\def\fib{\on{fib}}
\nc{\W}{\mathbb{W}}
\def\QCoh{\on{QCoh}}
\nc{\lra}{\longrightarrow}
\nc\Def{{\on{Def}}}
\nc\Grp{{\on{Grp}}}
\nc\fil{{\on{fil}}}
\nc\dStk{{\on{dStk}}}
\nc\sStk{{\on{sStk}}}
\nc\SStk{{\on{SStk}}}
\nc\Stk{{\on{Stk}}}
\nc\KU{{\on{KU}}}
\nc\un{{\mathrm{un}}}
\nc\ad{{\mathrm{ad}}}
\nc\sm{{\mathrm{sm}}}
\nc\red{{\mathrm{red}}}
\nc\et{{\mathrm{\acute{e}t}}}
\nc\gp{{\mathrm{gp}}}
\nc\cn{{\mathrm{cn}}}
\nc\fd{{\mathrm{fd}}}
\nc\cl{{\mathrm{cl}}}
\newcommand{\supth}[1]{\ensuremath{#1^{\mathrm{th}}}}
\title{Filtered formal groups, Cartier duality, \\ and derived algebraic geometry}
\author{Tasos Moulinos}
\address{Institute of Advanced Study,
1 Einstein Drive,
Princeton, NJ 08540, USA }
\email{tmoulinos@gmail.com}
\begin{document}


\maketitle

\begin{prelims}

\DisplayAbstractInEnglish

\bigskip

\DisplayKeyWords

\medskip

\DisplayMSCclass

\end{prelims}


\newpage

\setcounter{tocdepth}{1}

\tableofcontents


\section{Introduction}\label{sec1}
The starting point of this work arises from the construction in  \cite{moulinos2019universal} of the \emph{filtered circle}, an object of algebro-geometric nature, capturing the $k$-linear homotopy type of $S^1$, the topological circle. This construction is motivated by the schematization problem due to Grothendieck, stated most generally in finding a purely algebraic description of the $\Z$-linear homotopy type of an arbitrary topological space $X$. 

In the process of doing this, the authors realized that there was an inextricable link between this construction and the theory of formal groups and Cartier duality, as set out in \cite{cartier1962groupes}. 
We briefly review the relationship. The filtered circle is obtained as the classifying stack $B \mathbb{H} $,
where $\mathbb{H}$ is a $\GG_m$-equivariant family of group schemes parametrized by the affine line, $\AAA^1$. This family of schemes interpolates between two affine group schemes, $\mathsf{Fix}$ and $\mathsf{Ker}$; these can be traced to the work of \cite{sekiguchi2001note}, where they are shown to arise via Cartier duality from the formal multiplicative and formal additive groups, $\widehat{\GG_m}$ and $\widehat{\GG_a}$, respectively. The filtered circle $S^1_{\fil}$
is then obtained as $B \mathbb{H}$, the classifying stack over $\filstack$ of $\mathbb{H}$. By taking the derived mapping space out of $S^1_{\fil}$ in $\filstack$-parametrized derived stacks, one recovers precisely Hochshild homology together with a functorial filtration. 

There is no reason to stop at $\widehat{\GG_m}$ or $\widehat{\GG_a}$, however. In \cite{sekiguchi2001note}, the authors proposed, given an arbitrary $1$-dimensional formal group $\formalgroup$, the following generalized notion of Hochshild homology of simplicial commutative rings: 
$$
\on{HH}^{\formalgroup}(-)\colon \on{sCAlg}_k \lra \on{sCAlg}_k, \quad A \longmapsto \on{HH}^{\formalgroup}(A) := R \Gamma ( \Map_{\dStk_k}(B \formalgroup^\vee, \Spec A )).
$$
The right-hand side is  the derived mapping space out of $B \formalgroup^\vee$, the classifying stack of the Cartier dual of $\formalgroup$. 
For $\formalgroup= \widehat{\GG_m}$ one recovers Hochshild homology via a  natural equivalence of derived schemes  
$$
\Map(B \mathsf{Fix}, X) \lra \Map(S^1, X),  
$$
and for $\formalgroup= \widehat{\GG_a}$ one recovers the derived de Rham  algebra (\textit{cf}. \cite{toen2011derham}) via an equivalence  
$$
\Map(B \mathsf{Ker}, X) \simeq \mathbb{T}_{X|k}[-1] = \Spec(\on{Sym}(\mathbb{L}_{X|k}[1])
$$
with the shifted (negative) tangent bundle. One may now ask the following natural questions: if one replaces $\widehat{\GG_m}$ with  an arbitrary formal group $\formalgroup$, does one obtain a  degeneration over $\filstack$ similar to that in the previous paragraph? Is there a sense in which such a degeneration is canonical?

The overarching aim of this paper is to address some of these questions by further systematizing some of the above ideas, particularly using further ideas from spectral and derived algebraic geometry.

\subsection{Filtered formal groups}
The first main undertaking of this paper is to introduce a notion of \emph{filtered formal group} over a (discrete) ring $R$. For now, we give the following rough definition, postponing the full definition to Section~\ref{filteredformalgroupsection}. 

\begin{defn}[\textit{cf}.\ Definition~\ref{cogroupdefinition}] 
Let $R$ be a discrete commutative ring. A \emph{filtered formal group} is an abelian cogroup object $A$ in the category of complete filtered algebras $\on{CAlg}(\widehat{\on{Fil}}_R)$ (see Definition~\ref{completenessfiltered} for an explanation of the notation $\widehat{\on{Fil}_R}$) which are discrete at the level of underlying algebras. 
\end{defn}

Heuristically, these give rise to stacks 
$$
\formalgroup \lra \filstack, 
$$
for which the pullback $\pi^*(\formalgroup)$ along  the smooth atlas $\pi\colon \AAA^1 \to \filstack$ is a formal group over $\AAA^1$ in the classical sense. 

From the outset we restrict to a full subcategory of complete filtered algebras, for which there exists a well-behaved duality theory.  Our setup
is inspired by the framework  of \cite{ellipticII} and the notion of smooth coalgebra therein. Namely, we restrict to complete filtered algebras that arise as the duals of \emph{smooth filtered coalgebras} (\textit{cf}.\ Definition~\ref{smoothfilteredcoalg}). The abelian cogroup structure on a complete filtered algebra $A$ then corresponds to the structure of an abelian group object on the corresponding coalgebra. As everything in sight is discrete, hence $1$-categorical (\textit{cf}.\ Remark~\ref{abelianobjects1cat}), this is precisely the data of a comonoid in smooth coalgebras, \textit{i.e.}, a filtered Hopf algebra. 
Inspired by the classical Cartier duality correspondence over a field between formal groups and affine group schemes, we refer to this as filtered Cartier duality.

\begin{rem}
We acknowledge that the phrase ``Cartier duality'' has a variety of different meanings throughout the literature (\textit{e.g.}, duality between finite group schemes, $p$-divisible groups, \textit{etc}.) For us, this will always mean a contravariant correspondence between (certain full subcategories of) formal groups and affine group schemes, originally observed by Cartier over a field in \cite{cartier1962groupes}. 
\end{rem}

\begin{rem}
In this paper we are concerned with filtered formal groups $\formalgroup \to \filstack$ whose ``fiber over $\Spec(R) \to \filstack$''  recovers a classical (discrete) formal group. We conjecture that the duality theory of Section~\ref{filteredformalgroupsection} holds true in the filtered, spectral setting. Nevertheless, as this takes us away from our main applications, we have stayed away from this level of generality.  
\end{rem}
As it turns out, the notion of a complete filtered algebra, and hence ultimately the notion of a filtered formal group, is of a rigid nature. To this effect, we demonstrate the following unicity result on complete filtered algebras $A_*$ with a specified associated graded (before taking any group structure into account). In order to state this, we recall that given any commutative ring $A$ with ideal $I$, there exists a filtered object $F_I^{*}(A)$, the \emph{adic filtration} on $A$. 

\begin{thm}\label{introadicunicity} 
Let $A$ be a commutative ring which is complete with respect to the $I$-adic topology induced by some ideal $I \subset A$. Let $A_* \in \on{CAlg}(\widehat{\on{Fil}}_R)$ be a $($discrete$)$ complete filtered algebra with underlying object $A$. Suppose there is an inclusion 
$$
A_1 \lra I
$$
of $A$-modules 
inducing an equivalence 
$$
\gr(A_*) \simeq \gr(F_I^*(A)) 
$$
of graded objects, where $I/I^2$ is of pure weight $1$. Then $A_* = F_{I}^*A$; namely,  the filtration in question is the $I$-adic filtration.  
\end{thm}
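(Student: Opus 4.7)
The plan is to prove $A_n = I^n$ for all $n \geq 0$ by induction on $n$, reading off each equality from the corresponding weight of the given graded isomorphism. The crucial point is that the equivalence $\gr(A_n) \simeq \gr(F_I^*(A)) = \Sym_{gr}(I/I^2)$ is induced by the $A$-module inclusion $A_1 \hookrightarrow I$, making it naturally an equivalence of graded $A$-algebras, with each weight component an $A$-linear map.

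For the base case $A_1 = I$, I would examine the weight-zero component, giving $A/A_1 \simeq A/I$ as $A$-algebras. Since both are quotients of $A$, an $A$-algebra isomorphism between them is forced to coincide with the canonical map induced by $\id_A$, which is well-defined and an isomorphism precisely when $A_1 = I$. A backup route: the weight-$1$ iso gives $I = A_1 + I^2$, and iterating (using $I \cdot A_1 \subseteq A_1$) yields $I = A_1 + I^k$ for all $k \geq 1$. Setting $M := I/A_1$ one gets $IM = M$; since $I$-adic completeness places $I$ in the Jacobson radical of $A$ (via the unit $(1+x)^{-1} = \sum (-x)^n$), Nakayama's lemma concludes $M = 0$ under suitable finiteness of $I$.

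For the inductive step, assume $A_k = I^k$ for $k \leq n$, and examine the weight-$n$ component $\phi_n \colon A_n/A_{n+1} \to I^n/I^{n+1}$. Since $\gr(A_n) \simeq \Sym_{gr}(I/I^2)$ is generated in degree one, $\phi_n$ is determined multiplicatively by the weight-$1$ iso $\phi_1$, which after $A_1 = I$ is the identity on $I/I^2$. Hence $\phi_n(\overline{b_1 \cdots b_n}) = \overline{b_1 \cdots b_n}$ in $I^n/I^{n+1}$ for $b_i \in I$; by $A$-linearity and $A_n = I^n$, this identifies $\phi_n$ with the canonical quotient $I^n/A_{n+1} \to I^n/I^{n+1}$. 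Its well-definedness forces $A_{n+1} \subseteq I^{n+1}$, and its being an isomorphism forces the reverse inclusion, so $A_{n+1} = I^{n+1}$. The main obstacle is the base case: deducing $A_1 = I$ requires carefully leveraging the $A$-linear (as opposed to merely $k$-linear) structure of the inducing inclusion, or invoking a Nakayama-type argument that may need additional finiteness on $I$.
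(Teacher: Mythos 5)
Your proof is correct, but it takes a genuinely more elementary route than the paper's. The paper extends the inclusion $A_1 \to I$ multiplicatively to a map of filtered algebras $A_\bullet \to F_I^*(A)$ (spelling out the degree-$2$ case), notes that by hypothesis this map is an isomorphism on associated graded, and then invokes the conservativity of $\gr$ on \emph{complete} filtered objects to conclude the filtered map is an equivalence; a final poset argument (isomorphic subobjects of $A$ must be equal) upgrades the equivalence to the equality $A_n = I^n$. You instead run a weight-by-weight induction: the weight-zero piece of the induced graded map is the canonical surjection $A/A_1 \to A/I$, so its being an isomorphism forces $A_1 = I$ outright --- this makes your Nakayama backup, with its extra finiteness hypothesis, unnecessary, and note the paper only recovers $A_1 = I$ \emph{after} the conservativity step --- and given $A_n = I^n$, the weight-$n$ piece is the canonical surjection $I^n/A_{n+1} \to I^n/I^{n+1}$, whose injectivity forces $A_{n+1} = I^{n+1}$. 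What your approach buys: it never uses completeness or any abstract property of $\gr$, and it lands directly on equalities of submodules rather than isomorphisms, so the final poset step disappears. What the paper's approach buys: the conservativity argument is the formulation that survives outside the discrete, $1$-categorical setting, whereas your induction argues with elements and kernels of maps between subobjects of $A$. Both arguments share the same implicit preliminary --- that the inclusion $A_1 \to I$ really does extend to a filtered algebra map with $A_n \subseteq I^n$ in every degree --- which you absorb into the phrase ``well-definedness forces $A_{n+1} \subseteq I^{n+1}$'' and the paper illustrates via the degree-$2$ fiber diagram.
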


For example, if $A$ is an augmented algebra, complete with respect to the augmentation ideal $I$, there can only be one (multiplicative) filtration on $A$ satisfying the conditions of Theorem~\ref{introadicunicity}, the $I$-adic filtration.

We will observe that the comultipliciation on the coordinate algebra of a formal group preserves this filtration, so that the formal group structure lifts uniquely as well. 

\subsection{Deformation to the normal cone}

Our next order of business is to study a deformation to the normal cone construction in the setting of derived algebraic geometry. In essence this takes a closed immersion  $\EuScript{X} \to \EuScript{Y}$ of classical schemes and gives a $\GG_m$-equivariant family of formal schemes over  $\AAA^1$, generically equivalent to the formal completion $\widehat{\EuScript{Y}_{\EuScript{X}}}$ which degenerate to the normal bundle of $N_{\EuScript{X}|\EuScript{Y}}$ formally completed at the identity section. When applied to a formal group, $\formalgroup$ produces a $\GG_m$-equivariant $1$-parameter family of formal groups over the affine line.  

\begin{rem}
A construction of deformation to the normal cone of a similar nature has already appeared in the book of Rozenblyum and Gaitsgory, \textit{cf}.\ \cite{gaitsgory2017study}, in  characteristic zero. Here we make no such restrictions on characteristic, and therefore the following result does not follow directly from their work. 
\end{rem}

\begin{thm}\label{maintheorem1}
Let $f\colon \Spec(R) \to \formalgroup$ be the unit section of a formal group $\formalgroup$. Then there exists a  stack $\Def_{\filstack}(\formalgroup) \to \AAA^1/ \GG_m$ such that there is a map
$$
 \AAA^1 / \GG_m \lra \Def_{\filstack}\left(\formalgroup\right)
$$
whose fiber over $1 \in \AAA^1 / \GG_m$ is 
$$
\Spec(k) \lra \formalgroup
$$
and whose fiber over $0 \in \AAA^1/\GG_m$ is 
$$
\Spec(R)  \lra \widehat{T_{\formalgroup| R}} \simeq \widehat{\GG_a},
$$
the formal completion of the tangent Lie algebra of\, $\formalgroup$. 
\end{thm}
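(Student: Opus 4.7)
The plan is to realize $Def_{\filstack}(\formalgroup) \to \filstack$ as the filtered stack associated, via the Rees equivalence, to the $I$-adic filtration on the coordinate algebra of $\formalgroup$, and to use Theorem~\ref{introadicunicity} to pin this filtration down canonically. Recall that for a closed immersion $\EuScript{X} \hookrightarrow \EuScript{Y} = \Spec A$ cut out by an ideal $I \subset A$, the $I$-adic filtration $F_I^* A = \{\cdots \to I^2 \to I \to A\}$ is a complete filtered algebra, and the standard Rees equivalence between complete filtered algebras and $\GG_m$-equivariant algebras over $\AAA^1$ converts it into a flat family over $\filstack$ whose fiber at $1$ is $\Spec A$ and whose fiber at $0$ is the normal cone $\Spec \gr_I A$.

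Specializing to the unit section $f: \Spec k \to \formalgroup$, I would take $A = \mathcal{O}(\formalgroup)$ with $I$ the augmentation ideal and set $Def_{\filstack}(\formalgroup) := \Spec(F_I^* A)$ under the Rees equivalence. To upgrade this from a family of schemes to a family of formal groups, I would observe that the comultiplication $\Delta: A \to A \widehat{\otimes} A$ preserves the $I$-adic filtration: for $v \in I$, applying $\epsilon \otimes \epsilon$ to $\Delta(v)$ gives $\epsilon(v) = 0$, hence $\Delta(v) \in I \widehat{\otimes} A + A \widehat{\otimes} I$, and iterating yields $\Delta(I^n) \subset \sum_{i+j=n} I^i \widehat{\otimes} I^j$. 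Thus $F_I^* A$ carries the structure of a complete filtered Hopf algebra in the sense of Section~\ref{filteredformalgroupsection}, and Theorem~\ref{introadicunicity} guarantees that this is the unique complete multiplicative filtration on $A$ whose associated graded is $\on{Sym}_{gr}(I/I^2)$ with $I/I^2$ of weight $1$. The interpolating map $\EuScript{X} \times \filstack \to Def_{\filstack}(\formalgroup)$ is then obtained as the Rees construction of the augmentation $\epsilon: A \to k$.

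Next I would identify the two fibers. At $1 \in \filstack$ the filtration collapses and one recovers $\Spec A = \formalgroup$ together with its unit section. At $0 \in \filstack$ the fiber is $\Spec$ of the associated graded Hopf algebra: since $\formalgroup$ is a smooth one-dimensional formal group, $I$ is regular with $I/I^2 \simeq T_{\formalgroup|k}^\vee$, so $\gr_I A \simeq \on{Sym}(I/I^2)$. On the generating piece the induced coproduct must reduce to the primitive one $v \mapsto v \otimes 1 + 1 \otimes v$, because the quadratic correction to $\Delta(v)$ lies in $I \widehat{\otimes} I$ and is killed at the relevant graded weight. This identifies the special fiber with $\Spec \on{Sym}(T_{\formalgroup|k}^\vee) = \widehat{T_{\formalgroup|k}} \simeq \widehat{\GG_a}$.

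The main obstacle, in my view, is not the construction itself but verifying that the Rees construction interacts correctly with the completed tensor product $\widehat{\otimes}$, ensuring that the resulting filtered Hopf algebra genuinely lies in the full subcategory of complete filtered algebras dual to smooth filtered coalgebras considered in Section~\ref{filteredformalgroupsection}. Once this compatibility is in place, Theorem~\ref{introadicunicity} reduces the identification of the two fibers to a single check at the level of associated gradeds, which is the computation above.
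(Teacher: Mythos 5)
Your construction is correct in substance but follows a genuinely different route from the paper. The paper builds $Def_{\filstack}(\formalgroup)$ geometrically: it first constructs a cogroupoid $S^{0,\bullet}_{fil}$ over $\filstack$ (with degree-one piece the "filtered $S^0$" degenerating $\Spec k \sqcup \Spec k$ to $\Spec k[\epsilon]/(\epsilon^2)$), forms the mapping stacks $\Map_{\EuScript{Y}\times\filstack}(S^{0,\bullet}_{fil}, \EuScript{X}\times\filstack)$ to get a groupoid object over $\filstack$ with degree-one piece interpolating between $\EuScript{X}\times_{\EuScript{Y}}\EuScript{X}$ and $T_{\EuScript{X}|\EuScript{Y}}$, and then takes its colimit; the identification of the two fibers with $\widehat{\EuScript{Y}_{\EuScript{X}}}$ and $\widehat{N_{\EuScript{X}|\EuScript{Y}}}$ is done by derived formal descent (Theorem \ref{formaldescent}), i.e.\ the nerve of a closed immersion recovers the formal completion. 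Only afterwards (Proposition \ref{gottausethistoo}, Corollary \ref{Adicfiltrationdefcone}) is the resulting filtration identified with the $I$-adic one, and there the unicity result is genuinely needed. You instead \emph{define} the object as the Rees family of the $I$-adic filtration, which makes Corollary \ref{Adicfiltrationdefcone} tautological and renders the appeal to Theorem \ref{introadicunicity} superfluous for the existence statement being proved (it only adds canonicity). Your counit argument that $\Delta(I)\subset I\widehat\otimes A + A\widehat\otimes I$, hence $\Delta(I^n)\subset\sum_{i+j=n}I^i\widehat\otimes I^j$, is cleaner and more coordinate-free than the paper's computation with the formal group law in Corollary \ref{uniqueformalgroupstructure}. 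What the paper's approach buys is generality and functoriality: it applies to an arbitrary closed immersion of locally Noetherian schemes (no affineness, augmentation, or choice of ideal presentation required), it produces the pointing map $\EuScript{X}\times\filstack\to Def_{\filstack}(\EuScript{X}/\EuScript{Y})$ intrinsically as a degeneracy of the groupoid, and the group structure on the deformation is inherited automatically from functoriality of mapping stacks. Your approach is more elementary but confined to the augmented affine (formal) setting.

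Two small imprecisions to fix. First, $\Spec \on{Sym}(T^\vee_{\formalgroup|k})$ is $\GG_a$, not $\widehat{\GG_a}$; the special fiber of the theorem is the \emph{formal completion} of the normal bundle, so you must take $\on{Spf}$ of the completed associated graded (equivalently, $\on{coSpec}$ of the divided power coalgebra), consistently with interpreting a complete filtered algebra as a formal affine scheme over $\filstack$ as in Section \ref{filteredformalgroupsection}. Second, the identification $\gr_I A \simeq \on{Sym}(I/I^2)$ uses that the unit section is a regular (lci) immersion, i.e.\ smoothness of $\formalgroup$; this holds in the coalgebraic framework of the paper but should be stated. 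Your closing concern --- that the Rees family of $F_I^*A$ with its completed tensor structure lands in the subcategory $\mathcal{D}$ dual to smooth filtered coalgebras --- is exactly the content of Proposition \ref{compatibilityofdual} and Proposition \ref{gottausethistoo} in the paper, so it is a real step but not an obstacle.
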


We would like to point out that the constructions occur in the derived setting, but the outcome is a degeneration between formal groups, which belongs to the realm of classical geometry.  
One may then apply the aforementioned  \emph{filtered Cartier duality} to this construction to obtain a group scheme $\Def_{\filstack}(\formalgroup)^\vee$ over $\filstack$, thereby equipping the cohomology of the (classical) Cartier dual $\formalgroup^\vee$ with a canonical filtration. 

By \cite[Proposition 7.3]{geometryofilt}, $\OO(\Def_{\filstack}(\formalgroup))$ acquires the structure of a complete filtered algebra (completeness follows by Proposition~\ref{Prop:5.12}).  We have the following characterization of the resulting filtration on $\OO(\Def_{\filstack}(\formalgroup)$ relating the deformation to the normal cone construction with the $I$-adic filtration of Theorem~\ref{introadicunicity}. 

\begin{cor}\label{Adicfiltrationdefcone}
Let $\formalgroup$ be a formal group over $k$. Then there exists a unique filtered formal group with  $\OO(\formalgroup)$ as its underlying object. In particular, there is an equivalence 
$$
\OO(\Def_{\filstack}\left(\formalgroup\right) \simeq F^*_{I}A 
$$ 
of abelian cogroup objects in $\on{CAlg}(\widehat{\on{Fil}}_k)$. Here the right-hand side denotes the $I$-adic filtration on the coordinate ring $A$, for $I$ the augmentation ideal corresponding to the inclusion of the unit in $\formalgroup$. 
\end{cor}

Hence the deformation to the normal cone construction applied to a formal group $\formalgroup$ produces a \emph{filtered formal group}.

Next, we  specialize to the case of the formal multiplicative group $\widehat{\GG_m}$.  By putting Cartier duality together with Corollary~\ref{Adicfiltrationdefcone}, we recover the filtration on the group scheme 
$$
\mathsf{Fix} := \on{Ker}(F -1 \colon \W(-) \lra \W(-)) 
$$
of Frobenius fixed points on the Witt vector scheme. In particular, we show that this filtration arises, via Cartier duality, from a certain $\GG_m$-equivariant family of formal groups over $\AAA^1$. As a consequence, the formal group defined is precisely an instance of the  deformation to the normal cone of the unit section $\Spec(k)\to \widehat{\GG_m}$. 

\begin{thm}[\textit{cf.}~Corollary~\ref{Corollary 6.1}] \label{Thm:1.6}
Let $\mathbb{H} \to \filstack$ be the filtered group scheme of\, \cite{moulinos2019universal}. This arises as the Cartier dual $\Def_{\filstack}(\formalgroup_m)^\vee$ of the deformation to the normal cone of the unit section $\Spec(k) \to \GG_m$. Namely, there exists an equivalence of group schemes over $\filstack$
$$
\Def_{\filstack}(\GG_m)^\vee \lra \mathbb{H}.
$$
Together with Corollary~\ref{Adicfiltrationdefcone}, this implies that the HKR filtration on Hochschild homology  is functorially induced, via filtered Cartier duality, by the $I$-adic filtration on $\OO(\formalgroup_m) \simeq k[[t]]$. 
\end{thm} 

\begin{rem}
As a consequence of the uniqueness of Corollary~\ref{Adicfiltrationdefcone}, the filtration on $\OO(\widehat{\GG_m})$ coming from the deformation to the normal cone coincides with the filtration due to Sekiguchi--Suwa in \cite{sekiguchi2001note} given by the formal completion of the degeneration of $\GG_m$ to $\GG_a$. Together with Theorem~\ref{Thm:1.6}, this implies that the deformation to the normal cone is, via the filtered Cartier duality constructed in this paper, the key geometric source of the filtration on Hochschild homology.  
\end{rem}

\subsection{Filtration on $\formalgroup$-Hochschild homology}
One may  of course apply the deformation to the normal cone construction to an arbitrary formal group over any base commutative ring. As a consequence, one obtains a canonical filtration on the aforementioned $\formalgroup$-Hochschild homology.  

\begin{cor}[\textit{cf.}~Theorem\ref{Thm:7.3}]
Let $\formalgroup$ be an arbitrary formal group. The functor 
$$
\on{HH}^{\formalgroup}(-)\colon \on{sCAlg}_R \lra \Mod_R
$$
admits a refinement to the $\infty$-category of filtered $R$-modules
$$
\widetilde{\on{HH}^{\formalgroup}(-)}\colon \on{sCAlg}_R \lra \Mod_R^{\on{filt}}
$$
such that
$$
\on{HH}^{\formalgroup}(-) \simeq \on{} \colim_{(\Z, \leq)}\widetilde{\on{HH}^{\formalgroup}(-)}. 
$$
In other words, $\on{HH}^{\formalgroup}(A)$ admits an exhaustive filtration for any formal group $\formalgroup$ and simplicial commutative algebra $A$. 
\end{cor}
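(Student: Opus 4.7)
The plan is to filter $\mathrm{HH}^{\formalgroup}(A)$ by replacing the Cartier dual $B\formalgroup^{\vee}$ with the $\filstack$-parametrized classifying stack $BDef_{\filstack}(\formalgroup)^{\vee}$. The latter is obtained by combining Theorem \ref{maintheorem1}, which produces the deformation to the normal cone $Def_{\filstack}(\formalgroup)$, with filtered Cartier duality as developed in the earlier sections of the paper; by construction its restriction along the open point $1 \in \filstack$ recovers $B\formalgroup^{\vee}$. I would then define
$$
\widetilde{\mathrm{HH}^{\formalgroup}}(A) := R\Gamma\Bigl(\Map_{/\filstack}\bigl(BDef_{\filstack}(\formalgroup)^{\vee},\, \Spec(A) \times \filstack\bigr)\Bigr),
$$
interpreted as an object of $\QCoh(\filstack)$, and transport it across the Rees equivalence $\QCoh(\filstack) \simeq \Mod_R^{filt}$ to obtain the desired filtered refinement.

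Next, I would identify $\colim_{(\Z, \leq)} \widetilde{\mathrm{HH}^{\formalgroup}}(A)$ with $\mathrm{HH}^{\formalgroup}(A)$. Under the Rees dictionary, passage to the underlying module corresponds to pulling the associated sheaf back along the open immersion $\Spec(k) \simeq [\GG_m/\GG_m] \hookrightarrow \filstack$ classifying $1 \in \AAA^1$. Granted that this pullback commutes with the formation of the parametrized mapping stack and with $R\Gamma$, the identification reduces to the fact that $BDef_{\filstack}(\formalgroup)^{\vee}$ restricts to $B\formalgroup^{\vee}$ along this map, which is exactly the content of Theorem \ref{maintheorem1} combined with filtered Cartier duality. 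Exhaustiveness of the filtration is then tautological from this identification, yielding the stated colimit formula.

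The main technical obstacle is the base-change assertion in the previous paragraph: a priori, parametrized mapping stacks need not commute with arbitrary pullback. However, since $\Spec(k) \hookrightarrow \filstack$ is an open immersion and $BDef_{\filstack}(\formalgroup)^{\vee}$ is the classifying stack of an affine group scheme over $\filstack$, this should follow by flat base change for the relative mapping stack functor together with the affineness of the classifying stack fibers, which ensures that restriction to the generic point of $\filstack$ is well-behaved. Should there be any subtlety arising from the formal nature of $\formalgroup$, one can reduce to this case by establishing the base change étale-locally on $\filstack$ and then invoking the explicit description of $Def_{\filstack}(\formalgroup)^{\vee}$ provided by filtered Cartier duality.
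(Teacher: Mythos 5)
Your construction coincides with the paper's: you replace $B\formalgroup^{\vee}$ by $B\,Def_{\filstack}(\formalgroup)^{\vee}$, form the relative mapping stack over $\filstack$, take global sections, and read off a filtered module via $\QCoh(\filstack)\simeq \on{Fil}_R$, with the colimit formula coming from base change along $1\colon \Spec k\to\filstack$. So the strategy is the intended one, and you correctly isolate the base-change assertion as the crux.

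The gap is in how you discharge that assertion. Base change of the mapping stack \emph{itself} along $1$ is formal (pullback to a slice topos preserves internal mapping objects once the source is pulled back); the genuinely nontrivial step is that $R\Gamma$, i.e.\ pushforward to $\filstack$, commutes with this pullback. Under the Rees dictionary $1^{*}$ is the underlying-object functor $\colim_{n\to-\infty}$, a filtered colimit, while the pushforward from a general stack is computed by a limit; their interchange is precisely the exhaustiveness you are trying to prove, so ``flat base change because $1$ is an open immersion'' is circular as stated, and ``affineness of the classifying stack fibers'' does not help: $BG$ for $G$ affine need not have finite cohomological dimension, and in any case the relevant finiteness is a property of the mapping stack, not of $B\,Def_{\filstack}(\formalgroup)^{\vee}$. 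The paper closes this by proving that $\Map_{/\filstack}(B\,Def_{\filstack}(\formalgroup)^{\vee}, X\times\filstack)$ is representable by a relative \emph{affine} derived scheme over $\filstack$, via the Artin--Lurie/To\"en--Vezzosi representability criterion (nilcompleteness, infinitesimal cohesiveness, existence of an obstruction theory, and identification of the truncation with $t_0X\times\filstack$ using that every classical map $BG\to t_0X$ is constant); once the structure morphism is affine (or at least of finite cohomological dimension), Proposition \ref{morebasechangeshiiiii} supplies the Beck--Chevalley equivalence $1^{*}\pi_{*}\simeq\pi'_{*}1'^{*}$ and hence the stated colimit formula. You need to supply this representability argument for your proof to go through.
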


\begin{rem}
Let $\formalgroup$ be a $1$-dimensional formal group. Then in this case, the associated graded of the filtration on $\on{HH}^{\formalgroup}(A)$ will be exactly the derived global sections on $\Map(B \widehat{\GG_a},\Spec(A))$, which is none other than the de Rham algebra $ \Sym(\mathbb{L}_{A|k}[1])$.
Thus, we see that $\on{HH}^{\formalgroup}(-)$ agrees with ordinary Hochshchild homology at the level of associated gradeds of the respective HKR filtrations. Any differences are thus detected via extensions.
\end{rem}

\subsection{A family of group schemes over the sphere}
We now shift our attention over to the topological context.  In \cite{ellipticII},  Lurie defines a notion of formal groups intrinsic to the setting of spectral algebraic geometry. We explore a weak notion of Cartier duality in this setup, between formal groups over an $E_{\infty}$-ring and affine group schemes, interpreted as group-like commutative monoids in the category of spectral schemes. Leveraging this notion of Cartier duality, we demonstrate the existence of a family of spectral group schemes for each height $n$. Since Cartier duality is compatible with base change, one rather easily sees that these spectral schemes provide lifts of various affine group schemes.  

In the following statement, $R^{\un}_{\formalgroup}$
denotes the \emph{spectral deformation ring} of the formal group $\formalgroup$, studied in \cite{ellipticII}. This   corepresents the formal moduli problem (in the setting of spectral algebraic geometry)  sending a complete (Noetherian) $E_{\infty}$ ring $A$ to the space of deformations of $\formalgroup$ to $A$ and is a spectral enhancement of the classical deformation rings of Lubin and Tate.

\begin{thm}
Let $\formalgroup$ be a formal group  of height $n$ over  $\Spec(k)$, for $k$ a finite field. Let $D(\formalgroup) := \formalgroup^\vee$ be its Cartier dual affine group scheme. Then there exists a functorial lift $\phi\colon D(\formalgroup^{\un}) \to \Spec(R^{\un}_{\formalgroup})$ giving  the following Cartesian square of affine spectral schemes:
$$
\xymatrix{
&D(\formalgroup) \ar[d]_{\phi'} \ar[r]^{p'} &  D(\formalgroup^{\un}) \ar[d]^{\phi}\\
 & \Spec(\mathbb{F}_p)   \ar[r]^{p}& \Spec(R^{\un}_{\formalgroup})\rlap{.}
}
$$
Moreover, $D(\formalgroup^{\un})$ will be a group-like commutative monoid object in the $\infty$-category of spectral stacks $\sStk_{R^{\un}_{\formalgroup}}$ over $R^{\un}_{\formalgroup}$. 
\end{thm}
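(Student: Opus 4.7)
My plan is to construct $\spectralift$ as the spectral Cartier dual of a canonical spectral lift of $\formalgroup$, and then to deduce the Cartesian square from compatibility of Cartier duality with base change. The starting input is Lurie's spectral deformation theory of formal groups (cf.\ \cite{ellipticII}), which produces, for any height $n$ formal group $\formalgroup$ over a perfect field $k$ of characteristic $p$, a universal deformation: a spectral formal group $\formalgroup^{un}$ over $\Spec R^{un}_{\formalgroup}$ together with an equivalence between the restriction of $\formalgroup^{un}$ along $p : \Spec \F_p \to \Spec R^{un}_{\formalgroup}$ (or along $\Spec k$, depending on normalization) and $\formalgroup$. Concretely, $R^{un}_{\formalgroup}$ is the Lubin--Tate spectrum associated with $(\formalgroup,k)$.

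Having fixed $\formalgroup^{un}$, I would then define
$$
\spectralift := (\formalgroup^{un})^\vee,
$$
applying the spectral version of Cartier duality introduced earlier in the paper. By construction, this duality sends abelian cogroup objects (the spectral formal group $\formalgroup^{un}$ being such) to group-like commutative monoid objects in the $\infty$-category of spectral stacks over $R^{un}_{\formalgroup}$; this gives the last assertion of the theorem essentially for free, since the monoid structure arises tautologically from the comultiplication on $\formalgroup^{un}$. The affineness of $\spectralift$ comes out of the duality as well: the Cartier dual of a formal group of finite height is an affine spectral scheme, with structure sheaf the continuous $R^{un}_{\formalgroup}$-linear dual of the completed structure sheaf of $\formalgroup^{un}$.

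To produce the Cartesian square, the key step is base change. I would argue that for any map $\phi : \Spec S \to \Spec R^{un}_{\formalgroup}$ one has a natural equivalence
$$
\phi^* \bigl( (\formalgroup^{un})^\vee \bigr) \simeq \bigl( \phi^* \formalgroup^{un} \bigr)^\vee
$$
of group-like commutative monoids over $\Spec S$. This is a formal consequence of the duality being defined levelwise (fiber-wise) as a linear dual of smooth coalgebras, together with the fact that the formation of completed tensor products commutes with base change for smooth coalgebras of finite rank at each filtration step. Applying this to $\phi = p$ and using the universal deformation property $p^* \formalgroup^{un} \simeq \formalgroup$ produces
$$
p^* \spectralift \simeq \formalgroup^\vee = \mathsf{Fix}_{\formalgroup},
$$
which is exactly the required pullback square.

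The main obstacle I anticipate is verifying that the spectral Cartier duality, as defined in the paper for formal groups over a base $E_\infty$-ring, actually commutes with arbitrary base change along maps $\Spec S \to \Spec R^{un}_{\formalgroup}$, rather than only along flat (or smooth) maps; the map $p : \Spec \F_p \to \Spec R^{un}_{\formalgroup}$ is very far from flat. Here one needs to exploit that the underlying formal group at each stage is smooth of finite height, so that its coordinate coalgebra is a filtered colimit of finite locally free pieces, for which linear duality is visibly base-change compatible. Once this is in place, the square and the monoid structure follow formally.
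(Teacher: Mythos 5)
Your proposal matches the paper's own argument essentially step for step: take the universal deformation $\formalgroup_{un}$ over $R^{un}_{\formalgroup}$, define $\spectralift$ as its (weak) Cartier dual via $\Spec$ of the underlying smooth bialgebra, and deduce the Cartesian square from compatibility of Cartier duality with base change (the paper's Proposition \ref{basechangeinspectralsetting}, which resolves the non-flatness worry exactly as you do, via smoothness of the coalgebra). The only quibble is terminological: $R^{un}_{\formalgroup}$ is the \emph{unoriented} spectral deformation ring, a spectral enhancement of the classical Lubin--Tate ring, not the Lubin--Tate (Morava $E$-theory) spectrum itself, which is the orientation classifier $R^{or}_{\formalgroup}$.
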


The spectral group scheme  $D(\formalgroup^{\un})$ of the theorem arises as the weak Cartier dual of the universal deformation of the formal group $\formalgroup$; this naturally lives over $R^{\un}_{\formalgroup}$. 

We remark that a key example to which the above theorem applies is the restriction to $\mathbb{F}_p$ of the subgroup scheme $\mathsf{Fix}$ of fixed points on the Witt vector scheme, in height $1$. 

\subsection{Liftings of $\formalgroup$-twisted Hochshild homology}
Finally, we study an $E_\infty$ (as opposed to simplicial commutative) variant of $\formalgroup$-Hochshild homology. For an $E_\infty$ $k$-algebra, this will be defined in an analogous manner to $\on{HH}^{\formalgroup}(A)$ (see Definition~\ref{E_inftyvariant}). We conjecture that for a simplicial commutative algebra $A$ with underlying $E_\infty$-algebra,  denoted by $\theta(A)$, this recovers the underlying $E_\infty$-algebra of the simplicial commutative algebra $\on{HH}^{\formalgroup}(A)$. In the case of the formal multiplicative group $ \widehat{\GG_m}$, we verify this to be true, so that one recovers Hochschild homology. 

These theories now admit lifts to the associated spectral deformation rings. 

\begin{thm}
Let $\formalgroup$ be a height $n$ formal group over a finite field $k$ of characteristic $p$, and let $R^{\un}_{\formalgroup}$ be the associated spectral deformation $E_\infty$-ring.   Then there exists a functor  
$$
\on{THH}^{\formalgroup}\colon \on{CAlg}_{R^{\un}_{\formalgroup}} \lra \on{CAlg}_{R^{\un}_{\formalgroup}} 
$$
defined as 
$$
\on{THH}^{\formalgroup}(A):= R\Gamma( \Map_{\sStk_{R^{\un}_{\formalgroup}}}(B D(\formalgroup^{\un}), \Spec(A)   ), \OO ). 
$$
This lifts the $E_{\infty}$-variant of\, $\formalgroup$-Hochshild homology in the sense that if $A$ is a $k$-algebra for which there exists a $R^{\un}_{\formalgroup}$-algebra lift $\widetilde{A}$ with 
$$
\widetilde{A} \otimes_{R^{\un}_{\formalgroup}} k \simeq A,  
$$
then there is a canonical equivalence, cf.~Theorem~\ref{representability},  
$$
\on{THH}^{\formalgroup}(\widetilde{A}) \otimes_{R^{\un}_{\formalgroup}} k \simeq  \on{HH}_{E_{\infty}}^{\formalgroup}(A).
$$
\end{thm}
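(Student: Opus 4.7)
The plan is to proceed in three steps: construct the functor, identify the base change of the mapping stack at the level of geometry, and verify that derived global sections commute with this base change.

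For the construction, the preceding theorem shows that $\spectralift$ is a group-like commutative monoid in spectral stacks over $\Spec R^{un}_{\formalgroup}$, so its classifying stack $B\spectralift$ is a well-defined pointed spectral stack over $\Spec R^{un}_{\formalgroup}$. For $A \in \on{CAlg}_{R^{un}_{\formalgroup}}$, I would form the internal mapping stack $\Map(B\spectralift, \Spec A)$ in spectral stacks over $R^{un}_{\formalgroup}$; its derived global sections canonically carry an $E_\infty$-ring structure, yielding the desired functor.

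For the base-change identification, the Cartesian square from the preceding theorem gives an equivalence
$$
\spectralift \times_{\Spec R^{un}_{\formalgroup}} \Spec k \simeq \formalgroup^\vee
$$
of group-like $E_\infty$-monoids. Since the bar construction is the geometric realization of an explicit simplicial diagram of fiber products and therefore commutes with pullback, delooping yields $B\spectralift \times_{\Spec R^{un}_{\formalgroup}} \Spec k \simeq B\formalgroup^\vee$. Combined with the standard base-change formula for mapping stacks and the hypothesis $\widetilde{A} \otimes_{R^{un}_{\formalgroup}} k \simeq A$, this produces an equivalence
$$
\Map(B\spectralift, \Spec \widetilde{A}) \times_{\Spec R^{un}_{\formalgroup}} \Spec k \simeq \Map(B\formalgroup^\vee, \Spec A).
$$

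The third step is where the principal obstacle lies: I must show that derived global sections commute with the highly non-flat base change along $R^{un}_{\formalgroup} \to k$. My plan is to exhibit $\Map(B\spectralift, \Spec \widetilde{A})$ as an affine spectral stack by means of a cobar/Amitsur-style description associated to the Hopf algebra $\OO(\spectralift)$ acting on $\widetilde{A}$, so that the derived global sections are computed as the totalization of a cosimplicial $E_\infty$-algebra whose terms are relative tensor products over $R^{un}_{\formalgroup}$. Since $\spectralift$ is constructed as a universal deformation whose base change to $k$ is precisely $\formalgroup^\vee$, each term of this cobar complex is manifestly compatible with $-\otimes_{R^{un}_{\formalgroup}} k$; a convergence argument, exploiting the finiteness inherent in the height $n$ hypothesis on $\formalgroup$, will then allow the base change to pass through the totalization. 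Matching the resulting $E_\infty$-algebra with the construction defining $\on{HH}^{\formalgroup}(A)$, and invoking Theorem \ref{representability} to identify the two, completes the proof.
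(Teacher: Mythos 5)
Your first two steps coincide with the paper's argument: the functor is constructed exactly as you describe, and the geometric identification $\Map(B\spectralift,\Spec\widetilde{A})\times_{\Spec R^{un}_{\formalgroup}}\Spec k\simeq \Map(B\formalgroup^\vee,\Spec A)$ is obtained in the paper from the compatibility of pullback of $\infty$-topoi with internal mapping objects and with the bar construction. The divergence, and the gap, is in your third step. The paper does not argue via a cobar totalization; it proves (Theorem \ref{representability}) that $\Map(B\spectralift,\Spec \widetilde A)$ is representable by a spectral \emph{scheme}, by verifying the hypotheses of the Artin--Lurie representability theorem (existence of a cotangent complex via finite Tor-amplitude of the source, nilcompleteness, infinitesimal cohesion, and identification of the classical truncation with $t_0\Spec\widetilde A$), and then invokes the Beck--Chevalley base-change statement of Proposition \ref{morebasechangeshiiiii}, which applies to derived schemes, to push the pullback square through $R\Gamma(-,\OO)$.

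Your proposed substitute for this step does not work as stated. First, $\Map(B\spectralift,\Spec\widetilde A)$ is not presented as an affine stack with a cobar description for free: writing $B\spectralift$ as a geometric realization only exhibits the mapping stack as a totalization $\lim_{\Delta} \Map(\spectralift^{\times\bullet},\Spec\widetilde A)$ of stacks, and neither the affineness of the terms nor the commutation of $R\Gamma(-,\OO)$ with this limit is automatic. Second, and more seriously, the ``finiteness inherent in the height $n$ hypothesis'' on which your convergence argument relies is not available: $\spectralift$ is an affine but not finite group scheme --- already in height one its special fibre is $\mathsf{Fix}\subset\W_p$, whose coordinate Hopf algebra is the infinite divided-power coalgebra $\bigoplus_{n\geq 0}\Gamma^n(k)$ --- so the cosimplicial terms are not perfect modules and there is no a priori reason for a totalization to commute with the highly non-flat base change $R^{un}_{\formalgroup}\to k$. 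That commutation is precisely the content of the theorem, so deferring it to an unspecified convergence argument leaves the essential point unproved. If you want to keep a hands-on approach, the usable finiteness is cohomological rather than coalgebraic: it would suffice to show the mapping stack has finite cohomological dimension over $\Spec R^{un}_{\formalgroup}$, which is the second alternative in Proposition \ref{morebasechangeshiiiii}; the paper instead takes the representability route.
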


\begin{rem}
When $\formalgroup = \widehat{\GG_m}$, we show in Theorem~\ref{Thm:9.9} that $\on{THH}^{\widehat{\GG_m}}$ recovers the usual $\on{THH}$. 
\end{rem}

We tie the various threads of this work together in the speculative final section where we discuss the question of lifting the filtration on $\on{HH}^{\formalgroup}(-)$, defined in Section~\ref{additionstostory} as a consequence of the degeneration of $\formalgroup$ to $\filstack$, to a filtration on the topological lift $\on{THH}^{\formalgroup}(-)$. To this end, we conclude with a negative result in the case $\formalgroup= \widehat{\GG_m}$ (\textit{cf}.\ Proposition~\ref{negativeresult}) about lifting the filtered formal group  $\Def_{\filstack}(\widehat{\GG_m})$ to the sphere spectrum. 

\subsection{Future work}
We work over a ring of integers $\OO_K$ in a local field extension $K \supset \mathbb{Q}_p$. In this setting, one obtains a formal group, known as the \emph{Lubin--Tate formal group}, which is  canonically associated to a choice of uniformizer $\pi \in \OO_K$. In future work, we investigate analogs of the construction of $\mathbb{H}$ in \cite{moulinos2019universal}, which will be related by Cartier duality to this Lubin--Tate formal group. By the results of this paper, these filtered group schemes will have a canonical degeneration arising from the deformation to the normal cone construction of the Cartier dual formal groups. 

In another vein, we expect the study of these spectral lifts $\on{THH}^{\formalgroup}(-)$ to be an interesting direction. For example, there is the question of filtrations, and to what extent they lift to  $\on{THH}^{\formalgroup}(-)$. One could try to base change this along the map to the orientation classifier 
$$
R^{\un}_{\formalgroup} \lra R^{\on{or}}_{\formalgroup};
$$
\textit{cf}.\ \cite{ellipticII}. Roughly, this is a complex orientable $E_\infty$-ring with the universal property  that it classifies oriented deformations of the spectral formal group $\formalgroup^{\un}$; these are oriented in that they coincide with the formal group corresponding to a complex orientation on the underlying $E_\infty$-algebra of coefficients. For example, one obtains $p$-complete $K$-theory in height $1$. It is conceivable questions about filtrations and the like would be more tractable over this ring. 

\subsubsection*{Outline}
We begin in Section~\ref{sec2} with a short overview of the perspective on formal groups which we adopt. In Section~\ref{sec3}, we describe some preliminaries from derived algebraic geometry. In Section~\ref{sec4}, we construct the deformation to the normal cone and apply it to the case of the unit section of a formal group. In Section~\ref{sec5}, we apply this construction to the formal multiplicative group $\widehat{\GG_m}$ and relate the resulting degeneration of formal groups to constructions in \cite{moulinos2019universal}. In Section~\ref{deformationofGm}, we study resulting filtrations on the associated $\formalgroup$-Hochshild homologies. We begin Section~\ref{additionstostory} with a brief overview of the ideas which we borrow from \cite{ellipticII} in the context of formal groups spectral algebraic geometry, and we describe a family of spectral group schemes that arise in this setting that correspond to height $n$ formal groups over characteristic $p$ finite fields. In Section~\ref{spectralll}, we study lifts $\on{THH}^{\formalgroup}(-)$ of $\formalgroup$-Hochschild homology to the sphere, with a key input the group schemes of the previous section. Finally, we end with a short speculative discussion in Section~\ref{sec9} about potential filtrations on $\on{THH}^{\formalgroup}(-)$. 

\subsubsection*{Conventions}
We often work over the $p$-local integers $\Z_{(p)}$, and so we typically use $k$ to denote a fixed commutative $\Z_{(p)}$-algebra. If we use the notation $R$ for a ring or ring spectrum, then we are not necessarily working $p$-locally. In another vein, we work freely in the setting of $\infty$-categories and higher algebra from \cite{luriehigher}. We would also like to point out that our use of the notation $\Spec(-)$ depends on the setting; in particular, when we work with spectral schemes, $\Spec(A)$ denotes the spectral scheme corresponding to the $E_\infty$-algebra $A$. We will always be working in the commutative setting, so we implicitly assume all relevant algebras, coalgebras, formal groups, \textit{etc}.\ are (co)commutative.  Finally, for a fixed commutative ring $R$, we use the notation $\on{CAlg}_R$ to denote the $\infty$-category of all $E_\infty$-algebras over $R$, and the notation $\on{CAlg}^{\heartsuit}_{R}$ to denote the category of discrete commutative $R$-algebras. 

\subsection*{Acknowledgments}
I would like to thank Marco Robalo and Bertrand To\"{e}n for their collaboration in \cite{moulinos2019universal}, which led to many of the ideas presented in this work. 
I would also like to thank Bertrand To\"{e}n for various helpful conversations and ideas which have made their way into this paper. 

\section{Basic notions from derived algebraic geometry}\label{sec2}
In this section, we review some of the relevant concepts that we will use from the setting of derived algebraic geometry. We recall that there are two variants, one whose affine objects are connective $E_{\infty}$-rings, and one whose affine objects are simplicial commutative rings.  We review parallel constructions from both simultaneously, as we will switch between both settings.  

Fix a base commutative ring $R$, and let $\EuScript{C} = \{ \on{CAlg}^{\cn}_R, \on{sCAlg}_R \}$ denote either of the $\infty$-category of connective $R$-algebras or the $\infty$-category of simplicial commutative algebras. Recall that the latter can be characterized as the completion via sifted colimits of the category of (discrete) free $R$-algebras. Over a commutative ring $R$, there exists a functor
$$
\theta\colon \on{sCAlg}_R \lra \on{CAlg}^{\cn} 
$$
which takes the underlying connective $E_\infty$-algebra of a simplicial commutative algebra. This preserves limits and colimits so is in fact monadic and comonadic. 

In any case, one  may define a derived stack via its functor of points, as an object of the $\infty$-category 
$\on{Fun}(\EuScript{C}, \mathcal{S})$ satisfying hyperdescent with respect to a suitable topology on $\EuScript{C}^{\op}$, \textit{e.g.}, the \'{e}tale topology. From here on, we distinguish the context we  are working in by letting $\dStk_{R}$ denote the $\infty$-category of derived stacks and letting $\sStk_R$ denote the $\infty$-category of ``spectral stacks''. 

In either case, one obtains an $\infty$-topos, which is Cartesian closed, so that it makes sense to talk about internal mapping objects:  given any two $X,Y \in \on{Fun}(\EuScript{C}, \mathcal{S})$, one forms the mapping stack $\Map_{\EuScript{C}}(X, Y)$.  In various cases of interest, if the source and/or target is suitably representable by  a derived scheme or a derived Artin stack, then this is the case for $\Map_{\EuScript{C}}(X, Y)$ as well. 

There is a certain type of base-change result that we will use; \textit{cf}. \cite[Proposition A.1.5]{halpern2014mapping} and \cite[Proposition 9.1.5.7]{lurie2016spectral}.

\begin{prop}\label{Prop:2.1}
Let $f\colon \EuScript{X} \to \Spec(R)$ be a geometric stack over $\Spec(R)$ $($here $R$ is discrete$)$.   Assume  that one of the two conditions holds:
\begin{itemize}
    \item  $\EuScript{X}$ is a derived scheme. 
    \item  The morphism $f$ is of finite cohomological dimension over $\Spec(R)$, so that  the global sections functor sends $\QCoh(\EuScript{X})_{\geq 0}$ to  $({\Mod_R})_{\geq -n}$ for some positive integer $n$.
 
\end{itemize}
Then,  for $g\colon \Spec(R') \to \Spec(R)$, the diagram of stable $\infty$-categories 
$$
\xymatrix{
&\Mod_R  \ar[d]^{g^*} \ar[r]^{f^*} &  \QCoh(\EuScript{X}) \ar[d]^{g'^*}\\
 & \Mod_{R'}  \ar[r]^{f'^*} & \QCoh(\EuScript{X}_{R'})
}
$$
is right adjointable, and so, the Beck--Chevalley natural transformation  of functors  $ g^* f_*  \simeq   f'_*g'^{*}\colon \QCoh(\EuScript{X}) \to \Mod_{R'}$ is an equivalence.   
\end{prop}

\subsection{Formal algebraic geometry and derived formal descent}
In this paper, we will often find ourselves in the setting of formal algebraic geometry and formal schemes. Hence we recall some basic notions in this setting. We end this subsection with a notion of formal descent which is intrinsic to the derived setting. This phenomenon will be exploited in Section~\ref{deformationsection}. 

An (underived) \emph{formal affine scheme} corresponds to the following piece of data.

\begin{defn}
We define an adic $R$-algebra to be an $R$-algebra $A$ together with an ideal $I \subset A$ defining a topology on $A$.  We let $\on{CAlg}_{R}^{\heartsuit, \ad}$ denote the category of adic $R$-algebras. 
\end{defn}

\begin{const}
  Let $A$ be an adic commutative ring having a finitely generated ideal of definition $I \subseteq  A$. Then there exists a tower $\cdots \to A_3 \to A_2 \to A_1$ with the properties that 
\begin{enumerate}
    \item each of the maps $A_{i+1} \to A_i$ is a surjection with nilpotent kernel; 
    \item the canonical map $\colim \Map_{\on{CAlg}_R^{\heartsuit}}(A_n, B) \to \Map_{\on{CAlg}_R^{\heartsuit}}(A,B)$ induces an equivalence of the left-hand side with the summand of  $\Map_{\on{CAlg}_R^{\heartsuit}}(A,B)$ consisting of maps $\phi\colon A \to B $ annihilating some power of the ideal $I$; 
    \item each of the rings $A_i$ is finitely projective when regarded as an $A$-module. 
\end{enumerate}
One now defines $\on{Spf}(A)$ to be the filtered colimit 
$$
\colim_{i} \Spec(A_i)
$$
in the category of locally ringed spaces. In fact, $\on{Spf}(A)$  may be obtained as the left Kan extension of the $\Spec(-)$ functor along the inclusion $\on{CAlg}_{R}^{\heartsuit} \to \on{CAlg}_{R}^{\heartsuit, \ad}$. 
\end{const}

\begin{defn} 
A formal scheme  over $R$ is a functor 
$$
X\colon \on{CAlg}_R^{\heartsuit} \lra \on{Set}
$$
which is Zariski locally of the above form.  A (commutative) formal group is an abelian group object in the category of formal schemes. By Remark~\ref{abelianobjects1cat}, this consists of the data of a formal scheme $\formalgroup$ which takes values in groups, 
which commutes with direct sums. 
\end{defn}

There is a rather surprising descent statement one can make in the setting of  derived algebraic geometry. For this we first recall the notion of formal completion. 

\begin{defn}
Let $f\colon X \to Y$ be a closed immersion of locally Noetherian schemes. We define the formal completion to be the stack $\widehat{Y}_{X}$ whose functor of points is given by 
$$
\widehat{Y}_{X}(R)= Y(R) \times_{Y(R_{\red})} X(R_{\red}),  
$$
where $R_{\red}$ denotes the reduced ring $(\pi_0 R)_{\red}$.  
\end{defn}

Although defined in this way as a stack, this is actually representable by an object in the category of formal schemes, commonly referred to as the formal completion of $Y$ along $X$. 

We form the nerve $N(f)_\bullet$ of the map $f\colon X \to Y$, which we recall is a simplicial object that in degree $n$  is the $(n+1)$-fold product
$$
N(f)_n= X\times_Y X \times \cdots \times_Y X. 
$$
The augmentation map of this simplicial object naturally factors through the formal completion (by the universal property the formal completion satisfies). We borrow the following key  proposition from \cite{toen2014derived}. 

\begin{thm}\label{formaldescent}
The augmentation morphism $N(f)_\bullet \to \widehat{Y}_X$ displays $\widehat{Y}_X$ as the colimit of the diagram $N(f)_\bullet$ in the category of derived schemes. This gives an equivalence 
$$
\Map_{\dStk}(\widehat{Y}_X, Z) \simeq \lim_{n \in \Delta} \Map_{\on{dSch}}(N(f)_n, Z)
$$
for any derived scheme.  
\end{thm}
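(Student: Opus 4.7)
The plan is to verify the universal property directly by comparing mapping spaces into an arbitrary derived scheme $Z$. Both sides of the claimed equivalence are compatible with Zariski descent in $Z$, so I would first reduce to the case where $Z = \Spec C$ is affine. In parallel, the formal completion $\widehat{Y}_X$ and the nerve $N(f)_\bullet$ are both Zariski-local on $Y$, so one further reduces to the affine situation $Y = \Spec B$, $X = \Spec B/I$ for a finitely generated ideal $I \subseteq B$ (the locally Noetherian hypothesis ensuring this is well-behaved).

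In this reduced setup, by definition of the functor of points, $\Map(\Spec C, \widehat{Y}_X)$ consists of those maps $B \to C$ whose composition $I \to \pi_0 C$ lands in the nilradical. On the other hand, the right-hand side can be rewritten as
$$
\lim_{[n] \in \Delta}\Map_{\on{CAlg}}(C, \OO(N(f)_n)) \simeq \Map_{\on{CAlg}}\bigl(C, \Tot_\Delta \OO(N(f)_\bullet)\bigr),
$$
where $\OO(N(f)_n) \simeq (B/I)^{\otimes_B^{n+1}}$ is computed with \emph{derived} tensor products over $B$. (Here one needs to be slightly careful: the totalization lives in connective objects after applying $\tau_{\geq 0}$, but for mapping out of a connective $C$ this is harmless.)

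The heart of the proof is therefore the following derived completion identification:
$$
\Tot_\Delta\bigl((B/I)^{\otimes_B^{\bullet+1}}\bigr) \;\simeq\; \widehat{B}_I,
$$
the right-hand side being the derived $I$-adic completion. The strategy would be to first establish this when $I = (f_1, \ldots, f_r)$ is generated by a regular sequence, via an explicit Koszul computation: the cobar object resolves the Čech nerve of $\on{Spec}(B/I) \to \on{Spec}(B)$, whose limit in algebras is the $I$-adic completion since each $(f_i)$-adic piece can be treated separately and the contributions assemble via finite products. The general locally Noetherian case then follows by presenting $I$ locally as an image of a regular ideal, invoking derived Nakayama to control the difference, and passing to filtered colimits using that derived completion commutes with finite intersections of ideals.

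The main obstacle will be Step three: making precise the identification of the cosimplicial totalization with derived $I$-adic completion outside of the regular immersion case. This is essentially a repackaging of derived Adams/Čech descent for completion, and in practice one would cite Toën's derived deformation theory or Lurie's treatment of derived completion rather than reprove it. Once this is in hand, the final matching with the functor of points of $\widehat{Y}_X$ is formal: a map $C \to \widehat{B}_I$ is, by the universal property of completion, the same as a map $B \to C$ carrying $I$ into $\bigcap_n I^n \cdot \pi_0 C$, which under the Noetherian hypothesis coincides with sending $I$ into the nilradical of $\pi_0 C$ when $C$ is tested on all connective derived rings.
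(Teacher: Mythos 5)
The paper does not actually prove this statement: it is quoted from To\"en's survey, with a remark reducing it to Carlsson's theorem identifying the totalization of the cobar construction with derived completion. So the relevant comparison is with that standard argument, and your proposal does follow its outline: reduce to the affine case and identify $\Tot_\Delta\bigl((B/I)^{\otimes_B^{\bullet+1}}\bigr)$ with the derived $I$-adic completion $\widehat{B}_I$, first for regular ideals via a Koszul computation and then in general. That identification is indeed the analytic heart of the matter, and citing Carlsson or Lurie for it is exactly what the paper itself does.

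There is, however, a genuine logical gap in how you connect the two sides, caused by a variance confusion. The theorem asserts that $\widehat{Y}_X$ is a \emph{colimit} of $N(f)_\bullet$, i.e. it concerns $\Map_{dStk}(\widehat{Y}_X, Z)$ --- maps \emph{out of} the formal completion. Your second paragraph instead computes $\Map(\Spec C, \widehat{Y}_X)$, the functor of points of $\widehat{Y}_X$ (maps \emph{into} it), which is what the definition via $Y(R) \times_{Y(R_{red})} X(R_{red})$ hands you directly but is not the quantity appearing in the theorem. The final paragraph then tries to bridge the two with the assertion that ``a map $C \to \widehat{B}_I$ is, by the universal property of completion, the same as a map $B \to C$ carrying $I$ into $\bigcap_n I^n \cdot \pi_0 C$''; this is not a correct statement --- there is no universal property describing ring maps \emph{into} a completion in terms of ring maps in the opposite direction. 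The missing ingredient is the representability statement recorded in the paper just before the theorem: the stack $\widehat{Y}_X$, defined via reduced points, is equivalent to the ind-scheme $\colim_k \Spec(B/I^k)$. Only with that in hand does one get $\Map_{dStk}(\widehat{Y}_X, \Spec C) \simeq \lim_k \Map_{\on{CAlg}}(C, B/I^k) \simeq \Map_{\on{CAlg}}(C, \widehat{B}_I)$, which can then be matched against your totalization computation. Your functor-of-points calculation is a step toward proving that representability, but it is never completed, so as written the argument does not actually compute the left-hand side of the theorem. (A smaller point: the opening claim that ``both sides satisfy Zariski descent in $Z$'' is not literally true for mapping \emph{into} $Z$; the correct reduction uses that $N(f)_n$ and $\widehat{Y}_X$ all have underlying space $|X|$, so maps to $Z$ factor Zariski-locally on $X$ through affine opens of $Z$.)
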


\begin{rem}
At its core, this is a consequence of \cite[Theorem 4.4]{carlsson2008derived} on derived completions in stable homotopy, giving a model for the completion of an $A$-module spectrum along a map of ring spectra $f\colon A \to B $  to be the totalization of a certain cosimplicial diagram of spectra obtained via a certain co-Nerve construction. See Bhatt's work on completions and derive de Rham cohomology in  \cite{bhatt2012completions} for related results. 
\end{rem}

\begin{warn}\label{derivedschemesnotstacks}
We emphasize that this augmentation $N(f)_\bullet \to \widehat{Y}_X$ satisfies a universal property with respect to mapping to \emph{derived schemes}, as opposed to \emph{derived stacks}, as indicated by the equivalence in the statement of Theorem~\ref{formaldescent}. 
\end{warn}

\subsection{Tangent and normal bundles}
Let $X$ be a derived stack and $E \in \Perf(X)$ a perfect complex of Tor-amplitude concentrated in degrees $[0,n]$. Then  we have the following notion; \textit{cf}. \cite[Section 3]{toen2014derived}. 

\begin{defn}
We define the linear stack associated to $E$ to be the space-valued functor with source affine derived schemes over $X$ 
$$ 
\mathbb{V}(E)\colon \on{dAff}^{\op}_{/X} \lra \mathcal{S}
$$ 
defined by
$$
(u\colon \Spec(A) \lra X) \longmapsto \Map_{\Mod_A}(u^*(E), A ). 
$$
We note that this becomes a derived stack  over $X$ as it satisfies \'{e}tale descent. 
\end{defn}

\begin{ex}\label{kgaexample}
Let $\OO_{X}[n] \in \Perf(X)$ be a shift of the structure sheaf. Then $\mathbb{V}(\OO_{X}[n])$ is simply $K(\GG_{a,X}, -n)$. For a general perfect complex $E$, this $\mathbb{V}(E)$ may be obtained by taking various twisted forms and finite limits of these $K(\GG_{a,X}, -n)$. 
\end{ex}

\begin{defn}
Let $f\colon X \to Y$ be a map of derived stacks, which we assume to be \emph{quasi-smooth}. This means that it is locally of finite presentation and the relative cotangent complex $L_{X|Y}$ has Tor-amplitude $(- \infty, 1]$. We define the normal bundle stack to be 
$$
\mathbb{T}_{X|Y}[1] := \mathbb{V}(L_{X|Y}[-1]).
$$
This will be a derived stack over $X$; if $f$ is a closed immersion of classical schemes, then this will be representable by the ordinary normal bundle. Furthermore, this can be expressed as the classifying stack of the tangent bundle stack $\mathbb{T}_{X|Y}[1]$. 
\end{defn}

\begin{ex}
Let $i \colon \Spec(R) \to \formalgroup$ be the unit section of a formal group. This is an l.c.i. closed immersion; hence the cotangent complex is concentrated in (homological) degree~$1$. Via the equivalence 
$$
\mathbb{V}(E) \simeq \Spec(\Sym(E))
$$
for $E$ of finite  nonnegative (homological) Tor-amplitude (\textit{cf}. \cite[Section 3]{toen2014derived}), 
we see that the normal bundle $\mathbb{T}_{\Spec(R) | \formalgroup}[1]$ is a derived scheme and is in fact equivalent to 
$$
\mathbb{T}_{\formalgroup}:= \mathbb{V}(\on{Lie}(\formalgroup)) ,
$$ 
the linear stack associated to the Lie algebra of $\formalgroup$. This may be checked at the level of functors of points. If moreover we are in the $1$-dimensional case, and if there is an orientation in the sense that there is a local equivalence of $\on{Lie}(\formalgroup) \simeq R$, then 

$$
\mathbb{T}_{\formalgroup} \simeq K(\GG_a,0)  = \GG_a,
$$ 
the additive group over $\Spec(R)$, at least after taking  formal completion.\end{ex}

\section{Formal groups and Cartier duality} \label{ogdiscussion}\label{sec3}
In this section, we review some ideas pertaining to the theory of (commutative) formal groups which will be used throughout this paper. In particular, we carefully review the notion of Cartier duality as introduced by Cartier in \cite{cartier1962groupes} and also described in \cite[Section 37]{hazewinkel1978formal}.  
We remark that one of the key contributions of this paper is to introduce filtered analogs of these results. 

There are several perspectives one may adopt when studying formal groups. In general, one may think of them as abelian group objects in the category of formal schemes or representable formal moduli problems. 
In this paper, we will be focusing on the somewhat restricted setting of formal groups which arise from certain types of Hopf algebras. In this setting, one has a particularly well-behaved duality theory which we will exploit. Furthermore, it is this structure which has been generalized by Lurie in \cite{ellipticII} to the setting of spectral algebraic geometry.

\subsection{Abelian group objects}
We start off with the notions of abelian group and commutative monoid objects in an arbitrary $\infty$-category and review their distinction. 

\begin{notation}
  For each $n \geq 0$,  let $\langle n \rangle$ denote the pointed set $\{1,\ldots,n, *\}$. Now let  $\on{Fin}_*$ denote the category whose objects are the sets $\langle n \rangle$ and whose morphisms are pointed maps. Finally, for $1 \leq i \leq n$, let $\rho^i\colon \langle n \rangle \to \langle 1 \rangle $ denote the morphism such that $\rho^i(j)=1$ if $i=j$ and $\rho^i(j)= *$ otherwise. 
\end{notation}

\begin{defn}
Let $\mathcal{C}$ be an $\infty$-category which admits finite limits. A commutative monoid object is a functor $M\colon \on{Fin}_* \to \mathcal{C}$ with the property that for each $n$, the natural maps $M(\rho(\langle n \rangle) \to M(\rho \langle 1 \rangle) $ induce equivalences $M(\rho \langle n \rangle) \simeq M(\langle 1 \rangle)^{n}$ in $\mathcal{C}$.

In addition, a commutative monoid  $M$ is group-like if for every object $C\in \mathcal{C}$, the commutative monoid $\pi_0 \Map(C,M)$ is an abelian group. 
\end{defn}
 
We now define the somewhat contrasting notion of abelian group object. This will be part of the relevant structure on a formal group in the spectral setting. 

\begin{defn}
Let $\mathcal{C}$ be an $\infty$-category admitting finite limits. Then the $\infty$-category $\on{Ab}(\mathcal{C})$ of abelian objects of $\mathcal{C}$ is defined to be  
$$
\on{Fun}^\times(\on{Lat}^{\op}, \mathcal{C}),
$$
the category of product-preserving functors from the category $\on{Lat}$ of finite rank, free abelian groups into $\mathcal{C}$.
\end{defn}

\begin{rem} \label{abelianobjects1cat}
Let $\mathcal{C}$ be a small (discrete) category with finite limits.  Then an abelian group object $A$ is such that its representable presheaf $h_A$ takes values in abelian groups. Furthermore, in this setting, the two notions of abelian groups and  group-like commutative monoid objects coincide; \textit{cf}. \cite[Warning 1.3.10]{ellipticII}
\end{rem}

\subsection{Formal groups and Cartier duality over a field} \label{cartierdualityfield}
Before setting the stage for the various manifestations of Cartier duality to appear,
we say a few things about Hopf algebras, as they are central to this work. We begin with a brief discussion of what happens over a field $k$. 

\begin{defn}
For us, a (commmutative, cocommutative) Hopf algebra $H$ over $k$ is an abelian group object in the category of (discrete) coalgebras over $k$.
\end{defn}

Unpacking the definition, and using the fact the category of coalgebras is equipped with a Cartesian monoidal structure (it is the opposite category of a category of commutative algebra objects), we see that this is just another way of identifying bialgebra objects $H$ with an antipode map 
$
i\colon H \lra H
$;
this arises from the ``abelian group structure'' on the underlying coalgebra.

\begin{const}
Let $H$ be a Hopf algebra. Then one may define a functor
$$
\on{coSpec}(H)\colon \on{CAlg}_k^{\heartsuit} \lra \on{Ab} , \quad R \longmapsto \on{Gplike}(H \otimes_k R)= \{x | \Delta(x) = x \otimes x\},
$$
assigning to a commutative $k$-algebra $R$ the set of group-like elements of $R \otimes_k H$. 
The Hopf algebra structure on $H$  endows these sets with an abelian group structure, which is what makes the above an abelian group object-valued functor. In fact, this will be a formal scheme, and there will be an equivalence 
$$
\on{coSpec} (H) \simeq \on{Spf}\left(H^\vee\right), 
$$
where $H^{\vee}$, the linear dual of $H$, is an $R$-algebra, complete with respect to an $I$-adic topology induced by an ideal of definition $I \subset R$. Hence we arrive at our first interpretation of formal groups; these correspond precisely to Hopf algebras.
\end{const}

\begin{const} \label{indproduality}
Let us unpack the previous construction from an algebraic vantage point. Over a field $k$, there is an equivalence 
$$
\on{cCAlg}_k \simeq \on{Ind}\left(\on{cCAlg}^{\fd}_k\right), 
$$
where $\on{cCAlg}^{\fd}_k$ denotes the category of coalgebras whose underlying vector space is finite-dimensional. By standard duality, there is an equivalence
$$
 \on{Ind}\left(\on{cCAlg}^{\fd}_k\right) \simeq \on{Pro}\left({\on{CAlg}^{\fd}_k}\right), 
$$
where we remark that $\on{cCAlg}^{\fd}_k \simeq ({\on{CAlg}^{\fd}_k})^{\op}$. This may then be promoted to a duality between abelian group/cogroup objects
\begin{equation} \label{eq:3.1}
\mathsf{Hopf}_k := \on{Ab}\left(\on{cCAlg}_k\right) \simeq \on{coAb}\left(\on{Pro}\left({\on{CAlg}^{\fd}_k}\right)\right). 
\end{equation}
\end{const}

\begin{rem}
The interchange of display (\ref{eq:3.1}) is precisely the underlying idea of Cartier duality of formal groups and affine group schemes. 
Recall that Hopf algebras correspond contravariantly via the $\Spec(-)$ functor to affine group schemes. Hence one has 

$$
\on{AffGp}_{k}^{\op} \simeq \mathsf{Hopf}_k \simeq \on{FG}_k,
$$
where the left-hand side denotes the category of affine group schemes over $k$.  The functor on the right is given by the functor $\on{coSpec}(-)$ described above.
We remark that in this setting, the category of Hopf algebras over the field $k$ is actually abelian; hence the categories of formal groups and affine group schemes are themselves abelian.  
\end{rem}

\subsection{Formal groups and Cartier duality over a discrete commutative ring}
The key results in this section are Propositions~\ref{equivalenceofcategoriesdiscrete} and~\ref{fullyfaithfultobecited} and 
Construction~\ref{keyconstructionofthissection}, which together imply a duality theory between formal groups and affine group schemes (over a discrete commutative ring). This is what we refer to as \emph{Cartier duality}. In addition, by Proposition~\ref{Prop:3.19} this duality is 
by taking group maps to  $\GG_m$ and $\widehat{\GG_m}$, respectively. In Section~\ref{sec8.1}, we will describe a generalization of these ideas over a base $E_\infty$-ring~R. 

Over a general commutative ring $R$, the duality theory between formal groups and affine group schemes is not quite as simple to describe. In practice, one restricts to certain subcategories on both sides, which then fit under the Ind-Pro duality framework of Construction~\ref{indproduality}. This will be achieved by imposing a condition on the underlying coalgebra of the Hopf algebras at hand. 

\begin{rem}
We study coalgebras following the conventions of \cite[Section 1.1]{ellipticII}. In particular, if $C$ is a coalgebra over $R$, we always require that the underlying $R$-module of $C$ is flat.  This is done as in \cite{ellipticII} to ensure that $C$ remains a coalgebra in the setting of higher algebra. Furthermore, we implicitly assume that all coalgebras appearing in this text are (co)commutative. 
\end{rem}

To an arbitrary coalgebra, one may functorially associate a presheaf on the category of affine schemes given by the cospectrum functor 
$$
\on{coSpec}\colon \on{cCAlg}_R \lra \on{Fun}(\on{CAlg}_R, \on{Set}).
$$

\begin{defn} \label{Def:3.11}
Let $C$ be a coalgebra. We define  $\on{coSpec}(C)$ to be the functor
$$
\on{coSpec}(C)\colon \on{CAlg}_R \lra \on{Set}
$$
defined by $R \mapsto \on{Gplike}(C \otimes_k R)= \{x | \Delta(x) = x \otimes x\}$. 
\end{defn}

The $\on{coSpec}(-)$ functor is fully faithful when restricted to a certain class of coalgebras. We borrow the following definition from \cite{ellipticII}. See also \cite{strickland1999formal} for a related notion of \emph{coalgebra with good basis}.

\begin{defn}  \label{smoothcoalgoriginal}
Fix $R$ and let $C$ be a (cocommutative) coalgebra over $R$. We say $C$ is \emph{smooth} if its underlying $R$-module is flat and if it is isomorphic to the divided power coalgebra 
$$
\Gamma^*_R(M):= \bigoplus_{n \geq 0} \Gamma^n_R(M)
$$
for some projective, finitely generated $R$-module $M$. Here, $\Gamma^n_R(M)$ denotes the invariants for the action of the symmetric group $\Sigma_n$ on $M^{\otimes n}$. 
 \end{defn}

Given an arbitrary coalgebra $C$ over $R$, the linear dual $C^\vee =\Map(C, R)$ acquires a canonical $R$-algebra structure. In general, $C$ cannot be recovered from $C^\vee$. However, in the smooth case, the dual $C$ acquires the additional structure of a topology on $\pi_0$, giving it the structure of an adic $R$ algebra. This allows us to recover $C$, via the following proposition; \textit{cf}. \cite[Theorem 1.3.15]{ellipticII}. 

\begin{prop} \label{equivalenceofcategoriesdiscrete}
Let $C, D \in \on{cCAlg}^{\sm}_R$ be smooth coalgebras. Then $R$-linear duality induces a homotopy equi\-va\-lence
$$
\Map_{\on{cCAlg}_R}(C, D) \simeq \Map^{\on{cont}}_{\on{CAlg}_R}(C^\vee, D^\vee).
$$
\end{prop}

\begin{rem}
One can go further and characterize intrinsically all adic $R$-algebras that arise as duals of smooth coalgebras. These will be equivalent to $\widehat{\on{Sym}^*(M)}$, the completion along the augmentation ideal  $\on{Sym}^{\geq 1}(M)$ for some  projective $R$-module $M$ of finite type.  
\end{rem}

\begin{rem}
Fix  a smooth coalgebra $C$. There is always a canonical map of stacks $\on{coSpec}(C) \to \Spec(A)$, where $A= C^\vee$, but it is typically not an equivalence. The condition that $C$ is smooth guarantees precisely that there is an induced equivalence $\on{coSpec}(C) \to \on{Spf}(A) \subseteq \Spec(A)$, where $\on{Spf}(A)$ denotes the formal spectrum of the adic $R$-algebra $A$. In particular $\on{coSpec}(C)$ is a formal scheme in the sense of \cite[Chapter~8]{lurie2016spectral}. 
\end{rem}

\begin{prop}[Lurie] \label{fullyfaithfultobecited}
Let $R$ be an commutative ring. Then the construction $C \mapsto \on{cSpec}(C)$ induces a fully faithful embedding of $\infty$-categories
$$
\on{cCAlg}^{\sm}_R \lra \on{Fun}(\on{CAlg}^{\heartsuit}_R, \mathcal{S}). 
$$
Moreover, this commutes with finite products and base change. 
\end{prop}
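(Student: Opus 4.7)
The plan is to reduce the statement to the linear-duality equivalence already recorded in the excerpt. Recall from the remark preceding the proposition that for a smooth coalgebra $C$ over $R$, the canonical map
$$
\on{coSpec}(C) \to \on{Spf}(C^\vee)
$$
is an equivalence of presheaves on $\on{CAlg}^{0}_R$, with $C^\vee$ identified with $\widehat{\on{Sym}^*(M)}$ for a finite projective $R$-module $M$. So the first step is to rewrite the construction $C \mapsto \on{coSpec}(C)$ as the composition
$$
\on{cCAlg}^{sm}_R \xra{(-)^\vee} (\on{CAlg}^{ad}_R)^{op} \xra{\on{Spf}} \on{Fun}(\on{CAlg}^{0}_R, \mathcal{S}),
$$
so that full faithfulness is reduced to checking each factor.

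The first factor is addressed by the earlier proposition, which provides
$$
\Map_{\on{cCAlg}_R}(C, D) \simeq \Map^{\on{cont}}_{\on{CAlg}_R}(D^\vee, C^\vee),
$$
so $(-)^\vee$ is fully faithful from $\on{cCAlg}^{sm}_R$ into (continuous maps between) adic $R$-algebras of the stated form. For the second factor, I would use the functor-of-points description of $\on{Spf}$: a map $\on{Spf}(A) \to \on{Spf}(B)$ of presheaves is precisely, for each ordinary $R$-algebra $R'$, a compatible map of sets of continuous maps $A \to R'$ (where $R'$ has the discrete topology) into continuous maps $B \to R'$, and this is computed by a continuous $R$-algebra homomorphism $B \to A$. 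Using the tower presentation $A \simeq \lim A_i$ appearing in the construction of $\on{Spf}$, both sides reduce to $\colim_i \Map(B_i, A_j)$-type formulas, identifying $\Map(\on{Spf}(A), \on{Spf}(B))$ with $\Map^{\on{cont}}_{\on{CAlg}_R}(B, A)$. Composing the two steps yields the desired full faithfulness.

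For base change, it suffices to observe that the cospectrum is defined pointwise by the set of grouplike elements of $C \otimes_R R'$, so its restriction along a map $R \to R'$ agrees with $\on{coSpec}(C \otimes_R R')$ as a functor on $\on{CAlg}^{0}_{R'}$. For finite products, I would use that if $C, D$ are smooth coalgebras, then $C \otimes_R D$ is again smooth (the divided power construction is compatible with tensor products of projective modules), and that a grouplike element of $(C \otimes_R D) \otimes_R S$ is exactly a pair consisting of a grouplike of $C \otimes_R S$ and a grouplike of $D \otimes_R S$; this identifies $\on{coSpec}(C \otimes_R D)$ with $\on{coSpec}(C) \times \on{coSpec}(D)$.

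The main obstacle is the second factor: rigorously matching the mapping space $\Map(\on{Spf}(A), \on{Spf}(B))$ in the presheaf $\infty$-category with continuous algebra maps $B \to A$. One has to be careful that the tower defining $\on{Spf}(A)$ really does allow every map from $\on{Spf}(B)$ to factor through some $\Spec A_i$ when $B$ is of the form $\widehat{\on{Sym}^*(M)}$ — this uses the finiteness of the generating module $M$ to ensure that continuity is witnessed by finitely many nilpotent relations, so that one can pass between the colimit and limit descriptions without loss. Once this is set up, the rest is formal.
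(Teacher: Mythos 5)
Your proposal is correct and takes essentially the same route as the paper, whose entire proof is the one-line observation that a smooth coalgebra can be recovered from its adic dual algebra — precisely the duality-plus-$\on{Spf}$ factorization you spell out (note only that the duality equivalence is contravariant, $\Map_{\on{cCAlg}_R}(C,D)\simeq \Map^{\on{cont}}_{\on{CAlg}_R}(D^\vee, C^\vee)$, as you write it; the paper's transcription of Lurie's theorem has the arguments in the wrong order). Your elaboration of the $\on{Spf}$ step and of the base-change and finite-product claims supplies details the paper leaves implicit, but it is the same argument.
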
 

\begin{proof}
This essentially follows from the fact that a smooth coalgebra can be recovered from its adic algebra. 
\end{proof}

\begin{notation}
In the following, $\on{CAlg}^{\ad}_R$ denotes the category of (discrete) adic-$R$-algebras; these are discrete $R$-algebras $A$ with an ideal $I$ for which the topology on $A$ generated by $I$ is complete. 
\end{notation}

\begin{const} \label{keyconstructionofthissection}
As a consequence of the fact that the $\on{coSpec}(-)$ functor preserves finite products, this can be upgraded to a fully faithful embedding of abelian group objects in smooth coalgebras into formal groups 
$$
\on{Ab}(\on{cCAlg}) \lra \on{Ab}(f\on{Sch}).
$$
Unless  mentioned otherwise, we will focus on formal groups of this form. Hence we use the notation $\on{FG}_R$ to denote the image of the above embedding, and so the term \emph{Cartier duality} refers to the equivalence between this and (abelian group objects in) smooth coalgebras. We summarize the above discussion with the following statement. 
\end{const}

\begin{thm}[Cartier duality]
There exists an equivalence of categories between formal groups $\on{FG}_R$ and the category of affine group schemes whose underlying Hopf algebra of functions is smooth as a coalgebra.   
\end{thm}

We would like to interpret the above correspondence geometrically. 
Let $\on{AffGrp}^{b}_R$ be the subcategory of affine group schemes corresponding via the $\Spec(-)$ functor to the category $\mathsf{Hopf}_{R}^{\,\sm}$, which we use to denote the category of Hopf algebras whose underlying coalgebra is smooth.  Meanwhile, a cogroup object $\widehat{H}$ in the category of  adic algebras  corepresents a functor 
$$
F\colon \on{CAlg}_{R}^{\ad} \lra \Grp, \quad A \longmapsto \Hom_{\on{CAlg}_{R}^{\ad}}(\widehat{H}, A), 
$$
where the group structure arises from the cogroup structure on $H$. 
Essentially by definition, this is exactly the data of a formal group, so we may identify the category of formal groups with the category $\on{coAb}(\on{CAlg}_{R}^{\ad})$. 

We have identified the categories in question as those of affine group schemes and formal groups, respectively; one can further conclude that these dualities are representable by certain distinguished objects in these categories. 

\begin{prop}[\textit{cf.} \protect{\cite[Propositions 37.3.6 and 37.3.11]{hazewinkel1978formal}}] \label{Prop:3.19}
There exist natural bijections 
$$
\Hom_{\mathsf{Hopf}_{A}^{\,\sm}}(A[t, t^{-1}], C \otimes A) \cong  \Hom_{\on{CAlg}_{R}^{\ad}}(D(C), A), 
$$
$$
\Hom_{\on{CoAb}({\on{CAlg}_B^{\ad}})}\left(B[[T]], A \widehat{\otimes_R} B\right) \cong \Hom_{\on{CAlg}_R}\left(D^T(A), B\right).
$$
Here, for a coalgebra $C$,  $D(C)$ is the  linear dual, and for any topological algebra $A$, $D^T(A)= \Map_{cont}(A, R)$ is the  \emph{continuous dual}. 
\end{prop}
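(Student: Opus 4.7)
The plan is to establish both bijections by unpacking each Hom-set in terms of concrete algebraic data---grouplike elements for the first bijection, and primitive elements for the second---and then matching these descriptions via the $\on{coSpec}(-)$/continuous duality formalism developed above. Both bijections are corepresentability statements: $A[t, t^{-1}]$ (resp. $B[[T]]$) will play the role of the universal carrier of a grouplike (resp. primitive) element, and I will extract the bijection by tracking the image of the universal element.

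For the first bijection, I would observe that a Hopf algebra map $\phi \colon A[t, t^{-1}] \to C$ is uniquely determined by $\phi(t)$. Since $\Delta(t) = t \otimes t$, the image $\phi(t)$ is a grouplike element of $C$; invertibility is automatic in any Hopf algebra via the antipode, so the data reduces to a choice of grouplike element of $C$ viewed over $A$. On the right-hand side, Definition \ref{cospectrumshit} identifies $\Hom_{\on{CAlg}^{ad}}(D(C), A)$ with the $A$-points of $\on{coSpec}(C) = \on{Spf}(D(C))$, which by construction is the set of grouplike elements of $C \otimes_R A$. The two identifications are then matched by sending $\phi \mapsto \phi(t)$ and inverting via the universal grouplike.

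For the second bijection the approach is dual. A cogroup morphism $\psi \colon B[[T]] \to A$ is determined by $\psi(T) \in A$, and compatibility with the additive coproduct $\Delta(T) = T \otimes 1 + 1 \otimes T$ forces $\psi(T)$ to be a primitive element of $A$. On the other side, I would use the continuous duality $D^T$ to transport the cogroup structure on the adic $B$-algebra $A$ into an algebra (group-scheme) structure on $\Spec D^T(A)$, so that $\Hom_{\on{CAlg}}(D^T(A), B)$ records the $B$-points of the Cartier dual group scheme of $\on{Spf} A$. The content is that these $B$-points corepresent the primitive functor on adic $B$-cogroups, which matches the left-hand side via the universal primitive $T$. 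The main obstacle lies here: one must carefully check that $D^T$ genuinely interchanges multiplication and comultiplication in the expected way, handle the topology on $A$ so that continuous duality is available (invoking the smoothness/profinite type assumptions implicit in the setup), and confirm that $B$-algebra maps $D^T(A) \to B$ are exactly classified by primitives of $A$. The first bijection, by contrast, is essentially immediate from the $\on{coSpec}$ construction and Definition \ref{cospectrumshit}.
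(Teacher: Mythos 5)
Your treatment of the first bijection is fine and is the standard argument (the paper itself offers no proof here, deferring entirely to Hazewinkel): a Hopf algebra map out of $A[t,t^{-1}]$ is determined by the grouplike unit $\phi(t)$, invertibility is automatic from the antipode, and grouplike elements of $C\otimes_R A$ are exactly the $A$-points of $\on{Spf}(D(C))$ by Definition \ref{cospectrumshit}. The problem is in the second bijection, and it is not merely the technical verification you flag at the end --- it is the setup. The universal object $B[[T]]$ here is the coordinate ring of the \emph{formal multiplicative group}, not the additive one: its comultiplication is $\Delta(T) = T\otimes 1 + 1\otimes T + T\otimes T$, i.e.\ $1+T$ is grouplike. (The paper says as much immediately after the proposition: the two duality functors are represented by $\GG_m$ and $\widehat{\GG_m}$.) With your additive coproduct, cogroup maps $B[[T]]\to A$ would classify topologically nilpotent \emph{primitive} elements of $A$, and these do not biject with $\Hom_{\on{CAlg}}(D^T(A),B)$: primitives of $A$ correspond to the Lie algebra of the affine group scheme $\Spec D^T(A)$ (maps to $B[\epsilon]/\epsilon^2$ lifting the counit), not to its full set of $B$-points. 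Already for $A=\OO(\widehat{\GG_m})$ the two sides would have different sizes, so the bijection you are aiming at would simply be false as stated.

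The correct reduction runs parallel to the first bijection rather than dual to it. A continuous cogroup map $\psi\colon B[[T]]\to A$ is determined by the grouplike element $g=1+\psi(T)$ of $A$ (topologically nilpotent image of $T$, counit condition $\epsilon(g)=1$). On the other side, since $D^T(A)$ is a smooth coalgebra with $\bigl(D^T(A)\bigr)^\vee\cong A$, an $R$-linear map $f\colon D^T(A)\to B$ is an element $x\in A\,\widehat{\otimes}\,B$, and $f$ is a unital algebra map precisely when $\Delta_A(x)=x\otimes x$ and $\epsilon(x)=1$ --- that is, when $x$ is grouplike --- because the multiplication on $D^T(A)$ is by construction dual to $\Delta_A$. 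Matching the two descriptions of grouplike elements gives the bijection. So the skeleton of your argument (corepresentability, track the universal element) is the right one, but you must replace ``primitive'' by ``grouplike congruent to $1$'' throughout the second half, and the ``main obstacle'' you identify dissolves once this is done.
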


One can put this all together to see that there are duality functors which are moreover represented by the multiplicative group and the formal multiplicative group, respectively. 

One has the following expected base-change property. 

\begin{prop}\label{basechange}
Let  $\formalgroup$ be a formal group over $\Spec(R)$, and suppose there is a map $f\colon R \to S$ of commutative rings. Let $\formalgroup_{S}$ denote the formal group over $\Spec(S)$ obtained by base change.  Then there is a natural isomorphism 
$$
D^T(\formalgroup|_{S}) \simeq D^T(\formalgroup)_{S} 
$$
of affine group schemes over $\Spec(S)$. 
\end{prop}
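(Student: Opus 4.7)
The plan is to reduce the claim to the entirely formal fact that base change along $f \colon R \to S$ commutes with the $\Spec(-)$ functor on Hopf algebras, by routing both sides through the equivalence of Cartier duality at the level of smooth Hopf algebras.

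First I would invoke Proposition \ref{fullyfaithfultobecited} to identify the formal group $\formalgroup$ with $\on{coSpec}(H)$ for some abelian group object $H$ in $\on{cCAlg}^{sm}_R$, i.e.\ a smooth Hopf algebra over $R$. Unpacking the duality described in Section \ref{ogdiscussion}, the coordinate adic algebra is $\OO(\formalgroup) = H^\vee$, and the continuous dual of the adic algebra $H^\vee$ recovers $H$ itself (now regarded as a commutative Hopf algebra), so that the Cartier dual affine group scheme is
\[
D^T(\formalgroup) \;=\; \Spec(H).
\]

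Next I would use that $\on{coSpec}$ commutes with base change (the final assertion of Proposition \ref{fullyfaithfultobecited}) to conclude that $\formalgroup|_S = \on{coSpec}(H \otimes_R S)$. Here one needs smoothness of coalgebras to be preserved under base change, but this is immediate from Definition \ref{smoothcoalgoriginal}: writing $H \cong \Gamma^*_R(M)$ with $M$ projective over $R$, the module $M \otimes_R S$ is projective over $S$ and $\Gamma^*_R(M) \otimes_R S \cong \Gamma^*_S(M \otimes_R S)$, the abelian group structure transporting along this identification. Hence $H \otimes_R S$ is again a smooth Hopf algebra, now over $S$, corresponding under Cartier duality to $\Spec(H \otimes_R S)$.

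Combining these two identifications, we compute
\[
D^T(\formalgroup|_S) \;=\; \Spec(H \otimes_R S) \;\cong\; \Spec(H) \times_{\Spec R} \Spec S \;=\; D^T(\formalgroup)_S,
\]
as affine group schemes over $\Spec S$, the middle isomorphism being the standard compatibility of $\Spec(-)$ with base change of commutative Hopf algebras. The naturality in $f$ is built into the fact that every step is a functorial construction. I expect the only nontrivial point to be the preservation of smoothness under base change together with the verification that the group-object structure is carried through the identification $\Gamma^*_R(M) \otimes_R S \cong \Gamma^*_S(M \otimes_R S)$; but this is already encoded in the assertion of Proposition \ref{fullyfaithfultobecited} that $\on{coSpec}$ commutes with finite products and base change, so nothing beyond a careful unpacking is required.
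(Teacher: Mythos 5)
Your proof is correct, and it follows essentially the same route the paper takes: the paper states Proposition \ref{basechange} without proof, but its spectral analogue (Proposition \ref{basechangeinspectralsetting}) is proved by exactly your argument, namely that linear duality between smooth coalgebras and their adic duals commutes with base change and preserves finite products, hence upgrades to abelian group objects. Your additional verification that $\Gamma^*_R(M)\otimes_R S \cong \Gamma^*_S(M\otimes_R S)$ for $M$ projective is a correct and worthwhile unpacking of the smoothness-preservation point that the paper leaves implicit.
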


\section{Filtered formal groups} \label{filteredformalgroupsection}\label{sec4}
We define here a notion of a filtered formal group, along with Cartier duality for these. We discuss here only (``underived'') formal groups over discrete commutative rings, but we conjecture that these notions generalize to the case where $R$ is a connective $E_\infty$ ring. 

\subsection{Filtrations and $\boldsymbol{\filstack}$}
We first recall a few preliminaries about filtered modules over $E_\infty$-rings. 

\begin{defn}
Let $R$ be an $E_\infty$-ring. We set 
$$
\on{Fil}_R := \on{Fun}(\Z^{\op}, \Mod_R),
$$
where $\Z$ is viewed as a category with morphisms given by the partial ordering $\geq$, and we refer to this as the $\infty$-category of filtered $R$-modules.
\end{defn}

\begin{defn}
Let $R$ be an $E_\infty$-ring. We set 
$$
\on{Gr}_R := \on{Fun}(\Z^{\on{ds}}, \Mod_R),
$$
where $\Z^{\on{ds}}$ is viewed as discrete space, and we refer to this as the $\infty$-category of graded $R$-modules.
\end{defn}

\begin{rem}
The $\infty$-category $\on{Fil}_R$ is symmetric monoidal with respect to the Day convolution product. 
\end{rem}

\begin{defn}
There exist functors 
$$
\on{Und}\colon \on{Fil}_R \lra \Mod_R, \quad \gr\colon \on{Fil}_R \lra \on{Gr}_R
$$
such that to a filtered $R$-module $M$, one associates its underlying object $\on{Und}(M)= \colim_{n \to -\infty} M_n$ and $\gr(M) = \oplus_n \on{cofib}(M_{n+1} \to M_{n})$, respectively. 
\end{defn}

\begin{ex}
Let $A$ be a commutative ring and $I \subset A$ an ideal of $A$. We define a filtration $F^*_I(A)$ with 
$$
F^n_I(A) =
\begin{cases}
A, & n \leq 0,  \\
I^n,& n \geq 1. 
\end{cases}
$$
This is the \emph{I-adic} filtration on $A$. 
\end{ex}

\begin{defn} \label{completenessfiltered}
There exists a notion of completeness in the setting of filtrations. We say a filtered $R$-module $M$ is complete if 
$$
\lim_n M_n \simeq 0. 
$$
Alternatively, $M$ is complete if $\lim M_{-\infty}/M_n \simeq M_{- \infty} = \on{Und}(M)$. We denote the $\infty$-category of filtered modules which are complete by $\widehat{\on{Fil}}_R$. This will be a localization of $\on{Fil}_R$ and will come equipped with a completed symmetric monoidal, so that the \emph{completion} functor
$$
\widehat{(-)}\colon \on{Fil}_R \lra \widehat{\on{Fil}}_R 
$$
is symmetric monoidal.
\end{defn}
 
\begin{const}
The category of filtered $R$-modules, as an $R$-linear stable $\infty$-category, can be equipped with several different $t$-structures. We will occasionally work with the \emph{neutral} t-structure on $\on{Fil}_R$, defined so that 
$F^*(M) \in (\on{Fil}_R)_{\geq 0}$ if $F^n(M) \in \on (\Mod_{k})_{\geq 0}$ for all $n \in \Z$. Similarly,
$F^*(M) \in (\on{Fil}_R)_{\leq 0}$ if  $F^n(M) \in \on (\Mod_{R})_{\leq 0}$ for all $n \in \Z$.

We remark that the standard $t$-structure on $\Mod_R$ is compatible with sequential colimits (\textit{cf}. \cite[Definition 1.2.2.12]{luriehigher}). This has the consequence that if $F^*(M) \in \on{Fil}_R^{\heartsuit}$, then 
$$
\colim_{n \to - \infty} F^n(M) = \on{Und}( F^*(M))  \in \Mod_k^\heartsuit. 
$$
We occasionally refer to filtered $R$-modules with are in the heart of this $t$-structure as discrete.
\end{const}

We now briefly recall the description of filtered objects in terms of quasi-coherent sheaves over the stack $\filstack$. This quotient stack may be defined as the quotient of $\AAA^1 = \Spec(R[t])$ by the canonical $\GG_m = \Spec(R[t, t^{-1}])$-action induced by the inclusion $\GG_m \hookrightarrow \AAA^1$ of monoid schemes. This comes equipped with two distinguished points 
$$
1\colon \Spec(R) \cong \GG_m / \GG_m \lra \filstack, 
\quad
0\colon B \GG_m= \Spec(R) / \GG_m \lra \filstack, 
$$
which we often refer to in this work as the generic and special/closed point, respectively. 
We remark that the quotient map $\pi\colon \AAA^1 \to \filstack$ is a smooth (and hence fppf) atlas for  $\filstack$, making $\filstack$ into an Artin stack.  

\begin{thm}[\textit{cf.} \protect{\cite[Theorem 1.1]{geometryofilt}}]
There exists a symmetric monoidal equivalence 
$$
\on{Fil}_R \lra \QCoh(\filstack). 
$$
Furthermore, under this equivalence, one may identify the underlying object and associated graded functors with pullbacks along $1$ and $0$, respectively. 
\end{thm}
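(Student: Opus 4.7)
The plan is to realize both sides of the desired equivalence concretely as graded $R[t]$-modules, and then match structures there. Since $\filstack$ is the quotient stack $[\AAA^1 / \GG_m]$ with $\AAA^1 = \Spec R[t]$ and $\GG_m$ acting by weight $-1$ scaling of $t$, descent identifies $\QCoh(\filstack)$ with the $\infty$-category of $\GG_m$-equivariant quasi-coherent sheaves on $\AAA^1$, which in turn is $\Mod_{R[t]}^{\on{gr}}$, the $\infty$-category of graded $R[t]$-modules with $t$ placed in weight $-1$. Unwinding definitions, such a graded module is precisely a graded $R$-module $\{M_n\}_{n \in \Z}$ together with transition maps $t\cdot -: M_n \to M_{n-1}$, i.e., a functor $\Z^{\op} \to \Mod_R$; this is exactly the $\infty$-category $\on{Fil}_R$.

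For the symmetric monoidal enhancement, one must match Day convolution on $\on{Fun}(\Z^{\op}, \Mod_R)$ with the tensor product $-\otimes_{R[t]} -$ of graded modules. The cleanest route, I think, is through the universal property of Day convolution: the Day symmetric monoidal structure is uniquely characterized (up to symmetric monoidal equivalence) as the symmetric monoidal structure on $\on{Fun}(\Z^{\op}, \Mod_R)$ that preserves colimits separately in each variable and restricts, along the Yoneda embedding, to the additive symmetric monoidal structure on $\Z^{\op}$. The tensor product on $\Mod_{R[t]}^{\on{gr}}$ visibly satisfies both conditions, since the free graded modules $R[t](n)$ satisfy $R[t](m) \otimes_{R[t]} R[t](n) \simeq R[t](m+n)$. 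This forces the underlying equivalence of $\infty$-categories to be symmetric monoidal.

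The identification of the pullback functors is then a direct calculation on graded modules. The closed point $1: B\GG_m \to \filstack$ arises from the origin $\Spec R \xrightarrow{t=0} \AAA^1$, so $1^*$ is base change $-\otimes_{R[t]} R$ with $t$ acting by zero; on a graded module $M_*$ this yields $\bigoplus_n M_n/tM_{n+1}$, which recovers the associated graded of the corresponding filtered object. Dually, $0: \Spec R \cong \GG_m/\GG_m \to \filstack$ corresponds to inverting $t$, so $0^*$ is base change $-\otimes_{R[t]} R[t, t^{-1}]$; since a graded $R[t, t^{-1}]$-module is determined by any single weight piece via the invertible action of $t$, the underlying $R$-module is computed as $\colim_{n \to -\infty} M_n$, which is precisely the underlying-object functor. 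Matching the labels $0,1$ gives the assertion.

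The principal technical obstacle is the second paragraph: verifying the symmetric monoidal part of the equivalence at the $\infty$-categorical level, rather than at the 1-categorical (abelian) level where the statement is classical. The universal-property approach sketched above is the most efficient route, but making it rigorous requires invoking Lurie's characterization of Day convolution from \cite[\S 4.8]{luriehigher}. Once the monoidal equivalence is in hand, the remaining identifications of pullback with $\on{Und}$ and $\gr$ are essentially bookkeeping.
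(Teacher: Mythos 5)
The paper states this theorem without proof, recalling it from the literature (it is the Rees-construction equivalence of \cite{geometryofilt} and \cite{lurie2015rotation}), and your argument is precisely the standard one: descent to graded $R[t]$-modules with $t$ in weight $-1$, the universal property of Day convolution for the monoidal comparison, and Koszul-type computations of the two pullbacks. The mathematics is correct. Two small points. First, in the derived setting the fiber at the origin should be computed as $\bigoplus_n \on{cofib}(M_{n+1}\xrightarrow{t} M_n)$ via the Koszul resolution $R[t](1)\xrightarrow{t}R[t]\to R$, not as the naive quotient $\bigoplus_n M_n/tM_{n+1}$; these agree only on discrete objects with injective transition maps, and the cofiber description is what matches the paper's definition of $\gr$. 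Second, your labels for the two points are swapped relative to the theorem's conclusion: the underlying object is the pullback to the open locus $\GG_m/\GG_m$ (the image of $1\in\AAA^1$, hence the point the theorem calls $1$), and the associated graded is the pullback to $B\GG_m$ (the image of $0\in\AAA^1$, the point called $0$); the paper's own display introducing the two distinguished points appears to have the labels reversed, and your phrase ``matching the labels'' inherits that ambiguity --- it is worth stating the convention explicitly so the conclusion reads unambiguously.
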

 
\subsection{Filtered formal groups}
We adopt the approach to formal groups in \cite{ellipticII} described above, where they are in particular smooth coalgebras $C$ with  
$$
C = \bigoplus_{i \geq 0} \Gamma^{i}(M), 
$$
where $M$ is a (discrete) projective module of finite type. Here, $\Gamma^n$ for each $n$ denotes the $\supth{n}$ divided power functor which for a dualizable module $M$ can be alternatively defined as 
$$
\Gamma^n(M):= \on{Sym}^n(M^\vee)^\vee, 
$$
that is to say, as the dual of the symmetric powers functor. 

\begin{const}
By the results of \cite{brantner2019deformation, raksit2020hochschild}, these can be extended to the $\infty$-categories  $\Mod_R$, $\on{Gr}(\Mod_R)$, $\on{Fil}(\Mod_R)$ of $R$-modules, graded $R$-modules and filtered $R$-modules, respectively. These are referred to as the \emph{derived divided powers} 

In particular, the $\supth{n}$ (derived) divided power functors
$$
\Gamma_{\gr}^n\colon \on{Gr}_R \lra \on{Gr}_R, \quad \Gamma_{\fil}^n\colon  \on{Fil}_R \lra \on{Fil}_R
$$
make sense in the graded and filtered contexts as well.  
\end{const}

\begin{defn}  \label{smoothfilteredcoalg}
A smooth filtered coalgebra is a coalgebra of the form
$$
C = \bigoplus_{n \geq 0} \Gamma_{\fil}^n(M),
$$
for $M$ a filtered $R$-module whose underlying object is a discrete projective $R$-module of finite type with $\gr(M)$ concentrated in nonpositive weights.
Note that this acquires a canonical coalgebra structure, as in \cite[Construction 1.1.11]{ellipticII}. Indeed, if we apply $\Gamma^*$ to $M \oplus M$, we obtain compatible maps 
$$
\Gamma^{n' + n''}(M \oplus M) \lra \Gamma^{n'}(M) \otimes \Gamma^{n''}(M), 
$$
where this is to be interpreted in terms of the Day convolution product. As in the unfiltered case in \cite[Construction 1.1.11]{ellipticII}, these assemble to give equivalences
$$
\Gamma^*(M \oplus M) \simeq \Gamma^*(M) \otimes \Gamma^*(M). 
$$
Via the diagonal map $M \to M \oplus M$ (recall $\on{Fil}(\Mod_k)$ is stable), this gives rise to a map
$$
\Delta\colon \Gamma^*(M) \lra \Gamma^{*}(M \oplus M) \simeq \Gamma^*(M) \otimes \Gamma^*(M)
$$
which one can verify exhibits $\Gamma^*(M)$ as a coalgebra in the category of filtered $k$-modules. 
\end{defn}

\begin{prop}
Let $M$ be a dualizable filtered $R$-module. Then the formation of divided power coalgebras is compatible with the associated graded and underlying object functors. 
\end{prop}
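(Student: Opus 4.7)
The plan is to reduce the claim to two well-known compatibilities: symmetric monoidal colimit-preserving functors commute with $\on{Sym}^n$, and they commute with dualization of dualizable objects.

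First, I would observe that both $\gr$ and $\on{Und}$ are symmetric monoidal colimit-preserving functors between stable presentable symmetric monoidal $\infty$-categories. Under the equivalence $\on{Fil}_R \simeq \QCoh(\filstack)$, these are pullbacks along the maps $0: B\GG_m \to \filstack$ and $1: \Spec R \to \filstack$ respectively, each of which is a symmetric monoidal left adjoint. In particular they preserve dualizable objects, and for any dualizable $M \in \on{Fil}_R$ there are canonical equivalences $\gr(M^\vee) \simeq \gr(M)^\vee$ and $\on{Und}(M^\vee) \simeq \on{Und}(M)^\vee$.

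Second, I would invoke the compatibility of derived symmetric powers with colimit-preserving symmetric monoidal functors, which is a consequence of the polynomial-functor framework of \cite{brantner2019deformation} and \cite{raksit2020hochschild}: derived symmetric powers are characterized as the unique polynomial extensions of the ordinary symmetric powers on free (discrete, projective) modules, and this characterization is manifestly preserved by any colimit-preserving symmetric monoidal functor. Applied to $\gr$ and $\on{Und}$, this yields
$$
\gr(\on{Sym}^n_{fil}(M)) \simeq \on{Sym}^n_{gr}(\gr(M)), \qquad \on{Und}(\on{Sym}^n_{fil}(M)) \simeq \on{Sym}^n(\on{Und}(M)).
$$

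Finally, using the formula $\Gamma^n(M) = \bigl(\on{Sym}^n(M^\vee)\bigr)^\vee$ valid on dualizable objects, I combine the two compatibilities to conclude
$$
\gr(\Gamma^n_{fil}(M)) \simeq \bigl(\on{Sym}^n_{gr}(\gr(M)^\vee)\bigr)^\vee \simeq \Gamma^n_{gr}(\gr(M)),
$$
and similarly for $\on{Und}$. I expect the main technical obstacle to be verifying the compatibility of derived symmetric powers with $\gr$ and $\on{Und}$ in the stated generality; once this universal property of derived symmetric powers as polynomial functors is in hand, the remainder is a formal manipulation of dualizable objects in a symmetric monoidal $\infty$-category.
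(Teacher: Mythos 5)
Your proposal is correct and follows essentially the same route as the paper: both reduce to the formula $\Gamma^n(M) = \on{Sym}^n(M^\vee)^\vee$, use that $\gr$ and $\on{Und}$ are symmetric monoidal (hence commute with duals of dualizable objects), and invoke the compatibility of derived symmetric powers with these functors via \cite[4.2.25]{raksit2020hochschild}. The extra detail you give on why $\on{Sym}^n$ commutes with $\gr$ and $\on{Und}$ is just an unpacking of the citation the paper makes at the same step.
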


\begin{proof}
Let $\on{Und}\colon \on{Fil}_R \to \Mod_R$ and $\gr\colon \on{Fil}_R \to \on{Gr}_R$ denote the underlying object and associated graded functors, respectively. Each of these functors commutes with colimits and is symmetric monoidal. Thus, we are reduced to showing that each of these functors commutes with the divided power functor $\Gamma_{\fil}^n(-)$. For this, we use the following description of the divided powers (for an arbitrary dualizable object $M$):
$$
\Gamma_{\fil}^n(M) = \Sym_{\fil}^n(M^\vee)^\vee, 
$$
which is valid by \cite[Proposition 3.39]{brantner2019deformation}. 
The statement now follows from the fact that $\on{Und}$ and $\gr$, being symmetric monoidal, commute with dualizable objects and that they commute with $\on{Sym}^n$, which follows from the discussion in \cite[Remark~4.2.25]{raksit2020hochschild}. 
\end{proof}

\begin{defn}
The category of smooth filtered coalgebras  $\on{cCAlg}(\on{Fil}_k)^{\sm}$ is the full subcategory of filtered coalgebras generated by objects of this form. Namely, 
$C \in \on{cCAlg}(\on{Fil}_R)^{\sm}$ if there exists a filtered module $M$ which is dualizable, discrete and zero in positive degrees for which 
$$
C \simeq \bigoplus_{n \geq 0} \Gamma^n_{\fil}(M). 
$$
\end{defn}

\begin{rem}
The filtered module $M$ in the above definition is of the form 
$$ 
\cdots \supset M_{-2} \supset M_{-1} \supset M_0 \supset 0 \cdots,  
$$
which is eventually constant. 
\end{rem}
We now define the notion of a filtered formal group. 

\begin{defn} \label{filteredformalgroupdefinition}
A filtered formal group is an abelian group object in the category of smooth coalgebras. That is to say, it is a product-preserving functor 
$$
F\colon \on{Lat}^{\op} \lra \on{cCAlg}(\on{Fil}_R)^{\sm}. 
$$
\end{defn}

\begin{const}
Let $M \in \on{Fil}_R$ be a filtered $R$-module. We denote the (weak) dual $\underline{\Map}_{\fil}(M, R)$ by $M^{\vee}$. Note that if $M$ has a commutative coalgebra structure, then  this acquires the structure of a commutative algebra.
\end{const}

\begin{ex}
Let  $C = \oplus \Gamma_{\fil}^n(M)$ be a smooth coalgebra. Then one has an equivalence 
$$
C^\vee \simeq \left(\bigoplus \Gamma^n(M)\right)^\vee \simeq \prod_n \on{Sym}^n\left(M^\vee\right).   
$$
This is a complete filtered algebra.
\end{ex}

\begin{prop} \label{compatibilityofdual}
Let $C$ be a filtered smooth coalgebra, and let $C^\vee$ denote its $($filtered\,$)$ dual. Then at the level of the underlying object, there is an equivalence 
$$
\on{Und} C^\vee \simeq  \prod \on{Sym}^*(N)
$$
for some projective module $N$ of finite type.
\end{prop}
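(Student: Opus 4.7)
The plan is to dualize term-by-term in the direct sum decomposition of $C$ and then to apply the underlying object functor, reducing everything to a calculation about $\on{Sym}^n$ of a finitely-generated projective module.

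First I would unpack the definition: by hypothesis $C \simeq \bigoplus_{n \geq 0} \Gamma^n_{fil}(M)$ for some filtered module $M$ which is dualizable, discrete, has projective finite-type underlying object, and satisfies $\gr(M)$ concentrated in non-positive weights. Each summand $\Gamma^n_{fil}(M) \simeq \on{Sym}^n(M^\vee)^\vee$ is dualizable as the dual of a dualizable object, with dual identified with $\on{Sym}^n(M^\vee)$. Since duality in a stable symmetric monoidal $\infty$-category carries direct sums of dualizable objects to products, this gives
$$
C^\vee \;\simeq\; \prod_{n \geq 0} \on{Sym}^n(M^\vee).
$$

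Second, I would apply $\on{Und}$ factor-by-factor. By the preceding proposition (compatibility of $\on{Und}$ with divided and hence symmetric powers of dualizable filtered modules), and using that $\on{Und}$ is symmetric monoidal so that $\on{Und}(M^\vee) \simeq \on{Und}(M)^\vee$, each factor becomes $\on{Und}(\on{Sym}^n(M^\vee)) \simeq \on{Sym}^n(N)$, where I set $N := \on{Und}(M)^\vee$. This $N$ is projective of finite type, because $\on{Und}(M)$ is.

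The main obstacle, and the one nontrivial point, will be commuting $\on{Und}$ past the infinite product, since $\on{Und}$ is defined as a sequential colimit and such colimits do not preserve arbitrary products in general. To handle this I would use the hypothesis that $\gr(M)$ is concentrated in non-positive weights, which by duality forces $\gr(M^\vee)$, and consequently each $\gr(\on{Sym}^n(M^\vee))$, to be concentrated in non-negative weights. In filtered terms this means the transition map $\on{Sym}^n(M^\vee)_k \to \on{Sym}^n(M^\vee)_{k-1}$ is an equivalence for every $k \leq 0$, so the colimit defining $\on{Und}(\on{Sym}^n(M^\vee))$ is already attained at any $k \leq 0$. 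Hence for such $k$ the equality
$$
\Bigl(\prod_{n \geq 0} \on{Sym}^n(M^\vee)\Bigr)_k \;\simeq\; \prod_{n \geq 0} \on{Sym}^n(M^\vee)_k
$$
holds termwise and $\on{Und}$ commutes with the product. Combining the three steps gives $\on{Und}(C^\vee) \simeq \prod_{n \geq 0} \on{Sym}^n(N)$, which is the desired identification.
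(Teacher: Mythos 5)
Your proof is correct, and it reaches the same conclusion by a slightly different organization of the same underlying observation. The paper does not decompose $C^\vee$ as a product first; instead it uses the explicit formula for the filtration on the weak dual, $\underline{\Map}_{Fil}(C,R)_n \simeq \fib(C^\vee_\infty \to (C_{1-n})^\vee)$, and observes that because $C$ is concentrated in non-positive filtering degrees the terms $(C_{1-n})^\vee$ vanish in the range relevant to the colimit $\colim_{n \to -\infty}$, so that colimit is constant and one gets $\on{Und}(C^\vee) \simeq \on{Und}(C)^\vee \simeq \prod_m \on{Sym}^m(N)$ in one stroke. You instead dualize the direct sum termwise into a product of duals of the dualizable pieces $\Gamma^n_{fil}(M)$, identify $\on{Und}$ of each factor via the preceding compatibility proposition, and then confront the genuine issue of commuting the sequential colimit $\on{Und}$ past an infinite product. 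Your resolution of that issue --- that the weight hypothesis on $M$ forces each $\gr(\on{Sym}^n(M^\vee))$ into non-negative weights, so every factor's filtration stabilizes at index $0$ uniformly in $n$ and the colimit is again constant --- is exactly the same stabilization phenomenon the paper exploits, just applied factor-by-factor rather than to $C^\vee$ as a whole. Both arguments are sound; yours makes the product-versus-colimit interchange explicit (which the paper leaves implicit in its identification $\on{Und}(C)^\vee \simeq \prod_m \on{Sym}^m(N)$), at the cost of an extra reduction step, while the paper's fiber formula gives the identity $\on{Und}(C^\vee) \simeq \on{Und}(C)^\vee$ directly without ever splitting $C^\vee$ into factors.
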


\begin{proof}
We unpack what the weak dual functor does on the $\supth{n}$ filtering degree of a filtered $R$-module. If $M \in \on{Fil}_R$, then this may be described as 
$$
\left(M^\vee\right)_{n} = \underline{\Map}_{\on{Fil}}(M, R)_n \simeq \fib\left(M_\infty^\vee \lra M^\vee_{1-n}\right), 
$$
where $M^\vee_\infty$ is the dual of the underlying $R$-module. Now let $M= C$ be a smooth coalgebra, so that 
$$
C= \bigoplus \Gamma^k(N) 
$$
for $N$ as in Definition~\ref{smoothfilteredcoalg}. Then $C$ is concentrated in negative weights; hence $C_{1-n}$ vanishes as $n \to - \infty$, so  
$$
\on{Und}(C^\vee) \simeq \colim_n \fib\left(C^\vee_\infty \lra C^\vee_{1-n}\right) \simeq \fib \colim_{n} \left(C_\infty^\vee \lra C_{1-n}^\vee\right) = \colim_n C^\vee_\infty.
$$
In particular, since $C_{1-n}$ eventually vanishes, we obtain the colimit of the constant diagram associated to  $C^\vee_\infty$. Hence 
$$
\on{Und}\left(C^\vee\right) \simeq \on{Und}(C)^\vee \simeq \prod_{m \geq 0} \on{Sym}_R^m(N). 
$$
This shows in particular that weak duality of these smooth filtered coalgebras commutes with the underlying object functor. 
\end{proof}

\begin{rem}
Proposition~\ref{compatibilityofdual} justifies the definition of smooth filtered coalgebras which we propose (\textit{cf}.\ Definition~\ref{smoothfilteredcoalg}). In general, it is not clear that weak duality commutes with the underlying object functor (although this of course holds true on dualizable objects). 
\end{rem}

\subsection{Filtered Cartier duality}
The following statement summarizes the results of the  rest of this section. 

\begin{thm}[Filtered Cartier duality] \label{Cartier duality statement}
The weak duality functor induces an equivalence 
$$
(-)^\vee\colon \on{FFG}_{R} \simeq \on{coAb}(\mathcal{D})), 
$$
where $\mathcal{D} $ is a full $($discrete$)$ subcategory of the $\infty$-category $\on{CAlg}(\widehat{\on{Fil}_R})$ consisting of commutative algebras in filtered $R$-modules. 
\end{thm}

The first step to proving Theorem~\ref{Cartier duality statement} is the following key proposition which states that the (weak) duality functor is fully faithful when restricted to the underlying smooth coalgebra of a filtered formal group. 

\begin{prop} \label{keyproposition}
The assignment 
$\on{cCAlg}^{\sm}(\on{Fil}_R) \to  \on{CAlg}(\widehat{\on{Fil}_R})$ given by 
$$
C \longmapsto C^\vee = \Map(C, R) 
$$
is fully faithful. 
\end{prop}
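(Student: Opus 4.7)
My plan is to adapt Lurie's proof of the analogous unfiltered statement (Theorem 1.3.15 of \cite{ellipticII}) to the filtered context. Fix smooth filtered coalgebras $C = \bigoplus_{n \geq 0} \Gamma^n_{fil}(M)$ and $D = \bigoplus_{n \geq 0} \Gamma^n_{fil}(M')$ as in Definition \ref{smoothfilteredcoalg}, so that $M, M'$ are dualizable filtered $R$-modules whose underlying $R$-modules are projective of finite type. The argument proceeds in three steps.

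First, I would establish a filtered analog of Lurie's cofree universal property (\cite[Proposition 1.2.14]{ellipticII}), namely that evaluation on the linear component induces an equivalence
$$\Map_{\on{cCAlg}(\on{Fil}_R)^{sm}}(C, D) \simeq \Map_{\on{Fil}_R}(M, M').$$
Since $\Gamma^n_{fil}$ commutes with both $\on{Und}$ and $\gr$ (by the proposition preceding this one), this reduces to the unfiltered statement combined with a compatibility check tracking the filtration, which is clean because everything in sight is discrete.

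Second, on the algebra side, the preceding example identifies $C^\vee$ with the completed filtered symmetric algebra $\widehat{\on{Sym}}_{fil}(M^\vee)$. I would then establish the complementary universal property
$$\Map_{\on{CAlg}(\widehat{\on{Fil}}_R)}(D^\vee, C^\vee) \simeq \Map_{\on{Fil}_R}(M'^\vee, M^\vee).$$
The key observation is that $M'^\vee$ sits in strictly positive filtration weight (dually to $M'$ being in non-positive weights), so any filtered map out of $M'^\vee$ into a complete filtered algebra automatically lands arbitrarily deep in the augmentation filtration. This is what promotes the universal property of the free symmetric algebra to a universal property for its augmentation-adic completion when mapping into a complete filtered target.

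Finally, since $M, M'$ are dualizable in $\on{Fil}_R$, weak filtered duality induces an equivalence $\Map_{\on{Fil}_R}(M, M') \simeq \Map_{\on{Fil}_R}(M'^\vee, M^\vee)$. Composing the three equivalences produces the desired bijection between $\Map_{\on{cCAlg}(\on{Fil}_R)^{sm}}(C, D)$ and $\Map_{\on{CAlg}(\widehat{\on{Fil}}_R)}(D^\vee, C^\vee)$. The main obstacle lies in the second step: verifying that the filtered completion (in the sense of Definition \ref{completenessfiltered}) of $\on{Sym}_{fil}(M'^\vee)$ along its augmentation ideal carries the expected universal property. This should follow by combining the positive-weight placement of $M'^\vee$ with Proposition \ref{completenessnotionsagree}, which lets one check the relevant completeness after pullback along $\pi: \AAA^1 \to \filstack$ and thereby reduce to the classical statement for adic algebras, at which point the unfiltered analog of Lurie and standard continuity arguments assemble the conclusion.
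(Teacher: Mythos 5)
There is a genuine gap, and it occurs already in your first step. The equivalence you assert there,
$$
\Map_{\on{cCAlg}(\on{Fil}_R)^{sm}}(C,D) \simeq \Map_{\on{Fil}_R}(M, M'),
$$
is false: $\Gamma^*_{fil}(M')$ is not cofree in this sense, and ``evaluation on the linear component'' records only the linearization of a coalgebra map. Morphisms of smooth coalgebras $\Gamma^*(M) \to \Gamma^*(M')$ correspond dually to continuous algebra maps $\widehat{\Sym}(M'^\vee) \to \widehat{\Sym}(M^\vee)$, i.e.\ to maps of $M'^\vee$ into the entire augmentation ideal $\prod_{n \geq 1}\Sym^n(M^\vee)$; these are ``power series,'' of which the linear maps are merely the first Taylor coefficients. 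The filtration does not collapse this: for instance the group law of $\widehat{\GG_m}$, $F(X,Y)=X+Y+XY$, dualizes to a filtered coalgebra map $\Gamma^*(M)\otimes\Gamma^*(M)\to\Gamma^*(M)$ that is \emph{not} induced by any linear map $M\oplus M\to M$; were your Step 1 correct, every filtered formal group would be forced to be additive. Step 2 has the same defect (a complete filtered algebra map out of $\widehat{\Sym}_{fil}(M'^\vee)$ is determined by an arbitrary filtered map of $M'^\vee$ into the augmentation ideal, not into $M^\vee$). So the three equivalences you compose do not exist as stated, even though the two mapping spaces you ultimately want to compare are indeed both computed by the same (much larger) space of ``filtered power series.''

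The paper's proof avoids this by an Ind--Pro argument: it writes $C \simeq \colim_k C_k$ and $D \simeq \colim_m D_m$ with $C_k = \bigoplus_{0 \leq i \leq k}\Gamma^i(M)$ dualizable, uses compactness of $D_m$ to pull the colimit out of the coalgebra mapping space, and proves a factorization lemma (Lemma \ref{provethisshit}) showing that every complete filtered algebra map $\lim_k C_k^\vee \to D_m^\vee$ factors through a finite stage $C_j^\vee$, because $D_m^\vee$ is bounded/nilpotent. This reduces the statement to the anti-equivalence between algebras and coalgebras on dualizable filtered objects, where duality is automatic. If you wish to salvage your route, you must replace $\Map_{\on{Fil}_R}(M,M')$ on both sides by the space of filtered maps from $M'^\vee$ into the augmentation ideal of $\widehat{\Sym}_{fil}(M^\vee)$ and establish both universal properties for that object --- at which point you are essentially re-deriving the paper's factorization argument.
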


\begin{proof}
Let $D$ and $C$ be two  arbitrary smooth coalgebras. We would like to display an equivalence of mapping spaces 
\begin{equation} \label{maptobeshownasequivalence}
\on{Map}_{\on{cCAlg}^{\sm}(\on{Fil}_R)}(D, C) \simeq \Map_{\on{CAlg}(\widehat{\on{Fil}_R})}\left(C^\vee, D^\vee\right). 
\end{equation}  

Each of $C$ and $D$ may be written as a colimit, internally to filtered objects, 
$$
C \simeq \colim C_k, \quad D \simeq \colim D_m, 
$$
where 
$$
C_k = \bigoplus_{0 \leq i \leq k} \Gamma^i(M),  \quad D_m = \bigoplus_{0 \leq i \leq m} \Gamma^i(N). 
$$ 
Hence the map (\ref{maptobeshownasequivalence}) may be rewritten as a limit of maps of the form 
\begin{equation} \label{eq:4.2}
\Map_{\on{cCAlg}^{\sm}(\on{Fil}_R)}(D_m, C) \lra  \Map_{\on{CAlg}(\widehat{\on{Fil}_R})}\left(C^\vee, D_m^\vee\right). 
\end{equation}
The left-hand side of this may now be rewritten as 
$$
\Map_{\on{cCAlg}^{\sm}(\on{Fil}_R)}(D_m, \colim_k  C_k).  
$$
Now, the object $D_m$ will be compact by inspection (in fact, its underlying object is just a compact projective $k$-module), so that the above mapping space is equivalent to  
$$
\colim_k \Map_{\on{cCAlg}^{\sm}(\on{Fil}_R)}(D_m,  C_k).  
$$
We would now like to make a similar type of identification on the right-hand side of the map (\ref{eq:4.2}). 
For this, note that as a complete filtered algebra, $C^\vee \simeq \lim_k C_k^{\vee}$. 
Note  that there is a canonical map
$$
\colim_k \Map(C_k^\vee, D_m) \lra \Map(\lim C_k^\vee, D_m). 
$$
By Lemma~\ref{Lem:4.21} this is an equivalence.
Each term $C_k^{\vee}$, as a filtered object, is zero in high enough positive filtration degrees. As limits in filtered objects are created object-wise, one sees that the essential image of the above map consists of morphisms 
$$
\lim_k C_k^\vee  \lra C_j^\vee \lra D_m
$$
which factor through some $C_j^\vee$. Since $D_m$ is itself of the same form, then every map factors through some $C_j^\vee$. Hence we obtain the desired decomposition on the right-hand side of (\ref{eq:4.2}). It follows that the morphism of mapping spaces  (\ref{maptobeshownasequivalence}) decomposes into maps 
$$
\Map_{}(D_m, C_k) \lra \Map\left(C_k^\vee, D_m^\vee\right).
$$
These are equivalences because $D_j$ and $C_k$ are dualizable for every $j,k$, and the duality functor $(-)^\vee$ gives rise to an anti-equivalence between commutative algebra and commutative coalgebra objects whose underlying objects are dualizable. Assembling this all, we conclude that (\ref{maptobeshownasequivalence}) is an equivalence. 
\end{proof}

\begin{lem} \label{Lem:4.21}
The canonical map  of spaces 
$$
\colim \Map_{\on{CAlg}(\widehat{\on{Fil}_R})}\left(C_k^\vee, D_m \right) \lra \Map_{\on{CAlg}(\widehat{\on{Fil}_R})}\left(\lim\nolimits_k C_k^\vee, D_m\right) 
$$ 
induced by the projection maps $\pi_k\colon \lim_k C_k \to C_k$ is an equivalence.
\end{lem}

\begin{proof}
Fix an index $k$. We claim that the following is a pullback square of spaces:
\begin{equation} \label{pullbacksquare}
\xymatrix{
& \Map_{\on{CAlg}(\widehat{\on{Fil}_R})}\left(C_k^\vee, D_m \right)  \ar[d]^{\pi_k^*} \ar[r]^-{\on{Und}} &  \Map_{\on{CAlg}}\left(C_k^\vee, D_m \right) \ar[d]^{\pi_k^*}\\
 & \Map_{\on{CAlg}(\widehat{\on{Fil}_R})}\left(\lim_k C_k^\vee, D_m \right) \ar[r]^-{\on{Und}} & \Map_{\on{CAlg}}\left(\lim_k C_k^\vee, D_m \right)\rlap{.}
}    
\end{equation}
First note that even though $\on{Und}(-)$ does not generally preserve limits, it will preserve these particular limits by Proposition~\ref{compatibilityofdual}. To prove the claim, we see that the pullback  
$$
\Map_{\on{CAlg}(\widehat{\on{Fil}_R})}\left(\lim\nolimits_k C_k^\vee, D_m \right) \times_{\Map_{\on{CAlg}}(\lim_k C_k^\vee, D_m )}\Map_{\on{CAlg}}\left(C_k^\vee, D_m \right)
$$
parametrizes, up to higher coherent homotopy, ordered pairs $(f, g)$ with 

$$
f\colon \lim C_k^\vee \lra D_m 
$$
a map of filtered algebras and 
$$
g_k\colon C_k^\vee \lra D_m
$$
a map at the level of underlying algebras, such that there is a factorization of the underlying map 
$$
\on{Und}(f) \simeq \pi_k^*(g_k) = g_k \circ \pi_k
$$
along the map $\pi_k\colon \lim_k C_k^\vee \to C_k^\vee$. Recall that $\pi_k$ is also the underlying map of a morphism of filtered objects; since the composition  $\on{Und}(f) = g_k \circ \pi_k$ respects the filtration, this means that $g_k$ itself must respect the filtration as well.  This in particular gives rise to an inverse  
$$
\Map_{\on{CAlg}(\widehat{\on{Fil}_R})}\left(\lim\nolimits_k C_k^\vee, D_m \right) \times_{\Map_{\on{CAlg}}(\lim_k C_k^\vee, D_m )}\Map_{\on{CAlg}}\left(C_k^\vee, D_m \right) \lra \Map_{\on{CAlg}(\widehat{\on{Fil}_R})}\left(C_k^\vee, D_m \right) 
$$
of  the canonical map  
$$
\Map_{\on{CAlg}(\widehat{\on{Fil}_R})}\left(C_k^\vee, D_m \right) \lra \Map_{\on{CAlg}(\widehat{\on{Fil}_R})}\left(\lim\nolimits_k C_k^\vee, D_m \right) \times_{\Map_{\on{CAlg}}(\lim_k C_k^\vee, D_m )}\Map_{\on{CAlg}}\left(C_k^\vee, D_m \right)
$$
induced by the universal property of the pullback, which proves the claim. 
Now let $P_k$ denote the fiber of the left vertical map of~\eqref{pullbacksquare}. One sees that the fiber of the map
$$
\Map_{\on{CAlg}(\widehat{\on{Fil}_R})}\left(\lim\nolimits_k C_k^\vee, D_m \right) \times_{\Map_{\on{CAlg}}(\lim_k C_k^\vee, D_m )}\Map_{\on{CAlg}}\left(C_k^\vee, D_m \right)
$$
of the statement is $\colim P_k$. We would like to show that this is contractible. By the claim, this is equivalent to $\colim P^{\on{und}}_k $, where $P^{\on{und}}_k$ for each $k$ is the fiber of the right-hand vertical map of 
\eqref{pullbacksquare}. By \cite[Proof of Theorem 1.3.15]{ellipticII}, this is contractible. We will be done upon showing that the essential image the map in the statement is all of $\Map_{\on{CAlg}}(\lim_k C_k^\vee, D)$. To this end, we note that the essential image consists of maps 
$$
\lim\nolimits_k C_k^\vee  \lra C_j^\vee \lra D_m
$$
which factor through some $C_j^\vee$. However, since the underlying algebra of $D_m$ is nilpotent, every map factors through such a $C_j^\vee$. 
\end{proof}

\begin{rem}
  We remark that this is ultimately an example of the standard duality between ind and pro objects of an $\infty$-category $\mathcal{C}$. Indeed, one has a duality between algebras and coalgebras in $\on{Fil}_k$ whose underlying objects are dualizable. The equivalence of Proposition~\ref{keyproposition} is an equivalence between certain full subcategories of $\on{Ind}(\on{cCAlg}^{ \omega,\fil})$ and $\on{Pro}(\on{CAlg}^{\omega, \fil})$.
\end{rem}

\begin{defn} \label{cogroupdefinition}
Let $\mathcal{D}$ denote the essential image of the duality functor of Proposition~\ref{keyproposition}. Then, we define the category of (commutative) cogroup objects $\on{coAb}(\mathcal{D})$ to just be an abelian group object of the opposite category (\textit{i.e.}, of the category of smooth filtered coalgebras). As $(-)^\vee$ is an anti-equivalence of $\infty$-categories, this implies that Cartesian products on $\on{cCAlg(\on{Fil}_k})^{\sm}$ are sent to coCartesian products on $\mathcal{D}$. Hence this functor sends group objects to cogroup object. We refer to an object $C \in \on{coAb}(\mathcal{D})$ as a \emph{filtered formal group}.
\end{defn}

\begin{rem}
If $C^\vee$ is discrete (which is the setting we are primarily concerned with for the moment), then a commutative cogroup structure on $C$ is none other than a (co)commutative comonoid structure on $C^\vee$, making it into a bialgebra in complete filtered $R$-modules. 
\end{rem}

We now complete the proof of Theorem~\ref{Cartier duality statement}.  

\begin{proof}[Proof of Theorem~\ref{Cartier duality statement}]
Let 
$$
(-)^\vee\colon \on{cCAlg}\left(\on{Fil}_R\right)^{\sm} \lra \mathcal{D}
$$
be the equivalence of Proposition~\ref{keyproposition}. As described in Definition~\ref{cogroupdefinition} above, this may now be promoted to an equivalence 
$$
(-)^{\vee}\colon \on{Ab}(\on{cCAlg}(\on{Fil}_R)^{\sm}) \lra \on{CoAb}(\mathcal{D}). 
$$
This gives the desired duality between $\on{FFG}_R$ and $\on{CoAb}(\mathcal{D})$. 
\end{proof}

\begin{rem}
We explain our usage of the term \emph{filtered Cartier duality}. As we saw in Section~\ref{cartierdualityfield}, classical Cartier duality gives rise to an (anti)-equivalence between formal groups and affine groups schemes, at least in the most well-behaved situation over a field.  An abelian group object in smooth filtered coalgebras will be none other than a filtered Hopf algebra. This is due to the fact that we ultimately still restrict to the a $1$-categorical setting where Remark~\ref{abelianobjects1cat} applies, so abelian group objects agree with group-like commutative monoids. Out of this, therefore, one may extract a relative affine group scheme over $\filstack$. Hence the equivalence of Theorem~\ref{Cartier duality statement} may be viewed as a correspondence between filtered formal groups and a full subcategory of relatively affine group schemes over $\filstack$.
\end{rem}

Next, we prove a unicity result on complete filtered algebra structures with underlying object a commutative ring $A$ and specified associated graded (\textit{cf}.\ Theorem~\ref{introadicunicity}). 

\begin{prop} \label{unicityalgebras}
Let $A$ be a discrete $R$-algebra which is complete with respect to the $I$-adic topology induced by some ideal $I \subset A$. Let $A_* \in \on{CAlg}(\widehat{\on{Fil}}_R)$ be a complete, exhaustive, multiplicative filtration of $A$ such that there is an identification $\iota\colon \gr^*(A_*) = \gr^*(F_I^*(A))$. Suppose the inclusion $A_1 \to A$ factors through the ideal $I \subset A$ so that there is an inclusion 
$$
A_1 \lra I
$$
of $A$-modules. Then this map can be promoted to a multiplicative morphism of filtrations $A_* \to F_I^*(A)$ inducing  $\iota$. Hence we may identify $A_*$ with the $I$-adic filtration.
\end{prop}

\begin{proof}
Let $A_*$ be a complete filtered algebra with these properties. Using the identification of $\gr^*(A_*)$, we inductively extend the map
$$
A_1 \lra I
$$
in the hypothesis to a map 
$$
A_* \lra F_I^*(A).
$$
In degree~$2$, for example, there is an induced map $A_2 \to I^2$, coming from the fact that the composition $A_2 \to A_1 \to I \to I/I^2$ vanishes, which in turn follows from the hypothesis on the associated graded of $A_*$. 

Thus, we obtain a map of complete filtered algebras $A_* \to F_I^*(A)$. By construction, this map of filtrations induces the map $\iota$ by passing to the associated graded. Since both filtered objects are complete and since the associated graded functor is conservative when restricted to complete objects, we deduce that the map 
$$
A_* \lra F_I^*(A)
$$
is an equivalence of filtered algebras.  
\end{proof}

\begin{rem}
In particular, we may choose $A_* \in \mathcal{D}$, the image of the duality functor from smooth filtered coalgebras. In this case, $I= \Sym^{\geq 1}(M)$, the augmentation ideal of  $\widehat{\Sym}(M)$ for $M$ some projective module of finite type. 
\end{rem}

Now let $\formalgroup$ be a formal group over $\Spec(R)$, and let $\OO(\formalgroup)$ be its complete adic algebra of functions. This acquires a comultiplication 
$$
\OO\left(\formalgroup\right) \lra \OO\left(\formalgroup\right) \,\widehat{\otimes}\, \OO\left(\formalgroup\right) 
$$
and counit
$$
\epsilon\colon \OO\left(\formalgroup\right) \lra R 
$$
making $\OO(\formalgroup)$ into a abelian cogroup object in $\mathcal{D}$. Let $I= \ker(\epsilon)$ denote the augmentation ideal. By Proposition~\ref{unicityalgebras}, there exists a unique filtration with $I$ in filtering degree~$1$ and associated graded $\gr^*(F_{I}(A))$, where  $F_{I}(A)$ is the $I$-adic filtration on $A$. This will be exactly the $I$-adic filtration $F_{I}(A)$ itself. 

We show that this filtered algebra inherits the cogroup structure as well. 

\begin{cor} \label{uniqueformalgroupstructure}
The comultiplication  
$$
\Delta \colon \OO\left(\formalgroup\right) \lra \OO\left(\formalgroup\right) \,\widehat{\otimes}\, \OO\left(\formalgroup\right)
$$ 
can be promoted to a map of filtered complete algebras. Thus, there is a unique filtered formal group---\textit{i.e.}, an abelian cogroup object in the category $\mathcal{D}$ with associated graded free on a module concentrated in weight~$1$ and with underlying object is $\OO(\formalgroup)$---whose underlying formal group is $\formalgroup$.
\end{cor}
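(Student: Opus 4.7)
The plan is to show that the comultiplication and counit on $A := \OO(\formalgroup)$ automatically respect the $I$-adic filtration, so that $F^*_{ad}A$ inherits a cogroup structure in $\mathcal{D}$; uniqueness will then follow from Proposition \ref{unicityalgebras} together with the observation that a filtered lift of an underlying morphism is a property, not extra data.

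First, I would identify the augmentation ideal $I = \ker(\epsilon) \subset A$, which is precisely the adic ideal of definition making $A$ into a complete adic algebra. The key computation is the identification of the augmentation ideal $J := \ker(\epsilon \otimes \epsilon) \subset A \widehat{\otimes} A$. Writing $J = I \widehat{\otimes} A + A \widehat{\otimes} I$ and expanding powers gives
$$
J^n \;=\; \sum_{p+q=n} I^p \widehat{\otimes} I^q,
$$
which is exactly the filtering degree $n$ of the Day convolution $F^*_{ad}A \,\widehat{\otimes}\, F^*_{ad}A$ in the discrete setting.

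Next, since $\Delta: A \to A \widehat{\otimes} A$ is a morphism of augmented $R$-algebras, it sends $I$ into $J$, and by multiplicativity $\Delta(I^n) \subset J^n$ for every $n \geq 1$. This shows that $\Delta$ refines uniquely to a morphism in $\on{CAlg}(\widehat{\on{Fil}}_R)$ from $F^*_{ad}A$ to its Day-convolution square. The counit $\epsilon$ trivially refines, sending $F^1_{ad}A = I$ to $0$. The coassociativity, cocommutativity, and antipode axioms hold automatically for the refined maps because they hold at the underlying level and the filtered lift of a given underlying morphism is unique. This equips $F^*_{ad}A$ with an abelian cogroup structure in $\mathcal{D}$ refining the original formal group structure.

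For uniqueness, I would invoke Proposition \ref{unicityalgebras}: any complete filtered algebra structure $A_\bullet$ on $A$ whose associated graded is $\on{Sym}_{gr}(I/I^2)$ in weight one must coincide with $F^*_{ad}A$. Thus the underlying filtered algebra of any filtered formal group satisfying the hypotheses is pinned down, and since any refinement of $\Delta$ to a filtered map is unique once source and target filtrations are fixed, the cogroup structure is determined as well. The main technical point is the identification $J^n = \sum_{p+q=n} I^p \widehat{\otimes} I^q$ and its compatibility with the completed Day convolution; in the discrete $1$-categorical setting this is elementary, amounting to the fact that the Day convolution of $I$-adic filtrations of augmented algebras is the adic filtration with respect to the augmentation ideal of the tensor product.
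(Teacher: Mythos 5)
Your argument is correct, and it establishes the same key point as the paper's proof --- that $\Delta$ carries $I$ into the augmentation ideal of $\OO(\formalgroup)\widehat{\otimes}\OO(\formalgroup)$ and hence, by multiplicativity, $I^n$ into its $n$-th power --- but you package it coordinate-freely, whereas the paper chooses a local orientation $\OO(\formalgroup)\simeq R[[x]]$ and reads off from the formal group law $f(x_1,x_2)=x_1+x_2+\sum a_{i,j}x_1^ix_2^j$ that $\Delta(x)$ lies in $(x_1,x_2)$, then asserts that ``the same argument works in higher dimensions.'' Your route replaces that coordinate computation by the counit axiom: $(\epsilon\otimes\epsilon)\circ\Delta=\epsilon$ forces $\Delta(I)\subset J=\ker(\epsilon\otimes\epsilon)$, which is exactly the statement that the formal group law has no constant term, but stated without picking coordinates. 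What your version buys is uniformity in the dimension (no reduction to the oriented one-dimensional case and no appeal to local orientability) and an explicit identification of the target filtration, $J^n=\sum_{p+q=n}I^p\widehat{\otimes}I^q$, with the completed Day convolution square of $F^*_{ad}A$ --- a point the paper leaves implicit when it says the filtration on $R[[x_1,x_2]]$ ``is itself the $(x_1,x_2)$-adic filtration.'' The uniqueness step is handled identically in both: Proposition \ref{unicityalgebras} pins down the filtered algebra, and in the discrete $1$-categorical setting a filtered refinement of a fixed underlying morphism between objects with monomorphism filtrations is a property rather than data, so the cogroup axioms descend for free.
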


\begin{proof}
We need to show that  the comultiplication
$$
\Delta\colon \OO\left(\formalgroup\right) \lra \OO\left(\formalgroup\right)\,\widehat{\otimes}\, \OO\left(\formalgroup\right) 
$$
preserves the adic filtration. Let us first assume  that the formal group is  $1$-dimensional and oriented so that $\OO(\GG) \simeq R[[x]]$. We remark that every formal group is locally oriented.  In this case, the formal group law is given in coordinates by the power series
$$
f(x_1, x_2) = x_1 + x_2 + \sum_{i, j \geq 1} a_{i,j} x^i y^j
$$
with suitable $a_{i,j}$. 
In particular, the image of the ideal commensurate with the filtration is contained in $I^{\otimes 2} = (x_1, x_2)$, the ideal commensurate with the filtration on $\OO(\formalgroup) \widehat{\otimes} \OO(\formalgroup) \cong R[[x_1, x_2]]$. Note that this is itself the $(x_1, x_2)$-adic filtration on $R[[x_1, x_2]]$. By multiplicativity, $\Delta(I^n) \subset I^{\otimes 2 n}$ for all $n$. This shows that $\Delta$ preserves the filtration,  
giving $F^*_{I}A$ a unique coalgebra structure compatible with the formal group structure on $\formalgroup$. The same argument works in higher dimensions.  
\end{proof}

\section{The deformation of a formal group} \label{deformationsection}\label{sec5}

\subsection{Deformation to the normal cone}
To a pointed formal moduli problem $\EuScript{X}$ (such as a formal group), one may associate an equivariant family over $\AAA^1$ whose fiber over $\lambda \neq 0$ recovers $\EuScript{X}$. We will use this construction further on  to produce filtrations on the associated Hochschild homology theories. The author would like to thank Bertrand To\"{e}n for the idea behind this construction, and in fact related constructions appear in \cite{toen2020classes}. A variant of this construction in the characteristic zero setting also appears in \cite[Chapter IV.5]{gaitsgory2017study}. We would also like to point out \cite{khan2018virtual}. 

The construction pertains to more than just formal groups. Indeed, let $\EuScript{X} \to \EuScript{Y}$ be a closed immersion of  locally Noetherian schemes. We construct a filtration on $\widehat{\EuScript{Y}_{\EuScript{X}}}$, the formal completion of $\EuScript{Y}$ along $\EuScript{X}$, with associated graded the shifted tangent complex $T_{\EuScript{X}|\EuScript{Y}}[1]$. 

The first, and key, ingredient underlying all this is a cogroupoid object $S^{0, \bullet}_{\fil}$. We remark that the $\Spec(-)$ functor here is to be taken in the sense of affine stacks; \textit{cf}.\ \cite{toen2006champs}.

\begin{const} \label{construction of filtered zero sphere}
Let $\iota\colon B \GG_m  \to \filstack$. Let 
$$
\phi\colon \OO_{\filstack} \lra 0_*\left(\OO_{B \GG_m}\right) 
$$ 
be the unit map of commutative algebra objects in $$\on{CAlg}(\QCoh(\filstack)) \simeq \on{CAlg}(\on{Fil}(\Mod_k)).
$$ Finally, let $N(\phi)_\bullet$ be the nerve of this map, viewed as a simplicial object in this $\infty$-category; by construction this will be a groupoid object in $\on{CAlg}(\on{Fil}(\Mod_k))$. We define 
\begin{equation} {\label{mapforcechnerve}}
S^{0, \bullet}_{\fil} := \Spec(N(\phi)_\bullet),
\end{equation}
which will be a cogroupoid object in the $\infty$-category of derived affine schemes over $\filstack$.  
\end{const}

\begin{rem}
We now give a more explicit description of this groupoid object in degree~$1$. In Construction~\ref{construction of filtered zero sphere} above, the structure sheaf $\OO_{\AAA^1 / \GG_m}$ may be identified with the graded polynomial algebra $k[t]$, where $t$ is of weight $1$. In degree~$1$, one obtains the  fiber product 
\begin{equation}\label{fiberprod}
    \OO_{\AAA^1 / \GG_m} \times_{0_{*}(\OO_{B \GG_m})} \OO_{\AAA^1 / \GG_m}, 
\end{equation}
which may be thought of as the graded algebra 
$$
k[t_1, t_2]/(t_1+t_2)(t_1-t_2)
$$
viewed as an algebra over $k[t]$. 
If we apply the $\Spec(-)$ functor relative to $\AAA^1 / \GG_m$, we obtain the scheme corresponding to the union of the diagonal and antidiagonal in the plane. The pullback of this fiber product to $\Mod_{k}$ is 
$$
k \times_{1^{*}0_{*}(\OO_{B \GG_m})} k \simeq k \times_{0} k = k \oplus k. 
$$
The pullback to $\QCoh(B \GG_m)$ is $k[\epsilon]/ \epsilon^2$, the trivial square-zero extension of $k$ by $k$. To see this, we pull back the fiber product  (\ref{fiberprod})  to $\QCoh(B \GG_m)$, which gives the homotopy Cartesian square  
$$
\xymatrix{
& k[\epsilon]/(\epsilon^2) \ar[d]_{} \ar[r]^{} &  k \ar[d]^{}\\
 & k  \ar[r]^{}& k \oplus k[1]
}
$$
in this category. Hence we may define 
$$
S^0_{\fil}:= \Spec_{\filstack}  \left(\OO_{\AAA^1 / \GG_m} \times_{0_{*}(\OO_{B \GG_m})} \OO_{\AAA^1 / \GG_m}\right)
$$
as the relative spectrum (over $\filstack$). 
\end{rem}
 
\begin{rem}
By construction, this admits a map
$$
S^0_{\fil} \lra \AAA^1/\GG_m 
$$
making it into a filtered stack, with generic fiber and special fiber described in the above proposition.
We remark that we may think of $S^{0}_{\fil}$ as the degree~$1$ part of a \emph{cogroupoid object} $S^{0, \bullet}_{\fil}$ in the $\infty$-category of (derived) schemes over $\filstack$; indeed, we may apply $\Spec(-)$ to the entire Cech nerve of the map~\eqref{mapforcechnerve}. We can then take mapping spaces out of this cogroupoid to obtain a groupoid object. 
\end{rem}

Now let $\EuScript{X} \to \EuScript{Y}$ be a closed immersion of locally Noetherian schemes, as above. We will focus our attention on the following derived mapping stack, defined in the category $\dStk_{\EuScript{Y} \times \filstack}$ of derived stacks over  $\EuScript{Y} \times \AAA^1/ \GG_m$:
$$
\Map_{\EuScript{Y} \times \AAA^1 / \GG_m }\left(S_{\fil}^0, \EuScript{X} \times \AAA^1 / \GG_m\right). 
$$
By composing with the projection map $\EuScript{Y}\times \AAA^1 / \GG_m \to \AAA^1 / \GG_m$, we obtain a map 
$$
\Map_{\EuScript{Y} \times \AAA^1 / \GG_m }\left(S_{\fil}^0, \EuScript{X}\right) \lra \AAA^1 / \GG_m
$$
allowing us to view this as a filtered stack. The next proposition identifies its fiber over $1 \in \AAA^1 / \GG_m$. 

\begin{prop} \label{henven1}
There is an equivalence 
$$ 
1^*\left( \Map\left(S_{\fil}^0, \EuScript{X}\right)\right) \simeq \EuScript{X}\times_{\EuScript{Y}} \EuScript{X}. 
$$
\end{prop}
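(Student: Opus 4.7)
The plan is to pull back the mapping stack along the generic point $1 \colon \Spec k \to \filstack$ and then repeatedly unwind universal properties. The first step is a base-change observation: since $\on{dStk}_k$ is an $\infty$-topos and in particular Cartesian closed, formation of the internal mapping stack over any base commutes with base change along a morphism of that base. Applied to $\id_{\EuScript{Y}} \times 1 \colon \EuScript{Y} \to \EuScript{Y} \times \filstack$ (an instance of the adjointability statement of Proposition \ref{morebasechangeshiiiii}, or more directly of Cartesian closedness), this yields
$$
1^*\bigl(\Map_{\EuScript{Y} \times \filstack}(S^0_{fil},\, \EuScript{X} \times \filstack)\bigr) \simeq \Map_{\EuScript{Y}}\bigl(1^*(S^0_{fil}),\, 1^*(\EuScript{X} \times \filstack)\bigr).
$$

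The second step is to identify each pulled-back object individually. By Proposition \ref{identifyingthezerocircle}, the underlying object of $S^0_{fil}$ is the constant stack $S^0 = \Spec k \sqcup \Spec k$, hence $1^*(S^0_{fil}) \simeq \Spec k \sqcup \Spec k$; and $1^*(\EuScript{X} \times \filstack) \simeq \EuScript{X}$, regarded as an $\EuScript{Y}$-stack via the given closed immersion $\EuScript{X} \hookrightarrow \EuScript{Y}$. The third step is purely formal: the coproduct $\Spec k \sqcup \Spec k$ in $\on{dStk}_{/\EuScript{Y}}$ is sent by $\Map_{\EuScript{Y}}(-, \EuScript{X})$ to a fibre product over $\EuScript{Y}$, giving
$$
\Map_{\EuScript{Y}}(\Spec k \sqcup \Spec k,\, \EuScript{X}) \simeq \Map_{\EuScript{Y}}(\Spec k,\, \EuScript{X}) \times_{\EuScript{Y}} \Map_{\EuScript{Y}}(\Spec k,\, \EuScript{X}).
$$
A direct unwinding shows that for any $T \to \EuScript{Y}$ a $T$-point of $\Map_{\EuScript{Y}}(\Spec k,\, \EuScript{X})$ is exactly a lift of $T \to \EuScript{Y}$ through $\EuScript{X} \hookrightarrow \EuScript{Y}$, so $\Map_{\EuScript{Y}}(\Spec k,\, \EuScript{X}) \simeq \EuScript{X}$ as $\EuScript{Y}$-stacks. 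Substituting recovers $\EuScript{X} \times_{\EuScript{Y}} \EuScript{X}$.

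\textbf{Main obstacle.} The most delicate point is organisational rather than technical: one must pin down the structure map $S^0_{fil} \to \EuScript{Y} \times \filstack$ that is implicit in the definition of the mapping stack, and verify that on the generic fibre the two components of $S^0$ are tied together through a common image in $\EuScript{Y}$. It is precisely this common image that forces the fibre product over $\EuScript{Y}$, rather than the ordinary product $\EuScript{X} \times \EuScript{X}$, to appear. Once this bookkeeping is in place, the argument is a direct consequence of Cartesian closedness of $\on{dStk}_k$, the explicit description of $S^0_{fil}$ provided by Proposition \ref{identifyingthezerocircle}, and the universal property of coproducts.
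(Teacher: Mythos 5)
Your proposal is correct and follows essentially the same route as the paper's proof: base change of internal mapping objects along $1: \Spec k \to \filstack$, followed by the observation that the generic fibre of $S^0_{fil}$ is a disjoint sum of (relative) final objects over $\EuScript{Y}$, so that mapping out of it produces the fibre product $\EuScript{X} \times_{\EuScript{Y}} \EuScript{X}$. The paper states this in two sentences; your version merely spells out the coproduct-to-fibre-product step and the identification $\Map_{\EuScript{Y}}(\EuScript{Y}, \EuScript{X}) \simeq \EuScript{X}$ explicitly.
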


\begin{proof}
By formal properties of base change of mapping objects of $\infty$-topoi, there is an equivalence 
$$
1^*\left( \Map\left(S_{\fil}^0, \EuScript{X}\right)\right)  \simeq \Map_{\EuScript{Y}}\left(1^{*}S_{\fil}^0, 1^*\left(\EuScript{X} \times \AAA^1 / \GG_m \right)\right). 
$$
The right-hand side is the mapping object out of a disjoint sum of final objects and therefore is directly seen to be equivalent to $\EuScript{X}\times_{\EuScript{Y}} \EuScript{X}$. 
\end{proof}

Next we identify the fiber over the ``closed point'' $0\colon B \GG_m \to \AAA^1/ \GG_m$. 

\begin{prop} \label{henven2}
There is an equivalence of stacks 
$$
0^*\left( \Map\left(S_{\fil}^0, \EuScript{X}\right)\right) \simeq T_{\EuScript{X}|\EuScript{Y}},
$$
where $T_{\EuScript{X}|\EuScript{Y}}$ denotes the relative tangent bundle of\, $\EuScript{X} \to \EuScript{Y}$.  
\end{prop}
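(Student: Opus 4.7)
The plan is to reduce the computation to a standard tangent-complex calculation: apply base change for internal mapping stacks, identify the fiber $0^*S^0_{fil}$ as a first-order thickening of a point (essentially established in the proof of Proposition \ref{identifyingthezerocircle}), and then recognise the resulting mapping stack as the relative tangent bundle via the universal property of the cotangent complex.

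First I would use that internal $\Map$ in the Cartesian-closed $\infty$-topos $\on{dStk}_{\EuScript{Y}\times\filstack}$ is compatible with base change along $\id_{\EuScript{Y}}\times 0\colon \EuScript{Y}\times B\GG_m \to \EuScript{Y}\times\filstack$; any geometricity hypotheses needed for such a base change (of the flavour of Proposition \ref{morebasechangeshiiiii}) are satisfied since $\EuScript{X}$ is a locally Noetherian scheme. This yields
\[
0^*\Map_{\EuScript{Y}\times\filstack}(S^0_{fil},\EuScript{X}\times\filstack) \;\simeq\; \Map_{\EuScript{Y}\times B\GG_m}(0^*S^0_{fil},\, \EuScript{X}\times B\GG_m).
\]
Next, the Cartesian square displayed in the proof of Proposition \ref{identifyingthezerocircle} identifies the pullback of the defining algebra $\OO_{\filstack}\times_{0_*\OO_{B\GG_m}}\OO_{\filstack}$ along $0$ with the trivial square-zero extension $k[\epsilon]/(\epsilon^2)$, with $\epsilon$ placed in weight one. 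Hence $0^*S^0_{fil} \simeq \Spec_{B\GG_m}(k[\epsilon]/(\epsilon^2))$, whose structure map to $\EuScript{Y}$ factors as $\Spec(k[\epsilon]/(\epsilon^2))\twoheadrightarrow\Spec k\hookrightarrow\EuScript{X}\hookrightarrow\EuScript{Y}$; this reflects the fact that the two disjoint points of $S^0$ collapse to the diagonal of $\EuScript{X}$ over the special fiber.

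To finish, I would unwind the mapping stack via the universal property of the cotangent complex. For $T$ an affine over $\EuScript{Y}\times B\GG_m$, a $T$-point is an $\EuScript{Y}$-morphism $T[\epsilon]/(\epsilon^2)\to\EuScript{X}$ extending the prescribed augmentation; such extensions are classified by pairs $(f\colon T\to\EuScript{X},\,\sigma\colon f^*\LL_{\EuScript{X}/\EuScript{Y}}\to\OO_T)$, i.e.\ by $T$-points of the linear stack $\mathbb{V}(\LL_{\EuScript{X}/\EuScript{Y}})=T_{\EuScript{X}/\EuScript{Y}}$. The weight-one $\GG_m$-action on $\epsilon$ corresponds to the standard scaling action on tangent fibers. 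The most delicate step will be pinning down the $\EuScript{Y}$-structure on $0^*S^0_{fil}$ and verifying that the relevant base-point is the correct one: this is intuitive from the Cech-nerve picture but requires careful bookkeeping, after which the identification is formal.
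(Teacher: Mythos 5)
Your argument is correct and follows the same route as the paper: base change the internal mapping stack along $\Spec k \to B\GG_m \to \filstack$, identify $0^*S^0_{fil}$ with $\Spec(k[\epsilon]/(\epsilon^2))$ using the Cartesian square from the proof of Proposition \ref{identifyingthezerocircle}, and recognize maps out of this square-zero extension as points of the relative tangent bundle. The paper leaves the last step as an ``of course,'' whereas you spell out the classification of extensions by pairs $(f, \sigma\colon f^*\mathbb{L}_{\EuScript{X}/\EuScript{Y}}\to\OO_T)$; that is a welcome elaboration, not a different proof.
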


\begin{proof}
We base change along the map 
$$
\Spec(k) \lra B \GG_m \lra \AAA^1 / \GG_m.
$$ 
Invoking again the standard properties of base change of mapping objects, we obtain the equivalence 
$$
0^*\left( \Map\left(S_{\fil}^0, \EuScript{X}\right)\right)  \simeq \Map_{\EuScript{Y}}\left(0^{*}S_{\fil}^0, 0^*\left(\EuScript{X} \times \AAA^1 / \GG_m \right)\right).
$$
By construction, we may identify $0^{*}S_{\fil}^0$ with $\Spec(k[\epsilon]/ \epsilon^2)$. Of course, this means that the right-hand side of the above display is precisely the relative tangent complex $T_{\EuScript{X}| \EuScript{Y}}$.  
\end{proof}

To summarize, we have constructed a cogroupoid object in the category of schemes over $\AAA^1 / \GG_m$, whose piece  in cosimplicial degree~$1$ is $S^0_{\fil}$, and formed the derived mapping stack
$$
\Map_{\EuScript{Y} \times \AAA^1 / \GG_m }\left(S_{\fil}^0, \EuScript{X} \times \AAA^1 / \GG_m\right),
$$
which will in turn be the degree~$1$ piece of a  groupoid object in derived schemes over $\filstack$.

\begin{const}\label{delooping}
Let $\EuScript{M}_{\bullet}:= \Map_{\EuScript{Y} \times \AAA^1 / \GG_m }(S_{\fil}^{0,\bullet}, \EuScript{X} \times \AAA^1 / \GG_m)$. Note that we can interpret the degeneracy map
$$
\EuScript{X} \times \AAA^1 / \GG_m \lra \Map_{\EuScript{Y} \times \AAA^1 / \GG_m }\left(S_{\fil}^0, \EuScript{X} \times \AAA^1 / \GG_m\right)
$$
 as the ``inclusion of the constant maps.'' We reiterate that this is a groupoid object in the $\infty$-category of (relative) derived schemes over $\AAA^1 / \GG_m$.  We  let
 $$
 \Def_{\filstack}(\EuScript{X}/ \EuScript{Y}) := \colim_{\Delta}\EuScript{M}_\bullet
 $$
 denote the colimit of this groupoid object.  Note that the colimit is taken in the $\infty$-category of derived schemes over $\filstack$ (as opposed to all of derived stacks).   
\end{const}

\begin{rem} \label{groupoidisarelativescheme}
We emphasize the point that $\EuScript{M}_{\bullet}$ is a groupoid object in relative derived schemes over $\filstack$. To see this, note that via Propositions~\ref{henven1} and~\ref{henven2}, we have identified the degree~$1$ piece as a relative derived scheme. As this is a groupoid object, each term $\EuScript{M}_n$ may be written as an $n$-fold fiber product of relative derived schemes. Since the $\infty$-category is closed under fiber products, we now see this fact. 
\end{rem}

By construction, $\Def_{\filstack}(\EuScript{X}/ \EuScript{Y)}$ is a derived scheme over $\AAA^1 / \GG_m$. The following proposition identifies its ``generic fiber'' with the formal completion $\widehat{\EuScript{Y}_{\EuScript{X}}}$ of $\EuScript{X}$ in $\EuScript{Y}$.

\begin{prop} \label{genericfibercompletion}
There is an equivalence 
$$
1^* \Def_{\filstack}(\EuScript{X}/ \EuScript{Y}) \simeq \widehat{\EuScript{Y}_{\EuScript{X}}}. 
$$
\end{prop}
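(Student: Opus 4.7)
The plan is to reduce the statement to the derived formal descent result (Theorem \ref{formaldescent}) by pulling back the cosimplicial / simplicial picture along $1: \Spec k \to \filstack$ and identifying the resulting groupoid object with the \v{C}ech nerve of $f:\EuScript{X} \to \EuScript{Y}$.

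First, I would argue that $1^*$ commutes with the colimit defining $Def_{\filstack}(\EuScript{X}/\EuScript{Y})$. The relevant fact is that, for a map of schemes/stacks, the base change functor at the level of derived schemes over a base is a left adjoint (to the pushforward along $1$), so it preserves small colimits. Concretely, since the colimit of Construction \ref{delooping} is formed in derived schemes over $\filstack$, applying $1^*$ and recalling that derived schemes are closed under the relevant geometric realizations coming from groupoid objects (as used in Theorem \ref{formaldescent}), we obtain
\[
1^{*}\,\colim_{\Delta}\EuScript{M}_{\bullet}\;\simeq\;\colim_{\Delta}\bigl(1^{*}\EuScript{M}_{\bullet}\bigr),
\]
where the right-hand colimit is taken in derived schemes over $\Spec k$.

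Second, I would identify $1^{*}\EuScript{M}_{\bullet}$ with the \v{C}ech nerve $N(f)_{\bullet}$ of $f:\EuScript{X}\to\EuScript{Y}$. The previous proposition already handled the case of cosimplicial degree one, giving $1^{*}\EuScript{M}_{1}\simeq \EuScript{X}\times_{\EuScript{Y}}\EuScript{X}$. The same argument works uniformly: by the construction of $S^{0,\bullet}_{fil}$ as (the $\Spec$ of) the \v{C}ech nerve of the unit map $\OO_{\filstack}\to 0_{*}\OO_{B\GG_m}$, pulling back along $1$ turns $S^{0,n}_{fil}$ into an $(n+1)$-fold disjoint union of copies of $\Spec k$. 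Applying base change for internal mapping objects in the $\infty$-topos of derived stacks over $\EuScript{Y}$, one gets
\[
1^{*}\EuScript{M}_{n}\;\simeq\;\Map_{\EuScript{Y}}\bigl(1^{*}S^{0,n}_{fil},\,\EuScript{X}\bigr)\;\simeq\;\underbrace{\EuScript{X}\times_{\EuScript{Y}}\cdots\times_{\EuScript{Y}}\EuScript{X}}_{n+1}\;=\;N(f)_{n}.
\]
A check that the face and degeneracy maps on both sides correspond is straightforward from the functoriality of $\Map_{\EuScript{Y}}(-,\EuScript{X})$.

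Finally, combining these two steps and applying Theorem \ref{formaldescent} to the closed immersion $f:\EuScript{X}\to\EuScript{Y}$ of locally Noetherian schemes yields
\[
1^{*}Def_{\filstack}(\EuScript{X}/\EuScript{Y})\;\simeq\;\colim_{\Delta} N(f)_{\bullet}\;\simeq\;\widehat{\EuScript{Y}_{\EuScript{X}}},
\]
as desired. The main obstacle in this plan is the first step: one must confirm that forming the colimit \emph{in derived schemes} (rather than in all derived stacks) is compatible with $1^{*}$. This is precisely what Theorem \ref{formaldescent} is set up to guarantee for \v{C}ech nerves of closed immersions, so the cleanest route is to frame the argument so that after pullback we land in exactly that situation before taking the colimit.
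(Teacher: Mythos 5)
Your proposal is correct and follows essentially the same route as the paper's proof: pull back along $1$, use that pullback commutes with the colimit defining $Def_{\filstack}(\EuScript{X}/\EuScript{Y})$, identify the resulting groupoid object with the nerve $N(f)_{\bullet}$ of the closed immersion $f:\EuScript{X}\to\EuScript{Y}$, and conclude by Theorem \ref{formaldescent}. Your version simply spells out the levelwise identification $1^{*}\EuScript{M}_{n}\simeq N(f)_{n}$ and the colimit-compatibility point in more detail than the paper does.
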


\begin{proof}
As pullback commutes with colimits, this amounts to identifying the delooping in the category of derived schemes over $\EuScript{Y}$. Note again that all objects are schemes and not  stacks, so that this statement makes sense. By the above identifications, delooping the above groupoid corresponds to taking the colimit of the nerve $N(f)$ of the map $f\colon \EuScript{X} \to \EuScript{Y}$, a closed immersion. Hence it amounts to proving that 
$$
\colim_{\Delta^{\op}} N(f) \simeq \widehat{\EuScript{Y}_{\EuScript{X}}}. 
$$
This is precisely the content of Theorem~\ref{formaldescent}. 
\end{proof}

\begin{rem} \label{filteredderivedschemeandnotstack}
As discussed in Warning~\ref{derivedschemesnotstacks}, the formal completion
$\widehat{Y}_X$ acquires the universal property of the colimit 
$1^* \Def_{\filstack}(\EuScript{X}/ \EuScript{Y}) $ only upon restricting to derived schemes; \textit{i.e.}, there will be an equivalence 
$$
\Map_{\dStk}\left(\widehat{Y}_X, Z\right) \simeq
\Map_{\dStk}\left(1^* \Def_{\filstack}\left(\EuScript{X}/ \EuScript{Y}\right), Z\right) \simeq \lim_{n \in \Delta} \Map_{\on{dSch}}\left(1^* \EuScript{M}_n, Z\right) 
$$
whenever $Z \in \on{dSch}$.
For us this will not pose a problem because we will ultimately only be forming mapping stacks valued in derived schemes. 
\end{rem}

A consequence of Proposition~\ref{genericfibercompletion} is that the resulting object  is pointed by $\EuScript{X}$ in the sense that there is a well-defined map $\EuScript{X} \to \widehat{\EuScript{Y}_{\EuScript{X}}}$, arising from the structure map in the associated colimit diagram. This map is none other than the ``inclusion'' of $\EuScript{X}$ into its formal thickening. 

Our next order of business is, somewhat predictably at this point, to identify the fiber over $ B \GG_m$ of $\Def_{\filstack}(\EuScript{X}/ \EuScript{Y})$ with the normal bundle of $\EuScript{X}$ in  $\EuScript{Y}$. 

\begin{prop} \label{specialfiberformalgroup}
There is an equivalence 
$$
0^{*} \Def_{\filstack}(\EuScript{X}/ \EuScript{Y}) \simeq \widehat{\mathbb{V}\left(T_{\EuScript{X}|\EuScript{Y}}[1]\right)} =: \widehat{N_{\EuScript{X}|\EuScript{Y}}}
$$
in the $\infty$-category of derived schemes over $B \GG_m$ of our stack $\Def_{\filstack}(\EuScript{X}/ \EuScript{Y})$.
\end{prop}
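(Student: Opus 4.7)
The plan is to mirror the argument of Proposition \ref{genericfibercompletion}, now with the closed immersion $\EuScript{X} \to \EuScript{Y}$ replaced by the zero section $\EuScript{X} \hookrightarrow N_{\EuScript{X}|\EuScript{Y}}$ of the normal bundle. Since pullback along $0$ preserves colimits of derived schemes, and since the formation of mapping stacks is compatible with base change, one has
\[
0^* Def_{\filstack}(\EuScript{X}/\EuScript{Y}) \simeq \colim_{\Delta^{op}} 0^* \EuScript{M}_\bullet,
\qquad
0^* \EuScript{M}_n \simeq \Map_{\EuScript{Y}}(0^* S^{0,n}_{fil}, \EuScript{X}).
\]

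The core of the argument is to identify the groupoid $0^* \EuScript{M}_\bullet$ with the \v{C}ech nerve of the zero section $g \colon \EuScript{X} \hookrightarrow N_{\EuScript{X}|\EuScript{Y}}$. By construction, $S^{0,\bullet}_{fil}$ is the relative Spec over $\filstack$ of the \v{C}ech nerve of the unit map $\OO_{\filstack} \to 0_* \OO_{B\GG_m}$; its pullback to $B\GG_m$ therefore corresponds to the cosimplicial object of iterated trivial square-zero thickenings determined by the augmentation $k \to k \oplus k[1]$, as made explicit in the proof of Proposition \ref{identifyingthezerocircle}. Standard derived deformation theory, via the equivalence between maps out of a trivial square-zero extension and sections of the relative tangent complex, then identifies
\[
\Map_{\EuScript{Y}}(0^* S^{0,n}_{fil}, \EuScript{X}) \simeq \underbrace{\EuScript{X} \times_{N_{\EuScript{X}|\EuScript{Y}}} \cdots \times_{N_{\EuScript{X}|\EuScript{Y}}} \EuScript{X}}_{n \text{ factors}},
\]
compatibly with face and degeneracy maps. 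The case $n=1$ recovers the preceding proposition, using that $N_{\EuScript{X}|\EuScript{Y}} = \mathbb{V}(T_{\EuScript{X}|\EuScript{Y}}[1])$ has relative cotangent complex $T_{\EuScript{X}|\EuScript{Y}}[1]$ along its zero section, so that the self-intersection $\EuScript{X} \times_{N_{\EuScript{X}|\EuScript{Y}}} \EuScript{X}$ is equivalent to $T_{\EuScript{X}|\EuScript{Y}}$.

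Once this cosimplicial identification is in hand, the conclusion follows from derived formal descent (Theorem \ref{formaldescent}) applied to the closed immersion $g \colon \EuScript{X} \hookrightarrow N_{\EuScript{X}|\EuScript{Y}}$: the colimit of its \v{C}ech nerve is precisely the formal completion $\widehat{(N_{\EuScript{X}|\EuScript{Y}})_{\EuScript{X}}} = \widehat{N_{\EuScript{X}|\EuScript{Y}}}$, yielding the desired equivalence.

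The hard part will be the cosimplicial-level identification of mapping spaces for $n > 1$: one must verify that mapping out of an $n$-fold iterated trivial square-zero thickening reproduces the $n$-fold self-intersection of the zero section in the normal bundle, and that this equivalence is natural in the cosimplicial variable. While the $n=1$ case is essentially the universal property of the relative tangent complex, higher $n$ requires systematically exploiting the linear/vector-bundle structure on $N_{\EuScript{X}|\EuScript{Y}}$, so that maps into it from iterated square-zero thickenings assemble into the expected iterated fiber products compatibly with the simplicial structure maps.
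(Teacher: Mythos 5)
Your proposal is correct and follows essentially the same route as the paper: pull back the groupoid $\EuScript{M}_\bullet$ along $0$, identify it with the \v{C}ech nerve of the zero section $\EuScript{X} \to N_{\EuScript{X}|\EuScript{Y}}$, and conclude by Theorem \ref{formaldescent}. The only difference is in how that identification is made --- the paper shortcuts the level-by-level verification you flag as the hard part by observing that $T_{\EuScript{X}|\EuScript{Y}} \simeq \Omega_{\EuScript{X}}(T_{\EuScript{X}|\EuScript{Y}}[1])$, so the pulled-back groupoid with that loop space as its degree-one term is recognized at once as the nerve of the unit section into $N_{\EuScript{X}|\EuScript{Y}}$.
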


\begin{proof}
First, we remark that the right-hand side, being (the formal completion of) a linear stack over $\EuScript{X}$, acquires a  $\GG_m$-action. This can be seen as follows: First note that at the level of functors of points, $\GG_m(A)= \Map_{\Mod_{A}}(A, A)$. The action for each $\Spec(A) \to \EuScript{X}$ on $\mathbb{V}(T_{\EuScript{X}|\EuScript{Y}}[1])$  is thus given by composition: 
$$
\Map_{\Mod_{A}}(A, A) \times \Map_{\Mod_{A}}\left(u^*\left(T_{\EuScript{X}|\EuScript{Y}}[1]\right), A\right) \lra \Map_{\Mod_{A}}\left(u^*\left(T_{\EuScript{X}|\EuScript{Y}}[1]\right), A\right). 
$$

Now we proceed with the proof. As in the proof of the previous proposition, it amounts to understanding the pullback along $\Spec(k) \to B \GG_m \to \filstack$ of the groupoid object $\EuScript{M}_\bullet$. This is given by 
$$
\EuScript{X} \leftleftarrows T_{\EuScript{X}| \EuScript{Y}}\cdots, 
$$
where we abuse notation and identify $T_{\EuScript{X}| \EuScript{Y}}$ with $\mathbb{V}(T_{\EuScript{X}| \EuScript{Y}})$.
Note that $T_{\EuScript{X}| \EuScript{Y}} \simeq \Omega_{\EuScript{X}}(T_{\EuScript{X}|\EuScript{Y}}[1])$, and so we may identify the above colimit diagram with the simplicial nerve $N(f)$ of the unit section $\EuScript{X} \to T_{\EuScript{X}|\EuScript{Y}}[1] \simeq N_{\EuScript{X}|\EuScript{Y}}$. The result now follows from another application of Theorem~\ref{formaldescent}.
\end{proof}

The following statement summarizes the above discussion. 

\begin{thm} \label{yoyoma}
Let $f\colon \EuScript{X} \to \EuScript{Y}$ be a closed immersion of schemes. Then there exists a filtered  stack $\Def_{\filstack}(\EuScript{X}/ \EuScript{Y}) \to \AAA^1/ \GG_m$ $($making it into a relative scheme over $\AAA^1/\GG_m)$ with the property that there exists a map
$$
\EuScript{X} \times \AAA^1 / \GG_m \lra \Def_{\filstack}(\EuScript{X}/ \EuScript{Y})
$$
whose fiber over $1 \in \AAA^1 / \GG_m$ is 
$$
\EuScript{X} \lra \widehat{\EuScript{Y}_{\EuScript{X}}}
$$
and whose fiber over $0 \in \AAA^1/\GG_m$ is 
$$
\EuScript{X}  \lra \widehat{N_{\EuScript{X}| \EuScript{Y}}}, 
$$
the formal completion of the unit section of\, $\EuScript{X}$ in its normal bundle.
\end{thm}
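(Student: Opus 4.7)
The plan is essentially one of assembly: all the technical content has been established in Propositions \ref{genericfibercompletion} and \ref{specialfiberformalgroup}, so the proof amounts to packaging these statements together with the construction from \ref{delooping}.

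First I would take $Def_{\filstack}(\EuScript{X}/\EuScript{Y})$ to be the object defined in Construction \ref{delooping}, namely the colimit in derived schemes over $\filstack$ of the groupoid object $\EuScript{M}_\bullet = \Map_{\EuScript{Y} \times \filstack}(S^{0,\bullet}_{fil}, \EuScript{X} \times \filstack)$. The structure map $Def_{\filstack}(\EuScript{X}/\EuScript{Y}) \to \filstack$ is inherited from the fact that the colimit is taken relative to $\filstack$, and this is what makes the output a filtered (relative) scheme.

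Next I would produce the map $\EuScript{X} \times \filstack \to Def_{\filstack}(\EuScript{X}/\EuScript{Y})$. This comes directly from the degeneracy map $\EuScript{X} \times \filstack \to \EuScript{M}_1$ (the ``inclusion of constant maps'') composed with the canonical morphism $\EuScript{M}_1 \to \colim_{\Delta^{op}} \EuScript{M}_\bullet$ from the groupoid into its colimit. Equivalently, one uses the augmentation of the groupoid object to the delooping.

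For the two fiber identifications, I would use that pullback along $1: \Spec k \to \filstack$ and $0: B\GG_m \to \filstack$ commutes with colimits of derived schemes over $\filstack$. Pulling back the groupoid $\EuScript{M}_\bullet$ along $1$ yields, by the computation of $1^*\EuScript{M}_1$ in Section \ref{deformationsection} as $\EuScript{X} \times_{\EuScript{Y}} \EuScript{X}$, precisely the \v{C}ech nerve $N(f)_\bullet$ of $f: \EuScript{X} \to \EuScript{Y}$; its colimit is $\widehat{\EuScript{Y}_{\EuScript{X}}}$ by Proposition \ref{genericfibercompletion} (which is itself an instance of the formal descent theorem \ref{formaldescent}). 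Pulling back along $0$ yields, by the identification $0^*\EuScript{M}_1 \simeq T_{\EuScript{X}|\EuScript{Y}}$, the \v{C}ech nerve of the unit section $\EuScript{X} \to \mathbb{V}(T_{\EuScript{X}|\EuScript{Y}}[1]) = N_{\EuScript{X}|\EuScript{Y}}$, whose colimit is $\widehat{N_{\EuScript{X}|\EuScript{Y}}}$ by Proposition \ref{specialfiberformalgroup}. Under these identifications, the pullback of the map $\EuScript{X} \times \filstack \to Def_{\filstack}(\EuScript{X}/\EuScript{Y})$ becomes, respectively, $\EuScript{X} \to \widehat{\EuScript{Y}_{\EuScript{X}}}$ and $\EuScript{X} \to \widehat{N_{\EuScript{X}|\EuScript{Y}}}$, since the degeneracy is sent to the degeneracy under pullback.

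The only point requiring any care is the commutation of $1^*$ and $0^*$ with the colimit $\colim_\Delta \EuScript{M}_\bullet$; this is why it is essential that the colimit be taken in derived schemes over $\filstack$ rather than in arbitrary derived stacks, since the formal descent theorem \ref{formaldescent} realizes the formal completion as a colimit precisely in the category of derived schemes. Given that, the theorem is a direct consequence of the preceding two propositions.
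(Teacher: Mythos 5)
Your proposal is correct and matches the paper exactly: the theorem is stated there as a summary of the preceding discussion, with $Def_{\filstack}(\EuScript{X}/\EuScript{Y})$ given by Construction \ref{delooping}, the map induced by the degeneracy (inclusion of constant maps), and the two fiber identifications supplied by Propositions \ref{genericfibercompletion} and \ref{specialfiberformalgroup}. Your added remark about why the colimit must be taken in derived schemes over $\filstack$ (so that the formal descent theorem applies and pullback commutes with the colimit) is the same point the paper itself emphasizes.
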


\subsection{Deformation of a formal group to its normal cone} \label{Sec:5.2}
Fix a (classical) formal group $\widehat{\GG}$. We now apply the above construction to the unit section of the formal group, $\iota\colon \Spec(k) \to  \widehat{\GG}$. Note that $\formalgroup$ is already formally complete along $\iota$. We set
$$
\Def_{\filstack}\left(\formalgroup\right) := \Def_{\filstack}\left( \Spec(k) / \formalgroup\right). 
$$
This will be a relative scheme over $\filstack$.

\begin{prop} \label{Prop:5.12}
Let $\Spec(k) \to \formalgroup$ be the unit section of a formal group. Then, the stack $\Def_{\filstack}(\formalgroup)$ of Construction~\ref{delooping}  is a filtered formal group.
\end{prop}

\begin{proof}
We will show that there exists a filtered dualizable (and discrete) $R$-module  $M$ for which 
$$
\OO\left(\Def_{\filstack}\left(\formalgroup\right)\right) \simeq \Gamma^*_{\fil}(M)^\vee \simeq \widehat{\on{Sym}_{\fil}^*}\left(M^\vee\right).
$$
As was shown above, there is an equivalence  
$$
\Def_{\filstack}\left(\formalgroup\right)_1 \simeq \formalgroup, 
$$
where the left-hand side denotes the pullback along $\Spec(k) \to \filstack$; hence we conclude that the underlying object of $\OO(\Def_{\filstack}( \Spec(k) / \formalgroup))$ is  of the form $k[[t]] \simeq \widehat{\on{Sym}^*}(M)$ for  $M$ a free $k$-module of rank $n$. We now identify the associated graded of the filtered algebra corresponding to $\OO(\Def_{\filstack}(\formalgroup))$. For this, we use the equivalence 
$$
\Def_{\filstack}\left(\formalgroup\right)_0 \simeq \widehat{T_{\GG|k}}
$$
of stacks over $B \GG_m$. We note that the right-hand side may indeed be viewed as a stack over $B\GG_m$, arising from the weight $-1$ action of $\GG_m$  by homothety on the fibers. This is the $\GG_m$-action which will be compatible with the grading on the dual numbers $k[\epsilon]$ (which appears in Construction~\ref{construction of filtered zero sphere}) such that $\epsilon$ is of weight~$1$. 
In particular, since $\formalgroup$ is an $n$-dimensional formal group, it follows that the associated graded is none other than 
$$
\on{Sym}_{\gr}^*(M(1)), 
$$
the graded symmetric algebra on the graded $k$-module $M(1)$, which is $M$ concentrated in weight $1$.

Now let $M^f(1)$ be the filtered $k$-module 
$$
M^f(1)= 
\begin{cases}
M^f(1)_n = 0,& n>1, \\
M^f(1)_n = M,& n \leq 1. 
\end{cases}
$$
 We claim that there is a map 
 $$
 M^f(1) \lra \OO\left(\Def_{\filstack}\left(\formalgroup\right)\right);
 $$
this will follow from the fact that $M$ is projective, and so there will be a lift to $F^{1}(\OO(\Def_{\filstack}(\formalgroup)))$. Passing to filtered objects, this means that one has the desired map 
 $$
 M^f(1) \lra \OO\left(\Def_{\filstack}\left(\formalgroup\right)\right). 
 $$
This then induces a map
$$
\widehat{\on{Sym}_{\fil}}\left(M^f(1)\right) \lra \OO\left(\Def_{\filstack}\left(\formalgroup\right)\right)
$$
since $\OO(\Def_{\filstack}(\formalgroup))$ is a filtered commutative algebra and in fact complete as a filtered object. We now claim that this map is an equivalence; this follows  by completeness and from the fact that the induced map on associated gradeds is an equivalence.  

We would now like to identify the filtered object $\widehat{\on{Sym}_{\fil}}(M^f(1))$ with the $I$-adic filtration on $\widehat{\on{Sym}}(M)$. We remark that we now find ourselves in the setup of a filtered augmented monadic adjunction of \cite[Example 5.39, Proposition 5.40]{brantner2019deformation}; within this formalism, the functorially defined adic filtration on a free polynomial algebra will coincide with the filtered $\Sym$ construction  on $M^f(1)$. This equivalence will persist upon taking completions. 
Hence we conclude that the  filtration on $\OO(\Def_{\filstack}(\formalgroup))$ is none other than the adic filtration of  $\widehat{\on{Sym}(M)}$ with respect to the augmentation ideal. Finally, by Corollary~\ref{uniqueformalgroupstructure}, this acquires a canonical abelian cogroup structure which is a filtered enhancement of that of $\formalgroup$, making $\Def_{\filstack}(\formalgroup)$ into a filtered formal group. 
\end{proof}

Now we combine this construction with the $\filstack$-parametrized Cartier duality of Section~\ref{filteredformalgroupsection}.  

\begin{cor} \label{Cor:5.13}
Let $\formalgroup$ be a formal group over $\Spec(k)$, and let $\formalgroup^\vee$ denote its Cartier dual. Then the cohomology $R\Gamma(\formalgroup^{\vee}, \OO)$ acquires a canonical filtration. 
\end{cor}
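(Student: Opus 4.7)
The plan is to assemble the corollary directly from the pieces already put in place: the deformation-to-the-normal-cone construction applied to the unit section $\Spec k \to \formalgroup$, upgraded via Proposition \ref{gottausethistoo} to a filtered formal group $Def_{\filstack}(\formalgroup)$, together with the filtered Cartier duality equivalence of Construction \ref{filteredcartierduality}. No genuinely new computation should be required; the corollary is a matter of packaging.

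First, I would invoke Proposition \ref{gottausethistoo} to regard $Def_{\filstack}(\formalgroup)$ as an abelian cogroup object in $\mathcal{D} \subset \on{CAlg}(\widehat{\on{Fil}}_k)$, i.e.\ as a filtered formal group in the sense of Definition \ref{cogroupdefinition}. Applying the filtered Cartier duality equivalence
\[
(-)^\vee : \on{Ab}(\on{cCAlg}(\on{Fil}_k)^{sm}) \xrightarrow{\ \simeq\ } \on{CoAb}(\mathcal{D})
\]
of Construction \ref{filteredcartierduality} then produces an abelian group object $Def_{\filstack}(\formalgroup)^\vee$ in smooth filtered coalgebras. As in the remark following Construction \ref{filteredcartierduality}, such an object corresponds to a relatively affine commutative group scheme over $\filstack$, which I will abusively also denote $Def_{\filstack}(\formalgroup)^\vee$.

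Next I would form the sheaf cohomology $R\Gamma(Def_{\filstack}(\formalgroup)^\vee, \OO)$. Since $Def_{\filstack}(\formalgroup)^\vee$ is relatively affine over $\filstack$, this is a well-defined object of $\QCoh(\filstack)$, which under the symmetric monoidal equivalence $\QCoh(\filstack) \simeq \on{Fil}_k$ becomes a filtered $k$-module. This filtered $k$-module is the canonical filtration we seek. To verify that its underlying object is $R\Gamma(\formalgroup^\vee, \OO)$, I would appeal to the base-change result of Proposition \ref{morebasechangeshiiiii} along the generic point $1:\Spec k \to \filstack$, combined with compatibility of Cartier duality with base change (cf.\ the analogue of Proposition \ref{basechange} in the filtered setting, which itself follows from Proposition \ref{fullyfaithfultobecited} and the fact that the cospectrum and dual constructions commute with base change). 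The generic fiber of $Def_{\filstack}(\formalgroup)$ is $\formalgroup$ itself, so its filtered Cartier dual pulls back to $\formalgroup^\vee$, and taking $R\Gamma$ commutes with this pullback.

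The only subtle point, and the step I expect to require the most care, is ensuring that filtered Cartier duality is genuinely compatible with pullback along $1: \Spec k \to \filstack$ (passage to underlying objects) so that the fiber at $1$ of the filtered cohomology coincides with ordinary cohomology of $\formalgroup^\vee$. This reduces, via Proposition \ref{compatibilityofdual}, to the fact that the weak dual of a smooth filtered coalgebra commutes with $\on{Und}(-)$, together with the fact that $\on{Und}(-) \simeq 1^*$ is symmetric monoidal and preserves the relevant (co)limits, so that the unfiltered Cartier dual of the generic fiber of $Def_{\filstack}(\formalgroup)$ is $\formalgroup^\vee$. Once this compatibility is in hand, the filtration on $R\Gamma(\formalgroup^\vee, \OO)$ is simply the image under $\QCoh(\filstack) \simeq \on{Fil}_k$ of the structure sheaf cohomology of $Def_{\filstack}(\formalgroup)^\vee$, and its canonicity is inherited from the canonical nature of the deformation-to-the-normal-cone construction.
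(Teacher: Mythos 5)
Your proposal is correct and follows essentially the same route as the paper: use Proposition \ref{gottausethistoo} to regard $Def_{\filstack}(\formalgroup)$ as a filtered formal group, apply the filtered Cartier duality of Construction \ref{filteredcartierduality} to obtain an abelian group object in smooth filtered coalgebras (equivalently, a filtered Hopf algebra, hence a group stack $Def_{\filstack}(\formalgroup)^\vee$ over $\filstack$), and read off the filtration from its structure sheaf cohomology as an object of $\QCoh(\filstack) \simeq \on{Fil}_k$. The only difference is that you explicitly verify, via Proposition \ref{compatibilityofdual} and base change along $1: \Spec k \to \filstack$, that the underlying object is $R\Gamma(\formalgroup^\vee, \OO)$, a point the paper's terser proof leaves implicit.
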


\begin{proof}
By Proposition~\ref{Prop:5.12}, the coordinate algebra $\OO(\Def_{\filstack}(\formalgroup)$ corresponds via duality to an abelian group object  in smooth filtered coalgebras. As we are in the discrete setting, this is equivalent to the structure of a group-like commutative monoid in this category. In particular, this is a filtered Hopf algebra object, so it determines a group stack $\Def_{\filstack}(\formalgroup)^\vee$ over $\filstack$. 
\end{proof}

\section{The deformation to the normal cone of \texorpdfstring{$\boldsymbol{\widehat{\GG_m}}$}{Ghat\textunderscore m}} \label{deformationofGm}

By the above, given any formal group $\widehat{\GG}$, one may define a filtration on its Cartier dual $\formalgroup^\vee = \Map( \widehat{\GG}, \widehat{\GG_m})$ in the sense of \cite{geometryofilt}. In the case of the formal multiplicative group, this gives a filtration on its Cartier dual $(\GG_m)^\vee =\mathsf{Fix}$. In \cite{moulinos2019universal}, the authors defined a geometric filtration on this affine group scheme (defined over a $\Z_{(p)}$-algebra $R$)  given by a certain interpolation between the kernel and fixed points of the Frobenius on the Witt vector scheme. We would like to compare the filtration on $\Map(\widehat{\GG_m}, \widehat{\GG_m})$ with this construction.

\begin{cor} \label{Corollary 6.1}
The geometric filtration defined on $\mathsf{Fix}$ is Cartier dual to the $(x)$-adic filtration on 
$$
\OO(\widehat{\GG_m}) \simeq R[[x]].
$$
Furthermore, this filtration corresponds to the 
deformation to the normal cone construction  
$\Def_{\filstack}(\widehat{\GG_m})$  on $\formalgroup_m$ and coincides with the filtration of\, \cite{sekiguchi2001note}.
\end{cor}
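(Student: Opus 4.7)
The plan is to assemble this corollary as a formal consequence of the preceding results in Sections~\ref{filteredformalgroupsection} and \ref{deformationsection}, together with the theorem identifying $\mathbb{H}$ as the filtered Cartier dual of $Def_{\filstack}(\widehat{\GG_m})$. First I would invoke Proposition~\ref{gottausethistoo}, which asserts that $Def_{\filstack}(\widehat{\GG_m})$ is a filtered formal group whose pullback along $\Spec k \to \filstack$ recovers $\widehat{\GG_m}$ and whose pullback along $B\GG_m \to \filstack$ recovers $\widehat{T_{\GG_m|k}} \simeq \widehat{\GG_a}$. In particular, its underlying complete filtered coordinate algebra has underlying object $\OO(\widehat{\GG_m}) \simeq k[[x]]$.

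Next, I would pin down the filtration explicitly. The unit section $\filstack \to Def_{\filstack}(\widehat{\GG_m})$ coming from the construction of Section~\ref{deformationsection} provides an inclusion of $A_1$ into the augmentation ideal $(x) \subset k[[x]]$, while the special-fiber computation in Proposition~\ref{specialfiberformalgroup} identifies the associated graded with $\on{Sym}_{gr}^*(M(1))$ for $M$ a free $k$-module of rank one. This places us precisely in the situation of Proposition~\ref{unicityalgebras} (i.e.\ Theorem~\ref{introadicunicity}), so the filtration on $\OO(Def_{\filstack}(\widehat{\GG_m}))$ must equal the $(x)$-adic filtration $F^*_{(x)} k[[x]]$. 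By Corollary~\ref{uniqueformalgroupstructure}, the comultiplication dual to the formal multiplicative group structure refines uniquely to a filtered comultiplication, so this equality is an equality of filtered formal groups in $\mathcal{D}$.

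The remaining step is to apply the filtered Cartier duality correspondence of Construction~\ref{filteredcartierduality} and compare to $\mathsf{Fix}$. By the preceding theorem of Section~\ref{deformationofGm} (the identification $\mathbb{H} \simeq Def_{\filstack}(\widehat{\GG_m})^\vee$), the filtered group scheme over $\filstack$ arising as the filtered Cartier dual of $Def_{\filstack}(\widehat{\GG_m})$ is precisely the group scheme $\mathbb{H}$ of \cite{moulinos2019universal}; its generic fiber $1^*\mathbb{H}$ is $\mathsf{Fix}$ and its associated graded $0^*\mathbb{H}$ is $\mathsf{Ker}$. Compatibility of the filtered Cartier duality functor with passage to the underlying object and to the associated graded (which ultimately reduces to the base-change compatibility of classical Cartier duality, Proposition~\ref{basechange}, applied along $\Spec k \to \filstack$ and $B\GG_m \to \filstack$) therefore identifies the filtration on $\mathsf{Fix}$ with the Cartier dual of the $(x)$-adic filtration on $k[[x]]$.

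The main subtlety I expect is verifying that the duality functor of Construction~\ref{filteredcartierduality} interchanges the two distinguished fibers in the expected way: one must check that base-changing along $0,1\colon \Spec k, B\GG_m \to \filstack$ commutes with the formation of $(-)^\vee$ in the smooth filtered setting. This is essentially a compactness/dualizability argument using that smooth filtered coalgebras are built out of the derived divided power functors $\Gamma^n_{fil}$, which are themselves compatible with $\on{Und}$ and $\gr$; once granted, the resulting commuting squares of Cartier duals give the required matching of $\mathsf{Fix}$ and $\mathsf{Ker}$ with the two fibers of the filtered Cartier dual, and the corollary follows.
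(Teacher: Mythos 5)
Your handling of the $Def_{\filstack}(\widehat{\GG_m})$ side is fine and matches the paper: the unit section plus the special-fiber computation puts $\OO(Def_{\filstack}(\widehat{\GG_m}))$ in the scope of Proposition \ref{unicityalgebras}, and Corollary \ref{uniqueformalgroupstructure} upgrades the identification with $F^*_{(x)}k[[x]]$ to one of filtered formal groups. The gap is in how you connect this to $\mathsf{Fix}$. You invoke ``the preceding theorem of Section \ref{deformationofGm}, the identification $\mathbb{H} \simeq Def_{\filstack}(\widehat{\GG_m})^\vee$,'' as an input. No such independent theorem exists: that identification is stated as a theorem only in the introduction, as a preview of precisely the corollary you are asked to prove, and Section \ref{deformationofGm} contains no result other than this corollary. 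As written, your argument is circular — you derive the comparison between the filtration on $\mathsf{Fix}$ and the Cartier dual of the adic filtration from the very equivalence $\mathbb{H} \simeq Def_{\filstack}(\widehat{\GG_m})^\vee$ that constitutes the ``furthermore'' clause of the statement.

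What is actually needed, and what the paper supplies, is an independent analysis of the filtered Hopf algebra $\OO(\mathbb{H})$ coming from the Sekiguchi--Suwa family $\widehat{\mathcal{G}_t}$ with group law $F(X,Y)=X+Y+tXY$ and Cartier duals $\ker(F - t^{p-1}\id : \W_p \to \W_p)$. One identifies the underlying object of $\OO(\mathbb{H})$ as the divided power coalgebra $\bigoplus_n \Gamma^n(k)$ and its associated graded as $\OO(\mathsf{Ker})$, the free divided power algebra graded so that $x^n/n!$ has weight $-n$; by weight reasons this forces $\OO(\mathbb{H}) \simeq \bigoplus_n \Gamma^n(k^f(-1))$ as a smooth filtered coalgebra. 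Only then does its dual $\widehat{\on{Sym}^*_{fil}(k^f(1))}$ land in the hypotheses of Proposition \ref{unicityalgebras}, giving the adic filtration on $k[[x]]$, and uniqueness of the filtered formal group structure (Corollary \ref{uniqueformalgroupstructure}) lets one match it with $Def_{\filstack}(\widehat{\GG_m})$. This explicit identification is the substantive content linking the Witt-vector filtration of \cite{moulinos2019universal} to the abstract filtered Cartier duality framework, and it cannot be replaced by a citation to the conclusion. Your closing concern about compatibility of $(-)^\vee$ with $\on{Und}$ and $\gr$ is legitimate but secondary; it is handled by Proposition \ref{compatibilityofdual} and does not repair the circularity.
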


\begin{proof}
Let 
$$
\mathcal{G}_t= \Spec(R[X,1/(1 + tX))]. 
$$
This is an affine group scheme, with multiplication given by 
$$
X \longmapsto X \otimes 1 + 1 \otimes X + t X \otimes X; 
$$
one sees by varying the parameter $t$ that this is naturally defined over $\AAA^1$. If $t$ is invertible, then this is equivalent to $\GG_m$; if $t=0$, this is just the formal additive group ${\GG_a}$.  If we take the formal completion of this at the  unit section, we obtain a formal group $\widehat{\mathcal{G}_{t}}$, with corresponding formal group law
\begin{equation}
F(X,Y)= X + Y + tXY,     
\end{equation}
which we may think of as a formal group over $\AAA^1$. In \cite{sekiguchi2001note}, the authors describe the Cartier dual of the resulting formal group, for every $t  \in R$, as the  group scheme 
$$
\ker \left(F - t^{p-1}  \id\colon \W_p \lra \W_p\right). 
$$
These assemble, by way of the natural $\GG_m$-action on the Witt vector scheme $\W$,  to give a filtered group scheme $\mathbb{H} \to \filstack$, \textit{cf}. \cite[Definition 2.3.7]{moulinos2019universal}, whose classifying stack is the filtered circle. The algebra of functions $\OO(\mathbb{H})$ acquires  a comultiplication; by results of \cite{geometryofilt}, we may think of this as a filtered Hopf algebra.

Let us identify this filtered Hopf algebra a bit further; by abuse of notation, we refer to it as $\OO(\mathbb{H})$. After passing to  underlying objects, it is the divided power coalgebra $\bigoplus \Gamma^n(R)$. The algebra structure on this comes from the multiplication on $\widehat{\GG_m}$, via Cartier duality. On the graded side, we have the coordinate algebra of $\mathsf{Ker}$, which by \cite[Lemma 3.2.6]{drinfeld2020prismatization} is none other than the free divided power algebra 

$$
R \langle x \rangle \cong R\left[x, \frac{x^2}{2!},\ldots\right]. 
$$
One gives this the grading where each $\frac{1}{n!}x^n$ is of pure weight $-n$. The underlying graded smooth coalgebra is 
$$
\bigoplus_{n}\Gamma_{\gr}(R(-1)). 
$$
We deduce by weight reasons that there is an equivalence of filtered coalgebras
$$
\OO(\mathbb{H}) \simeq \bigoplus_n \Gamma^n\left(R^{f}(-1)\right), 
$$
where $R^{f}(-1)$
is trivial in filtering degrees $n > 1 $ and equal to $R$ otherwise. 

The consequence of the analysis of the above paragraph is that the Hopf algebra structure on $\OO(\mathbb{H})$ corresponds to the data of an abelian group object in smooth filtered coalgebras; \textit{cf.} Section~\ref{filteredformalgroupsection}. In other words, this is a filtered formal group, which is uniquely determined by its underlying formal group by Corollary~\ref{Cor:5.13}. In this case, it will be uniquely determined by $\widehat{\GG_m}$.  

Now we relate this to the deformation to the normal cone construction applied to $\widehat{\GG_m}$, which also outputs a  filtered formal group. Indeed, by the reasoning of Proposition~\ref{Prop:5.12}, this filtered formal group is itself given by the adic filtration  on $R[[t]]$ together with the filtered coalgebra structure uniquely determined by the group structure on $\widehat{\GG_m}$. 
\end{proof}

\section{\texorpdfstring{$\boldsymbol{\formalgroup}$}{Ghat}-Hochschild homology} \label{additionstostory}
As an application to the above deformation to the normal cone constructions associated to a formal group, we further somewhat the following proposal of \cite{moulinos2019universal} described in the introduction.

\begin{const}
Let $R$ be a $\Z_{(p)}$-algebra. Let $\formalgroup$ be a formal group over $R$. Its Cartier dual $\formalgroup^\vee$ is an affine commutative group scheme. We let $B \formalgroup^\vee$ denote the classifying stack of the group scheme $\formalgroup^\vee$. Let $X = \Spec(A)$ be an affine derived scheme, corresponding to a simplicial commutative $R$-algebra $A$. One forms the derived mapping stack
$$
\Map_{\dStk_R}\left(B \formalgroup^\vee, X\right).
$$
\end{const}

If $\formalgroup= \widehat{\GG_m}$, then by the affinization techniques of \cite{toen2006champs, moulinos2019universal}, one recovers, at the level of global sections,   
$$
R\Gamma\left(\Map_{\dStk_{R}}\left(B \widehat{\GG_m}^\vee, X\right), \OO \right) \simeq \on{HH}(A / R),
$$
the Hochschild homology of $A$ as the global sections of this construction. Following this example one can make the following definition (\textit{cf}. \cite[Section 6.3]{moulinos2019universal}). 

\begin{defn}
Let $\formalgroup$ be a formal group over $R$. Let 
$$
\on{HH}^{\formalgroup}\colon \on{sCAlg}_R \lra \Mod_R
$$
be the functor defined by 
$$
\on{HH}^{\formalgroup}(A) := R\Gamma\left(\Map_{\dStk}\left(B \formalgroup^\vee, X\right), \OO\right).
$$
\end{defn}

As was shown in  Section~\ref{Sec:5.2}, given a formal group $\formalgroup$ over a commutative ring $R$, one can apply a deformation to the normal cone construction to obtain a formal group $\Def_{\filstack}(\formalgroup)$  over $\AAA^1 / \GG_m$. By applying $\filstack$-parametrized Cartier duality, one obtains a group scheme over $\filstack$.

\begin{thm} \label{Thm:7.3}
Let $\formalgroup$ be an arbitrary formal group. The functor 
$$
\on{HH}^{\formalgroup}(-)\colon \on{sCAlg}_R \lra \Mod_R
$$
admits a refinement to the $\infty$-category of filtered $R$-modules
$$
\widetilde{\on{HH}^{\formalgroup}(-)}\colon \on{sCAlg}_R \lra \Mod_R^{\on{filt}}
$$
such that
$$
\on{HH}^{\formalgroup}(-) \simeq \colim_{(\Z, \leq)}\widetilde{\on{HH}^{\formalgroup}(-)}. 
$$
\end{thm}

\begin{rem}
We remark that for a $1$-dimensional $\formalgroup$, one recovers the de Rham algebra $\on{Sym}(\mathbb{L}_{A|R}[1])$ as the associated graded. Thus, the difference between $\on{HH}^{\formalgroup}$ in this case and ordinary Hochschild homology will be detected by extensions.  
\end{rem}

\begin{proof}[Proof of Theorem~\ref{Thm:7.3}]
Let $\Def_{\filstack}( \formalgroup)^\vee$ be the Cartier dual of the deformation to the normal cone $\Def_{\filstack}(\formalgroup)$. Form the mapping stack
$$
\Map_{\dStk_{/\filstack}}\left(B \Def_{\filstack}\left( \formalgroup\right)^\vee, X \times \filstack\right).
$$
This base changes along the map 
$$
1\colon \Spec(R) \lra \filstack
$$
to the mapping stack 
$$
\Map_{\dStk_R}\left(B \formalgroup^\vee, X\right),
$$
which gives the desired  geometric refinement. The stack $\Map_{\dStk_{/\filstack}}(B \Def_{\filstack}( \formalgroup)^\vee, X \times \filstack)$ is a derived scheme relative to the base $\filstack$. Indeed, it is nilcomplete and infinitesimally cohesive, and it admits an obstruction theory by the arguments of \cite[Section 2.2.6.3]{toen2008homotopical}. Finally, its truncation is the relative scheme $t_0 X \times \filstack$ over $\filstack$---this follows from the identification
$$
t_0 \Map\left( B \formalgroup^\vee, X\right) \simeq t_0 \Map\left(B \formalgroup^\vee, t_0 X\right)
$$
and from the fact that there are no nonconstant (nonderived)
maps $BG \to t_0 X$ for $G$ a group scheme. 

Hence  we conclude by the criteria of \cite[Theorem C.0.9]{toen2008homotopical} that this is a relative affine derived scheme. Since $\mathcal{L}_{\fil}^{\formalgroup}(X) \to \filstack$ is a relative affine derived scheme,  we conclude that 
$\mathcal{L}_{\fil}^{\formalgroup}(X) \to \filstack$ is of finite cohomological dimension, and so by Proposition~\ref{Prop:2.1}, $\widetilde{\on{HH}^{\formalgroup}(A)}$ defines an exhaustive filtration on $\on{HH}^{\formalgroup}(A)$. 
\end{proof}

\begin{rem}
In characteristic zero, all $1$-dimensional formal groups are equivalent to the additive formal group $\widehat{\GG_a}$, via an equivalence with its tangent Lie algebra. In particular, the above filtration splits canonically; one obtains an equivalence of derived schemes 
$$
\Map_{\dStk}\left(B \formalgroup^\vee, X\right) \simeq \mathbb{T}_{X|R}[-1]. 
$$
\end{rem}

In positive or mixed characteristic, this is of course not true. However, one can view all these theories as deformations along the map $B \GG_m \to \filstack $ of the de Rham algebra $\on{DR}(A)= \on{Sym}(\mathbb{L}_{A|R}[1])$. 

\section{Liftings to spectral deformation rings} \label{spectralll}

In this section, we lift the above discussion to the setting of spectral algebraic geometry over various ring spectra that parametrize \emph{deformations} of formal groups. These are defined in \cite{ellipticII} in the context of elliptic cohomology theory. As we will be switching gears now and working in this setting, we will spend some time recalling and slightly clarifying some of the ideas in \cite{ellipticII}. Namely, we introduce a correspondence between formal groups over $E_{\infty}$-rings and spectral affine group schemes, and we show it to be compatible with Cartier duality in the classical setting. We stress that the necessary ingredients already appear in \cite{ellipticII}. 

\subsection{Formal groups over the sphere}\label{sec8.1} 
We recall various aspects of the treatment of formal groups in the setting of spectra and spectral algebraic geometry. The definition is based on the notion of smooth coalgebra studied in Section~\ref{ogdiscussion}. 
In particular, the results of this section are generalizations to spectral algebraic geometry of the ideas of Sections 3.2 and 3.3. 

\begin{defn}
Fix an arbitrary $E_{\infty}$-ring $R$, and let $C$ be a coalgebra over $R$.   Recall that this means that $C \in \on{CAlg}(\Mod_R^{\op})^{\op}$. Then $C$ is smooth if it is flat as an $R$-module and if $\pi_0 C$ is  smooth as a  coalgebra over $\pi_0(R)$, as in Definition~\ref{smoothcoalgoriginal}.
\end{defn}

Given an arbitrary coalgebra $C$ over $R$, the linear dual $C^\vee =\Map(C, R)$ acquires a canonical $E_{\infty}$-algebra structure. In general, $C$ cannot be recovered from $C^\vee$. However, in the smooth case, the dual $C$ acquires the additional structure of a topology on $\pi_0$,  giving it the structure of an adic $E_{\infty}$-algebra. This allows us to recover $C$, via the following proposition; \textit{cf}. \cite[Theorem 1.3.15]{ellipticII}. 

\begin{prop} \label{spectralduality}
Let $C, D \in \on{cCAlg}^{\sm}_R$ be smooth coalgebras. Then $R$-linear duality induces a homotopy equivalence
$$
\Map_{\on{cCAlg}_R}(C, D) \simeq \Map^{\on{cont}}_{\on{CAlg}_R}\left(C^\vee, D^\vee\right).
$$
\end{prop}

\begin{rem}
One can go further and characterize intrinsically all adic $E_{\infty}$-algebras that arise as duals of smooth coalgebras. These (locally) have a formal power series ring as underlying  homotopy groups. 
\end{rem}

\begin{const}
Given a coalgebra $C \in \on{cCAlg}_R$, one may define a functor
$$
\on{cSpec}(C)\colon \on{CAlg}_R^{\cn} \lra \mathcal{S};
$$
this associates, to a connective $R$-algebra $A$, the space of group-like elements 
$$
\on{GLike}(A \otimes_{R}C) = \Map_{\on{cCAlg}_A}(A, A \otimes_R C).
$$
\end{const}

\begin{rem}
Fix a smooth coalgebra $C$. There is always a canonical map of stacks $\on{coSpec}(C) \to \Spec(A)$, where $A= C^\vee$, but it is typically not an equivalence. The condition that $C$ is smooth guarantees precisely that there is an induced equivalence $\on{coSpec}(C) \to \on{Spf}(A) \subseteq \Spec(A)$, where $\on{Spf}(A)$ denotes the formal spectrum of the adic $E_{\infty}$-algebra $A$. In particular, $\on{coSpec}(C)$ is a formal scheme in the sense of \cite[Chapter 8]{lurie2016spectral}.
\end{rem}

One has the following proposition, to be compared with Proposition~\ref{fullyfaithfultobecited}. 

\begin{prop}[Lurie]
Let $R$ be an $E_{\infty}$-ring. Then the construction $C \mapsto \on{cSpec}(C)$ induces a fully faithful embedding of $\infty$-categories
$$
\on{cCAlg}_{R}^{\sm} \lra \on{Fun}(\on{CAlg}^{\cn}_R, \mathcal{S}).
$$
\end{prop}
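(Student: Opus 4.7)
The plan is to reduce the full faithfulness assertion to the duality between smooth coalgebras and adic $E_\infty$-algebras already established in the preceding proposition, via the identification $\on{cSpec}(C)\simeq\on{Spf}(C^\vee)$ noted in the remark immediately above.

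First I would identify the functor of points of $\on{cSpec}(C)$ with that of $\on{Spf}(C^\vee)$. For any connective $A\in\on{CAlg}_R^{\on{cn}}$, I would unwind the definition
\[
\on{cSpec}(C)(A)=\on{Map}_{\on{cCAlg}_A}(A, A\otimes_R C)
\]
and, using that $A\otimes_R C$ remains a smooth coalgebra over $A$ whose dual is $(A\otimes_R C)^\vee\simeq A\,\widehat{\otimes}_R C^\vee$, invoke the $R$-linear duality statement of the preceding proposition to identify
\[
\on{Map}_{\on{cCAlg}_A}(A, A\otimes_R C)\simeq \on{Map}_{\on{CAlg}_R}^{\on{cont}}(C^\vee, A).
\]
The right-hand side is precisely the value of $\on{Spf}(C^\vee)$ on $A$, in the sense of formal spectra of adic $E_\infty$-rings developed in \cite[Chapter 8]{lurie2016spectral}. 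This produces a natural equivalence $\on{cSpec}(C)\simeq \on{Spf}(C^\vee)$ of functors on $\on{CAlg}_R^{\on{cn}}$.

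Next I would compute the mapping spaces on the target side. By Yoneda applied to the inclusion of formal spectra into $\on{Fun}(\on{CAlg}_R^{\on{cn}},\mathcal{S})$, one gets
\[
\on{Map}\bigl(\on{cSpec}(C),\on{cSpec}(D)\bigr)\simeq \on{Map}\bigl(\on{Spf}(C^\vee),\on{Spf}(D^\vee)\bigr)\simeq \on{Map}^{\on{cont}}_{\on{CAlg}_R}(D^\vee,C^\vee),
\]
where the second equivalence is the content of Lurie's fully faithful embedding of adic $E_\infty$-algebras into formal stacks in SAG. Applying the preceding proposition once more, this time contravariantly, identifies $\on{Map}^{\on{cont}}_{\on{CAlg}_R}(D^\vee,C^\vee)$ with $\on{Map}_{\on{cCAlg}_R}(C,D)$, and one checks that the resulting equivalence is induced by $\on{cSpec}(-)$ itself.

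The main technical obstacle is the second step, i.e.\ checking that $\on{Spf}$ is fully faithful on smooth-dual adic $E_\infty$-algebras when regarded as a functor into $\on{Fun}(\on{CAlg}_R^{\on{cn}},\mathcal{S})$. The issue is purely one of continuity: a natural transformation of the functors of points must be shown to arise from a genuinely continuous map of the underlying adic $E_\infty$-rings, rather than merely a $\pi_0$-continuous map or an arbitrary $E_\infty$-map. Here one uses that the adic algebras arising as duals of smooth coalgebras are pro-nilpotent thickenings of $R$ with underlying homotopy locally a power series ring, so that every $E_\infty$-algebra map into a connective test ring $A$ automatically factors through a truncation; compactness of the truncated quotients then provides the naturality and inverts the canonical map. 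Granting this, the three identifications above compose to give the desired equivalence
\[
\on{Map}_{\on{cCAlg}^{\on{sm}}}(C,D)\xrightarrow{\sim}\on{Map}\bigl(\on{cSpec}(C),\on{cSpec}(D)\bigr),
\]
establishing full faithfulness.
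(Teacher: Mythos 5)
Your plan is correct and follows essentially the same route the paper takes: the paper's entire justification (given for the parallel discrete-ring version) is that a smooth coalgebra can be recovered from its adic dual, i.e.\ that $\on{cSpec}(C)\simeq\on{Spf}(C^\vee)$ and full faithfulness is inherited from linear duality together with the full faithfulness of $\on{Spf}$ on complete adic $E_\infty$-rings. You have simply expanded that one-line argument into its constituent steps (including the continuity issue for $\on{Spf}$, which is exactly the point the paper defers to Lurie), so there is nothing to correct.
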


This facilitates the following definition of a formal group in the setting of spectral algebraic geometry. 

\begin{defn}
A functor $X\colon \on{CAlg}^{\cn}_R \to \mathcal{S}$ is a formal hyperplane if it is in the essential image of the $\on{coSpec}$ functor; we use the notation $\on{HypPlane}_R$ to denote the $\infty$-category of such objects. We now define a formal group to be an abelian group object in formal hyperplanes, namely an object of $\on{Ab}(\on{HypPlane}_R)$. 
\end{defn}

As is evident from the thread of the above construction, one may define a formal group to be a certain type  of Hopf algebra, but in a somewhat strict sense. Namely, we can define a formal group to be an object of $\on{Ab}(\on{cCAlg}^{\sm})$, that is, an abelian group object in the $\infty$-category of smooth coalgebras. We refer to these as \emph{strict Hopf algebras}. 

\begin{rem}
The monoidal structure on $\on{cCAlg}_R$ induced by the underlying smash product of $R$-modules is Cartesian; in particular, it is given by the product in this $\infty$-category. Hence a ``commutative monoid object'' in the category of $R$-coalgebras  will be a coalgebra that is additionally equipped with the structure of an $E_{\infty}$-algebra. In particular, it will be a bialgebra. 
\end{rem}

\begin{const} \label{Constr:8.9}
Let $\formalgroup$ be a formal group over an $E_{\infty}$-algebra $R$. Let $\H$  be a strict Hopf algebra $\H$ for which 
$$
\on{coSpec}(\H) = \formalgroup.
$$
Let 
$$U\colon \on{Ab}(\on{cCAlg}_R) \lra \on{CMon}(\on{cCAlg}_R)$$
be the forgetful functor from abelian group objects to commutative monoids. Since the monoidal structure on  $\on{cCAlg}_R$ is Cartesian, the structure of a commutative monoid in $\on{cCAlg}_R$ is that of a commutative algebra on the underlying $R$-module, and so we may view such an object as a bialgebra in $\Mod_{R}$. Finally, we apply $\Spec(-)$ (the spectral version) to this bialgebra to obtain a group object in the category of spectral schemes. This is what we refer to as the \emph{Cartier dual} $\formalgroup^{\vee}$ of $\formalgroup$.   
\end{const}

\begin{rem}
The above just makes precise, for a strict Hopf algebra $\H$ (\textit{i.e.}, an abelian group object), the association 
$$
\on{Spf}(H^\vee) \simeq \on{coSpec}(H) \longmapsto \Spec(H). 
$$
Unlike the $1$-categorical setting studied so far, there is no equivalence underlying this, as passing between abelian group objects to commutative monoid objects loses information; hence this is not a duality in the precise sense. In particular, it is not clear how to obtain a spectral formal group from a group-like commutative monoid in schemes, even if the underlying coalgebra is smooth. 
\end{rem}

\begin{prop} \label{basechangeinspectralsetting}
Let $R \to R'$ be a morphism of\, $E_{\infty}$-rings, and let $\formalgroup$ be a formal group over $\Spec(R)$ and $\formalgroup_{R'}$ its extension to $R'$. Then Cartier duality satisfies base change, so that there is an equivalence
$$
D\left(\formalgroup|_R'\right) \simeq D\left(\formalgroup\right)|_R.  
$$
\end{prop}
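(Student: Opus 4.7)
The plan is to unwind the definition of Cartier duality from Construction \ref{justlabelyourshit} and verify that each of its three steps---presenting $\formalgroup$ as $\on{coSpec}$ of a strict Hopf algebra, forgetting along $U$ to a bialgebra, and passing to $\Spec$---commutes with base change along $R \to R'$. Composing the three compatibilities then gives the desired equivalence $D(\formalgroup_{R'}) \simeq D(\formalgroup)|_{R'}$.

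First, I would fix a strict Hopf algebra $H \in \on{Ab}(\on{cCAlg}^{sm}_R)$ with $\on{coSpec}(H) \simeq \formalgroup$, using the fully faithful embedding of smooth coalgebras into functors on $\on{CAlg}_R^{cn}$. Extension of scalars $(-) \otimes_R R' \colon \on{cCAlg}_R \to \on{cCAlg}_{R'}$ is a symmetric monoidal left adjoint, and by the structural description of smooth coalgebras locally as divided power coalgebras on a finitely generated projective module (which is clearly stable under base change), it restricts to a functor $\on{cCAlg}^{sm}_R \to \on{cCAlg}^{sm}_{R'}$. Since grouplike elements satisfy base change in $A$, one obtains a natural equivalence $\on{coSpec}(H \otimes_R R') \simeq \formalgroup_{R'}$ of formal groups over $R'$.

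Second, I would observe that the forgetful functor $U\colon \on{Ab}(\on{cCAlg}_R) \to \on{CMon}(\on{cCAlg}_R)$ is compatible with base change, because any symmetric monoidal left adjoint preserves both abelian group objects and commutative monoid objects and intertwines the forgetful functors between the corresponding $\infty$-categories. Thus, if $\bar H$ denotes the bialgebra underlying $H$, then $\bar H \otimes_R R'$ is the bialgebra underlying $H \otimes_R R'$. Finally, the relative $\Spec$ functor in spectral algebraic geometry satisfies base change: for an $E_\infty$-algebra $A$ over $R$, there is a natural equivalence $\Spec_{R'}(A \otimes_R R') \simeq \Spec_R(A) \times_{\Spec R} \Spec R'$ of spectral schemes, compatible with commutative monoid structures inherited from a bialgebra structure on $A$. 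Chaining these three compatibilities gives
\[
D(\formalgroup_{R'}) \simeq \Spec_{R'}(\bar H \otimes_R R') \simeq \Spec_R(\bar H) \times_{\Spec R} \Spec R' \simeq D(\formalgroup)|_{R'}.
\]

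The main obstacle is the first step: verifying that extension of scalars genuinely preserves the class of smooth coalgebras, not merely flat ones, and that this preservation is compatible with the identification $\on{coSpec}(H \otimes_R R') \simeq \formalgroup_{R'}$. Both points should reduce, via the local structural result realizing smooth coalgebras as $\bigoplus_{n \geq 0} \Gamma^n(M)$ for $M$ a finitely generated projective $\pi_0 R$-module, to the compatibility of divided powers and finitely generated projectives with base change. A secondary technical point is to confirm that the forgetful comparison between strict abelian group objects and grouplike commutative monoids is natural in $R$; this is formal once one works in the $\infty$-categorical framework of \cite{ellipticII}, but it is the step where the slight asymmetry noted in the remark following Construction \ref{justlabelyourshit} (the loss of information when passing from $\on{Ab}$ to $\on{CMon}$) requires careful bookkeeping.
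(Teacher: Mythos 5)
Your proposal is correct and follows essentially the same route as the paper: both arguments reduce the claim to the fact that the smooth (co)algebra presenting $\formalgroup$ is compatible with extension of scalars, and that the relevant functors ($\on{coSpec}$, the forgetful functor $U$, and relative $\Spec$) preserve finite products and hence group structures. The paper's proof is simply a compressed version phrased in terms of the linear duality $A \mapsto A^{\vee}$ commuting with base change (citing \cite[Remark 1.3.5]{ellipticII}), whereas you carry the coalgebra $H$ through each stage explicitly; the content is the same.
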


\begin{proof}
Let $\formalgroup = \on{Spf}(A)$ be a formal group corresponding to the adic $E_{\infty}$-ring $A$. Then the Cartier dual is given by $\Spec(\mathcal{H})$ for $\mathcal{H}= A^\vee$, the linear dual of $A$ which is a smooth coalgebra. The linear duality functor $(-)^\vee= \Map_R(-,R)$---for example by \cite[Remark 1.3.5]{ellipticII}---commutes with base change and is an equivalence between smooth coalgebras and their duals. Moreover, it preserves finite products and so can be upgraded to a functor between abelian group objects. 
\end{proof}

\subsection{Deformations of formal groups}
Let us recall the definition of a deformation of a formal group. These are all standard notions. 

\begin{defn}
Let $\widehat{\GG_0}$ be formal group defined over a finite field of characteristic $p$. Let $A$ be a complete Noetherian ring equipped with a  ring homomorphism $\rho\colon A \to k$, further inducing an isomorphism $A / \mathfrak{m} \cong k$. A deformation of $\widehat{\GG_0}$ along $\rho$ is a pair $(\widehat{\GG}, \alpha)$, where $\widehat{\GG}$ is a formal group over $A$ and $\alpha\colon \widehat{\GG_0} \to \widehat{\GG}|_k$ is an isomorphism of formal groups over $k$.  
\end{defn}

The data  $(\widehat{\GG}, \alpha)$ can be organized into a category $\on{Def}_{\widehat{\GG_0}}(A)$. 
The following classic theorem due to Lubin and Tate asserts that there exists a universal deformation, in the sense that there is a ring which corepresents the functor $A \mapsto \on{Def}_{\widehat{\GG_0}}(A)$. 

\begin{thm}[Lubin--Tate]
Let $k$ be a perfect field of characteristic $p$, and let $\widehat{\GG_0}$ be a $1$-dimensional formal group of height $n < \infty$ over $k$. Then there exist a complete local Noetherian ring $R^{\cl}_{\formalgroup}$, a ring homomorphism
$$
\rho\colon R^{\cl}_{\formalgroup} \lra k
$$
inducing an isomorphism $R^{\cl}_{\formalgroup}/ \mathfrak{m} \cong k$, and a deformation $(\widehat{\GG}, \alpha)$ along $\rho$ with the following universal property:
for any other complete local ring $A$ with an isomorphism $A \cong A / \mathfrak{m}$, extension of scalars induces  an equivalence 
$$
\on{Hom}_{k}\left(R^{\cl}_{\formalgroup}, A\right) \simeq \on{Def}_{\widehat{\GG_0}}(A, \rho)
$$
$($here, we regard the right-hand side as a category with only identity morphisms$)$. 
\end{thm}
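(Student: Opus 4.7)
The plan is to reduce the problem to the classification of $p$-typical formal group laws and then argue by ``propagating'' the deformation data through the universal Hazewinkel parameters. First I would choose a coordinate on $\formalgroup_0$, writing it as a formal group law $F_0(x,y) \in k[[x,y]]$, and recall that since $k$ is perfect of characteristic $p$ every formal group law over a $\Z_{(p)}$-algebra is canonically strictly isomorphic (via the Cartier typification) to a unique $p$-typical one. So after replacing $F_0$ by its $p$-typification, we may assume $F_0$ is $p$-typical and thus classified by a continuous map $V = \Z_{(p)}[v_1, v_2, \ldots] \to k$ from the Hazewinkel ring. The height $n$ hypothesis translates into the conditions $v_i \mapsto 0$ for $i < n$ and $v_n \mapsto \text{unit}$.

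Next I would construct the candidate ring $R^{cl}_{\formalgroup} := W(k)[[u_1, \ldots, u_{n-1}]]$ together with the obvious surjection to $k$ that kills $p$ and the $u_i$. A deformation $\widetilde F$ of $F_0$ over this ring is obtained by declaring the Hazewinkel parameters to be $v_i = u_i$ for $1 \leq i \leq n-1$, choosing a lift of $v_n$ to a unit of $R^{cl}_{\formalgroup}$, and setting $v_i = 0$ for $i > n$. A choice of strict isomorphism $\alpha_0$ between $F_0$ and the reduction of $\widetilde F$ mod $\mathfrak{m}$ is then just the identity in these coordinates.

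To verify the universal property, given a deformation $(\widehat{\GG}, \alpha)$ over a complete local $A$ with $A/\mathfrak{m}_A \cong k$, I would choose any coordinate on $\widehat{\GG}$ lifting the chosen coordinate on $\formalgroup_0$ (which exists since $\widehat{\GG}$ is formally smooth of dimension one), and then replace it by the unique $p$-typical coordinate compatible with $\alpha$ under Cartier typification. This produces Hazewinkel parameters $\tilde v_i \in A$, and the height $n$ assumption on the special fiber forces $\tilde v_i \in \mathfrak{m}_A$ for $i < n$ and $\tilde v_n$ a unit reducing to the correct value. Sending $u_i \mapsto \tilde v_i$ defines a continuous local homomorphism $R^{cl}_{\formalgroup} \to A$ which by construction classifies the given deformation. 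Uniqueness follows because any two such classifying maps agree on the generators $u_i$, as the $p$-typical coordinate and its Hazewinkel parameters are uniquely determined by $(\widehat{\GG}, \alpha)$.

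The main obstacle in this outline is the passage to $p$-typical coordinates in families: one must know that the Cartier typification behaves well for deformations, i.e.~that it lifts canonically from $k$ to $A$ and is compatible with the rigidification by $\alpha$. This is where the hypothesis that $A$ is complete local with residue field $k$ is essential, allowing one to construct the typifying strict isomorphism by successive approximation up the powers of $\mathfrak{m}_A$. Once this rigidification is in hand, everything else is essentially a formal consequence of Hazewinkel's classification of $p$-typical formal group laws together with the definition of height.
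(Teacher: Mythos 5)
First, note that the paper does not actually prove this statement: it is recalled as a classical theorem of Lubin and Tate and used as a black box, so there is no internal proof to compare your argument against. Judged on its own terms, your outline has a genuine gap at its central step. The Hazewinkel parameters $\tilde v_i$ are \emph{not} well-defined invariants of a deformation $(\widehat{\GG},\alpha)$: they depend on the choice of $p$-typical coordinate, and a formal group over $A$ admits many $p$-typical coordinates lifting a fixed one on $\widehat{\GG_0}$, since the strict isomorphisms between $p$-typical formal group laws form a large group (parametrized by the $t_j$'s, here with coefficients in $\mathfrak{m}_A$). Consequently the assignment $u_i \mapsto \tilde v_i$ is not a well-defined map, and your uniqueness argument --- ``the $p$-typical coordinate and its Hazewinkel parameters are uniquely determined by $(\widehat{\GG},\alpha)$'' --- is false as stated. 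Likewise, your universal object fixes $v_n$ to a chosen unit lift and sets $v_i=0$ for $i>n$; an arbitrary deformation will not have Hazewinkel parameters of this shape in a given coordinate, so even exhibiting the classifying map requires first normalizing the coordinate using the $t_j$'s, which reintroduces exactly the well-definedness problem you have not addressed.

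What is missing is the actual content of the Lubin--Tate theorem: (i) rigidity, i.e.\ that a $\star$-isomorphism of deformations (one reducing to the identity over $k$) is unique if it exists --- this is what makes $\on{Def}_{\widehat{\GG_0}}(A)$ a discrete category, it uses $n<\infty$ in an essential way, and it is proved by induction up the powers of $\mathfrak{m}_A$; and (ii) the computation that, modulo the action of strict isomorphisms, the parameters $v_1,\dots,v_{n-1}$ give a free and transitive parametrization of deformations over square-zero extensions (via the symmetric $2$-cocycle lemma or a cotangent-complex computation), after which prorepresentability by $W(k)[[u_1,\dots,u_{n-1}]]$ follows from Schlessinger-type formal arguments. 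Your reduction to $p$-typical formal group laws is a reasonable starting point --- it is essentially how one identifies $\pi_0$ of Morava $E$-theory with the Lubin--Tate ring --- but without (i) and (ii) the proposal establishes neither the existence nor the uniqueness of the classifying map, nor the discreteness of the deformation category asserted in the statement.
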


For the purposes of this text, we can interpret the above as saying that every formal group over a complete local ring $A$ with residue field $k$ can be obtained from the universal formal group over $R^{\cl}_{\formalgroup}$ by base change along the map $R^{\cl}_{\formalgroup} \to A$. We let $\GG^{\un}$ denote the universal formal group over this ring.  

\begin{rem}
As a consequence of the classification of formal groups due to Lazard, one has a description
$$
A_0 \cong W(k)[[v_1,\ldots,v_{n-1}]],
$$
where the map $\rho\colon W(k)[[v_1,\ldots,v_{n-1}]] \to  k $ has kernel the  maximal ideal $\mathfrak{m}= (p, v_1,\ldots, v_{n-1})$. 
\end{rem}

\subsection{Deformations over the sphere } \label{Cartierdualoversphere}
As it turns out, the ring $A_0$ has the special property that it can be lifted to the $K(n)$-local sphere spectrum. To motivate the discussion, we  restate a classic theorem attributed to Goerss, Hopkins and Miller. We first set some notation.

\begin{defn}
Let  $\mathcal{FG}$ denote the category whose 
\begin{itemize}
    \item objects are pairs $(k, \widehat{\GG})$, where $k$ is a perfect field of characteristic $p$ and $\widehat{\GG}$ is a formal group over $k$, 
    \item morphisms from  $(k, \widehat{\GG})$ to  $(k', \widehat{\GG}')$ are  pairs $(f,\alpha)$, where $f\colon k \to k'$ is a ring homomorphism, and $\alpha\colon \widehat{\GG} \cong \widehat{\GG}'$ is an isomorphism of formal groups over $k'$. 
\end{itemize}
\end{defn}

\begin{thm}[Goerss--Hopkins--Miller]
There is a functor
$$
E\colon \mathcal{FG} \lra \on{CAlg}, \quad  \left(k, \widehat{\GG}\right) \longmapsto E_{k, \widehat{\GG}} 
$$
such that for every $(k, \widehat{\GG})$, the following hold: 
\begin{enumerate}
    \item $E_{k, \widehat{\GG}}$ is even periodic.
    \item The corresponding formal group over $\pi_0 E_{k, \widehat{\GG}}$ is the universal deformation of\, $(k, \widehat{G})$. In particular, $\pi_0 E_{k, \widehat{\GG}} \cong A_0 \cong \W(k)[[v_1,\ldots,v_{n-1}]] $. 
\end{enumerate}
\end{thm}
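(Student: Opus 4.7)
The plan is to first construct, for each pair $(k,\widehat{\GG}) \in \mathcal{FG}$, the spectrum $E_{k,\widehat{\GG}}$ at the level of underlying homotopy type, and then upgrade the construction to an $E_\infty$-ring spectrum functorially in $\mathcal{FG}$. For the first step, one fixes the universal deformation $\widehat{\GG}^{un}$ over the Lubin-Tate ring $A_0 = \W(k)[[v_1,\dots,v_{n-1}]]$ and checks that, viewed as a formal group law over $A_0$, it satisfies the Landweber exactness criterion: the sequence $(p,v_1,\dots,v_{n-1})$ acts regularly on $MU_* \otimes_L A_0$ via the classifying map $MU_* \to A_0$. Inverting a Bott class to obtain a $2$-periodic theory, Brown representability then produces a complex-oriented, even-periodic spectrum $E_{k,\widehat{\GG}}$ with $\pi_0 E_{k,\widehat{\GG}} \cong A_0$, carrying by construction the universal deformation as its associated formal group. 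This settles both (1) and (2) at the level of homotopy.

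The second step is to promote $E_{k,\widehat{\GG}}$ to an $E_\infty$-ring spectrum. This is the content of Goerss-Hopkins obstruction theory: for an even-periodic, Landweber-exact $E_*$, one studies the Dwyer-Kan moduli space $\mathcal{M}_\infty(E)$ of $E_\infty$-refinements of the given homotopy-commutative multiplication, and shows the obstructions to existence and uniqueness lie in certain André-Quillen cohomology groups $H^s_{\mathrm{AQ}}(E_*/MU_*;\,\Omega^t E_*)$ computed in the category of graded-commutative $E_*$-algebras with Dyer-Lashof/power operations. The key input is the vanishing of these groups: because $A_0$ is formally smooth over $\W(k)$ and the classifying map to $\mathcal{M}_{FG}$ is étale at the height $n$ point, the relevant cotangent complex is concentrated in degree zero on a projective module, forcing the higher André-Quillen cohomology — hence all obstruction classes — to vanish.

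For functoriality in $\mathcal{FG}$, given a morphism $(f,\alpha)$ from $(k,\widehat{\GG})$ to $(k',\widehat{\GG}')$, the map $A_0 \to A_0'$ induced on Lubin-Tate rings by universality lifts to an $E_\infty$-map $E_{k,\widehat{\GG}} \to E_{k',\widehat{\GG}'}$ via an entirely parallel mapping-space calculation: the space of $E_\infty$-lifts of a prescribed homotopy-level map is controlled by the same type of André-Quillen cohomology, and the identical vanishing argument shows this space is homotopically discrete with $\pi_0$ equal to the set of maps of homotopy rings compatible with the formal group data. Strictifying these compatible choices via the rectification theorems of Goerss-Hopkins yields the desired functor $E \colon \mathcal{FG} \to \on{CAlg}$.

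The principal obstacle is precisely the obstruction-theoretic calculation in the middle step: setting up the correct simplicial model structure on $E_*$-algebras with power operations, identifying the obstruction groups and the differentials between stages, and deducing their vanishing from the smoothness of the Lubin-Tate deformation functor at $\widehat{\GG_0}$. Existence, uniqueness, and functoriality all reduce by standard arguments to this core computation; this is the technical heart of the original Goerss-Hopkins-Miller result and, for the purposes of this paper, should be cited rather than reproved.
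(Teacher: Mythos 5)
The paper does not prove this statement: it records the Goerss--Hopkins--Miller theorem as a known result, and the remark immediately following it points the reader to the original obstruction-theoretic approach and to \cite[Chapter 5]{ellipticII} for a modern account. Your decision to sketch the argument and then defer the technical core to the literature is therefore entirely consistent with what the paper itself does, and your outline --- Landweber exactness of the universal deformation over $\W(k)[[v_1,\dots,v_{n-1}]]$ to produce the even periodic homotopy-commutative spectrum, followed by Goerss--Hopkins obstruction theory to rigidify it to an $E_\infty$-ring and to show the relevant mapping spaces are homotopy discrete, which is what makes $E$ an honest functor on $\mathcal{FG}$ --- is the correct shape of the classical proof.

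Two imprecisions are worth flagging, since they touch the one place where the argument could actually fail. First, the obstruction groups are not naturally of the form $H^s_{\mathrm{AQ}}(E_*/MU_*;\Omega^t E_*)$; in the Goerss--Hopkins framework they are Andr\'e--Quillen cohomology groups of $E_*E$ (and of the $E_*$-homology of the relevant source spectra) taken in the category of graded commutative $E_*$-algebras equipped with the appropriate power-operation structure. Second, the vanishing is not a consequence of formal \emph{smoothness} of the Lubin--Tate ring over $\W(k)$: smoothness would leave a nontrivial degree-zero cotangent module and hence non-discrete mapping spaces. What one actually uses is that $E_*E$ is (pro-)\'etale over $E_*$ in the relevant sense --- equivalently, that Lubin--Tate theory sits \'etale over the moduli stack of formal groups at the height-$n$ point --- so that the algebraic cotangent complex vanishes outright and all higher obstruction and deformation groups are zero. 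With those corrections your sketch matches the standard proof; as you say, the computation itself is the content of the cited theorem and is appropriately left to the references.
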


\noindent If we let $(l, \formalgroup) = (\F_{p^n}, \Gamma)$,
where $\Gamma$ is the $p$-typical formal group of height $n$, we set 
$$
E_n := E_{\F_{p^n}, \Gamma};
$$
this is the $\supth{n}$ \emph{Morava E-theory}. 

\begin{rem}
The original approach to this uses Goerss--Hopkins obstruction theory. A modern account due to Lurie can be found in \cite[Chapter 5]{ellipticII}.
\end{rem}

\begin{rem} 
As it turns out, this ring $R^{\cl}_{\formalgroup} \cong \pi_0 E_{k, \widehat{\GG}}$ can be thought of as parametrizing oriented deformations of the formal group $\widehat{\mathbb{G}}$. This oriented terminology, introduced in \cite{ellipticII}, roughly means that the formal group in question is equivalent to the Quillen formal group arising from the complex orientation on the base ring. However, there exists an $E_{\infty}$-algebra parametrizing \emph{unoriented deformations} of the formal group over $k$. 
\end{rem}

\begin{thm}[Lurie] \label{deformation}
Let $k$ be a perfect field of characteristic $p$, and let $\widehat{\GG}$ be a formal group of height $n$ over $k$. 
There exist a morphism of connective  $E_{\infty}$-rings
$$
\rho\colon R^{\un}_{\widehat{\GG}} \lra k
$$
and a deformation of\, $\widehat{\GG}$ along $\rho$ with the following properties: 
\begin{enumerate}
    \item $R^{\un}_{\widehat{\GG}}$ is Noetherian,  the induced map $\epsilon\colon \pi_0 R^{\un}_{\widehat{\GG}} \to k  $ is a surjection, and $R^{\un}_{\widehat{\GG}}$ is complete with respect to the ideal  $\ker(\epsilon)$. 
    \item Let $A$ be a Noetherian ring $E_{\infty}$-ring for which the underlying ring homorphism $\epsilon\colon \pi_0(A) \to k$ is surjective and $A$ is complete with respect to the ideal $\ker(\epsilon)$. Then extension of scalars induces an equivalence 
    $$
    \Map_{\on{CAlg}_/k}\left(R^{\un}_{\widehat{\GG}}, A\right) \simeq \on{Def}_{\widehat{\GG}}(A). 
    $$
\end{enumerate}
\end{thm}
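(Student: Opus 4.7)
The strategy is to exhibit $R^{un}_{\widehat{\GG}}$ as the corepresenting object of a formal moduli problem, and then to verify its local and Noetherian properties by analyzing its tangent complex and comparing with the classical Lubin--Tate ring.

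First, I would organize the data of deformations into a functor
\[
\on{Def}_{\widehat{\GG}}: \on{CAlg}^{\on{cn,aug}}_{/k} \to \mathcal{S},
\]
sending an augmented connective $E_\infty$-algebra $(A \twoheadrightarrow k)$ to the space of pairs $(\widehat{\GG}_A, \alpha)$ where $\widehat{\GG}_A$ is a spectral formal group over $A$ (in the sense of the preceding subsection) and $\alpha$ is a trivialization of its restriction $\widehat{\GG}_A|_k \simeq \widehat{\GG}$. The first task is to restrict attention to complete Noetherian $E_\infty$-rings $A$ with $\pi_0(A) \twoheadrightarrow k$ and to check the Schlessinger-type axioms: namely, that $\on{Def}_{\widehat{\GG}}(k) \simeq \ast$ and that $\on{Def}_{\widehat{\GG}}$ is infinitesimally cohesive, i.e.\ sends pullback squares $A \to A_0 \leftarrow A_1$ of augmented $E_\infty$-$k$-algebras along surjective maps on $\pi_0$ with nilpotent kernel to pullbacks of spaces. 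This cohesiveness reduces, via the definition of formal groups as abelian cogroup objects in smooth coalgebras, to the cohesiveness of the $\infty$-category of smooth coalgebras and of abelian group objects therein, both of which are formal consequences of the fact that the relevant limits are computed in modules.

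Next, to produce a corepresenting object, I would invoke Lurie's representability theorem for formal moduli problems (the analogue of Schlessinger--Rim in spectral algebraic geometry, as developed in his \emph{Moduli Problems for Ring Spectra}). This requires computing the tangent complex $T_{\on{Def}_{\widehat{\GG}}}$: by the definition of deformations, one has $\on{Def}_{\widehat{\GG}}(k \oplus k[n]) \simeq \Omega^\infty(T[n])$ for some spectrum $T$, and a direct manipulation with the nilpotent extension $k \oplus k[n] \to k$ identifies $T$ with a shift of the tangent Lie algebra of $\widehat{\GG}$ (concretely, with $\on{Lie}(\widehat{\GG})[-1] \simeq k[-1]$ in the $1$-dimensional case). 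Since this tangent complex has finitely generated homotopy groups in each degree, Lurie's criterion produces a complete adic connective $E_\infty$-ring $R^{un}_{\widehat{\GG}}$ together with a universal deformation along $\rho: R^{un}_{\widehat{\GG}} \to k$ corepresenting $\on{Def}_{\widehat{\GG}}$ on the subcategory of complete Noetherian objects.

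To finish, I would verify the two listed properties. Noetherianness and the description of $\pi_0$ follow by comparing $R^{un}_{\widehat{\GG}}$ with the classical Lubin--Tate ring: the functor $\on{Def}_{\widehat{\GG}}$ restricted to discrete rings coincides with the classical deformation functor, so $\pi_0 R^{un}_{\widehat{\GG}} \cong W(k)[[v_1,\dots,v_{n-1}]]$ by the Lubin--Tate theorem, and Noetherianness of each $\pi_i$ is obtained by induction up the Postnikov tower using the finiteness of the tangent complex. The universal property in (2) then follows by nilpotent approximation: any complete augmented $A$ is the limit of its Postnikov tower, each stage of which is a square-zero extension, and infinitesimal cohesiveness reduces the identification of $\Map(R^{un}_{\widehat{\GG}},A)$ with $\on{Def}_{\widehat{\GG}}(A)$ to the corepresenting property against square-zero extensions, which is built into the tangent complex computation. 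The main technical obstacle is precisely the identification of the tangent complex, since one must argue that deformations of a spectral formal group are governed by the shifted tangent Lie algebra; every other step is formal once this infinitesimal computation is in hand.
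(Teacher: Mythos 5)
The paper does not actually prove this statement: it is quoted wholesale from \cite{ellipticII} (where it is the existence theorem for the spectral deformation ring), and the remark immediately following it in the paper points out that Lurie's argument is really carried out for \emph{$p$-divisible groups}, with the formal group recovered as the identity component of a connected $p$-divisible group over the perfect field $k$. So there is no in-paper proof to compare against, and your sketch has to stand on its own as a reconstruction of Lurie's argument. As a high-level outline of the formal-moduli-problem strategy (cohesiveness, tangent complex, representability, identification of $\pi_0$ with the classical Lubin--Tate ring) it is pointed in the right direction, but two of its load-bearing steps are wrong or seriously incomplete.

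First, the tangent complex identification is incorrect. If $T_{\on{Def}_{\widehat{\GG}}}$ were $\on{Lie}(\widehat{\GG})[-1]\simeq k[-1]$, the deformation ring would have trivial relative tangent space over $W(k)$, contradicting the conclusion you yourself draw later that $\pi_0 R^{un}_{\widehat{\GG}}\cong W(k)[[v_1,\dots,v_{n-1}]]$: already classically, first-order deformations of a height-$n$, one-dimensional formal group over $k$ form a $k$-vector space of dimension $n-1$ (computed via symmetric $2$-cocycles modulo coboundaries), not via the Lie algebra of $\widehat{\GG}$ itself. The correct infinitesimal computation is exactly the hard part, and Lurie obtains it by replacing the formal group with its associated $p$-divisible group, whose moduli admits a well-behaved cotangent complex; the naive moduli of formal groups over $E_\infty$-rings does not directly feed into the representability theorem in the way you describe. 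Second, the deduction of property (2) for all complete Noetherian $A$ is not a formal consequence of square-zero corepresentability plus Postnikov towers: completeness with respect to $\ker(\pi_0 A\to k)$ is an adic condition on $\pi_0$, so one must also resolve $A$ by its artinian quotients and prove that the deformation functor is \emph{integrable}, i.e.\ that corepresentability on artinian objects propagates to the limit. Integrability fails for general formal moduli problems and is a substantive theorem here, again proved by Lurie through the $p$-divisible group picture. Without these two inputs the sketch does not close.
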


\begin{rem}
We can interpret this theorem as saying that the ring $R^{\un}_{\formalgroup_0}$ corepresents the spectral formal moduli problem classifying deformations of $\formalgroup_0$. Of course, this then means that there exists a universal deformation (this is nonclassical) over $R^{\un}_{\formalgroup_0}$ which base changes to any other deformation of $\formalgroup$.
\end{rem}

\begin{rem}
This is actually proven in the setting of \emph{$p$-divisible groups} over more general algebras over $k$. However, the formal group in question is the identity component of a $p$-divisible group over $k$; moreover, any deformation of the formal group will arise as the identity component of a deformation of the corresponding $p$-divisible group (\textit{cf}. \cite[Example 3.0.5]{ellipticII}).
\end{rem}
 
Now fix an arbitrary formal group $\widehat{\GG}$ of height $n$ over a finite field $k$, and take its Cartier dual $D(\formalgroup) := \formalgroup^\vee$. From Construction~\ref{Constr:8.9}, we see that this is an affine group scheme over $\Spec(k)$. 

\begin{thm}
There exists a spectral scheme  $D(\formalgroup^{\un})$ defined over the $E_\infty$-ring $R^{\un}_{\widehat{\GG}}$, which lifts $D(\formalgroup)$, giving rise to the  following  Cartesian diagram of spectral schemes:

$$
\xymatrix{
&D\left(\formalgroup\right) \ar[d]_{\phi'} \ar[r]^{p'} &  D\left(\formalgroup^{\un}\right) \ar[d]^{\phi}\\
 & \Spec(k)   \ar[r]^{p}& \Spec\left(R^{\un}_{\formalgroup}\right)\rlap{.}
}
$$
\end{thm}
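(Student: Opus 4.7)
The plan is to produce $\spectralift$ by applying the weak Cartier duality construction of \ref{justlabelyourshit} to the universal deformation of $\formalgroup$, and then to obtain the Cartesian square from the base-change compatibility established in Proposition \ref{basechangeinspectralsetting}. The main technical ingredients are Theorem \ref{deformation} (to provide the universal deformation as a formal group over the spectral deformation ring $R^{un}_{\formalgroup}$) and Proposition \ref{basechangeinspectralsetting} (so that Cartier duality commutes with pullback along $\rho$).

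First, I would invoke Theorem \ref{deformation} to produce the universal deformation $\formalgroup^{un}$, which is by construction a formal group over $R^{un}_{\formalgroup}$ equipped with a specified isomorphism $\formalgroup^{un}\otimes_{R^{un}_{\formalgroup}} k \simeq \formalgroup$ along the structure morphism $\rho\colon R^{un}_{\formalgroup}\to k$. By the definition of a spectral formal group in the sense of \cite{ellipticII}, $\formalgroup^{un}$ corresponds to a strict abelian group object in smooth coalgebras over $R^{un}_{\formalgroup}$; concretely there is a Hopf-algebra-like object $\H^{un}\in\on{Ab}(\on{cCAlg}^{sm}_{R^{un}_{\formalgroup}})$ with $\on{coSpec}(\H^{un})\simeq\formalgroup^{un}$.

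Second, I apply Construction \ref{justlabelyourshit}: pass $\H^{un}$ through the forgetful functor $\on{Ab}(\on{cCAlg}_{R^{un}_{\formalgroup}})\to\on{CMon}(\on{cCAlg}_{R^{un}_{\formalgroup}})$, view the resulting commutative monoid as a bialgebra in $R^{un}_{\formalgroup}$-modules, and take its spectral $\Spec$. This yields an affine spectral scheme
\[
\spectralift := \Spec(\H^{un})
\]
together with the structure of a group-like commutative monoid object in $\on{sStk}_{R^{un}_{\formalgroup}}$, giving the claimed group-scheme structure. The structure map $\phi\colon\spectralift\to\Spec R^{un}_{\formalgroup}$ is the canonical one.

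Third, I verify the Cartesian square by base change. Pulling back $\spectralift$ along $p\colon\Spec k\to\Spec R^{un}_{\formalgroup}$ amounts to extending scalars of the bialgebra $\H^{un}$ along $\rho$. Because the linear duality/functor $\on{Spec}(-)$ on bialgebras commutes with extension of scalars of the underlying module, and because Cartier duality in the spectral setting is compatible with base change by Proposition \ref{basechangeinspectralsetting}, we obtain
\[
\spectralift\times_{\Spec R^{un}_{\formalgroup}}\Spec k\;\simeq\;\bigl(\formalgroup^{un}|_{k}\bigr)^{\vee}\;\simeq\;\formalgroup^{\vee}\;=\;\mathsf{Fix}_{\formalgroup},
\]
where the middle equivalence uses the defining isomorphism $\formalgroup^{un}|_{k}\simeq\formalgroup$ from Theorem \ref{deformation}. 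This produces the desired Cartesian square and functoriality follows because the universal deformation and Cartier duality are each functorial in $\formalgroup$. The step I expect to be most delicate is making sure that passing between the abelian-group-object and commutative-monoid-object incarnations of $\H^{un}$ is stable under base change in the spectral setting (where this transition loses information in general); this is handled by keeping track of the strict bialgebra data on the nose throughout, as in \ref{basechangeinspectralsetting}, rather than working only with the spectral group scheme up to equivalence.
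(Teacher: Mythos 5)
Your proposal is correct and follows essentially the same route as the paper's own proof: invoke Theorem \ref{deformation} to obtain the universal deformation over $R^{un}_{\formalgroup}$, realize it as an abelian group object $\mathcal{H}$ in smooth coalgebras, pass through the forgetful functor to grouplike commutative monoids and take $\Spec$ of the resulting bialgebra as in Construction \ref{justlabelyourshit}, and conclude the Cartesian square from the base-change compatibility of Cartier duality in Proposition \ref{basechangeinspectralsetting}. The only difference is cosmetic: you flag explicitly the delicacy of the abelian-group-object to commutative-monoid transition under base change, which the paper handles implicitly by the same appeal to Proposition \ref{basechangeinspectralsetting}.
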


\begin{proof}
By Theorem~\ref{deformation} above, given a formal group $\formalgroup$ over a perfect field, the functor associating to an augmented ring $A \to k$ the groupoid of deformations  $\on{Def}(A)$ is corepresented by the spectral (unoriented) deformation ring  $R^{\un}_{\formalgroup}$. 
Hence we obtain a map 
$$
R^{\un}_{\formalgroup} \lra k
$$
of $E_{\infty}$-algebras over $k$. Over   $\Spec(R^{\un}_{\formalgroup})$, one has the universal deformation $\formalgroup_{\un}$. This base changes along the above map to $\formalgroup$. 
By definition, this formal group is of the form $\on{coSpec}(\mathcal{H})$ for some $\mathcal{H} \in \on{Ab}(\on{cCAlg}^{\sm}_{{R^{\un}_{\formalgroup}}})$. Let 
$$
U\colon \on{Ab}\left(\on{cCAlg}^{\sm}_{{R^{\un}_{\formalgroup}}}\right) \lra \on{CMon}^{\gp}\left(\on{cCAlg}^{\sm}_{{R^{\un}_{\formalgroup}}}\right) 
$$
be the forgetful functor from abelian group objects to group-like commutative monoid objects. We recall that the symmetric monoidal structure on cocommutative coalgebras is the Cartesian one. Hence group-like commutative monoids will have the structure of $E_{\infty}$-algebras in the symmetric monoidal $\infty$-category of $R^{\un}_{\formalgroup}$-modules. In particular, we obtain a commutative and cocommutative bialgebra, so we can take $\Spec(\mathcal{H})$; this will be a group-like commutative monoid object in the category of affine spectral schemes over $\Spec(R^{\un}_{\formalgroup})$. Since Cartier duality commutes with base change (\textit{cf}.~Proposition~\ref{basechangeinspectralsetting}), we conclude that $\Spec(\mathcal{H})$ base changes to $D(\formalgroup)$ under the map $ R^{\un}_{\formalgroup}$.  
\end{proof}

\begin{rem}
One might wonder about the possibility of lifting,  to the sphere spectrum, the filtration on $D(\formalgroup)$ given by the deformation to the normal cone. As we will see in Section~\ref{sec10}, this is substantially more subtle and fails  for the case $\formalgroup= \widehat{\GG_m}$. 
\end{rem}

\begin{ex}
  As a motivating example, consider $\formalgroup= \widehat{\GG_m}$, the formal multiplicative group over $\mathbb{F}_p$. As described in \emph{loc.\ cit.},
  this formal group is Cartier dual to $\on{Fix} \subset \W_p$, the Frobenius fixed point subgroup scheme of the Witt vectors $\W_p(-)$. This lifts to $R^{\un}_{\widehat{\GG_m}}$, which in this case is none other than the $p$-complete sphere spectrum $\mathbb{S}\hat{_p}$; \textit{cf}. \cite[Corollary 3.1.19]{ellipticII}. In fact, this object lifts to the sphere itself, by the discussion in \cite[Section 1.6]{ellipticII}. Hence we obtain an abelian group object in the category $\on{cCAlg}_{\mathbb{S}\hat{_p}}$ of smooth coalgebras over the $p$-complete sphere. Taking the image of this along the forgetful functor 
$$
\on{Ab}\left(\on{cCAlg}_{\mathbb{S}\hat{_p}}\right) \lra \on{CMon}\left(\on{cCAlg}_{\mathbb{S}\hat{_p}}\right), 
$$
we obtain a group-like commutative monoid $\mathcal{H}$ in $\on{cCAlg}_{\mathbb{S}\hat{_p}}$, namely a bialgebra in $p$-complete spectra. We set $\Spec(\mathcal{H}) = D(\formalgroup^{\un})$. Then 
base changing $\mathsf{Fix}^{\Sph}$ along the map
$$
\mathbb{S}\hat{_p} \lra \tau_{\leq 0}\mathbb{S}\hat{_p} \simeq \Z_p \lra \mathbb{F}_p
$$
recovers precisely the affine group scheme $D(\formalgroup^{\un})$, by the compatibility of Cartier duality with base change. 

One may even go further and base change to the orientation classifier (this is the $E_\infty$-ring classifying \emph{oriented} deformations of the formal group, which are compatible with a complex orientation; \textit{cf}. \cite[Chapter 6, Construction 6.0.1]{ellipticII})
$$
\mathbb{S}\hat{_p} \simeq R^{\un}_{\widehat{\GG_m}} \lra  R^{\on{or}}_{\widehat{\GG_m}} \simeq E_1 
$$
and recover height~$1$ Morava $E$-theory, a complex orientable spectrum. Moreover, in height~$1$, Morava $E$-theory is the $p$-complete complex $K$-theory spectrum $\KU\hat{_p}$. Applying the above procedure, one obtains the Hopf algebra corresponding to 
$$
C_{*}\left(\C P^{\infty}, \KU\hat{_p}\right), 
$$
whose algebra structure is induced by the abelian group structure on $\C P^{\infty}$. We now take the spectrum of this bialgebra; note that this is to be done in the  nonconnective sense (see \cite{lurie2016spectral}) as $\KU\hat{_p}$ is nonconnective. In any case, one obtains an affine nonconnective spectral group scheme 
$$
\Spec\left(C_{*}\left(\C P^{\infty}, \KU\hat{_p}\right)\right)
$$
which arises via the base change $\Spec(\KU_{\hat{p}}) \to \Spec(R^{\un}_{\formalgroup_m})$. 
We summarize this discussion with the following diagram of pullback squares in the $\infty$-category of nonconnective spectral schemes:
$$
\xymatrix{
&\on{Fix} \ar[d]_{\phi'} \ar[r]^{p'} &  \Spec(\mathcal{H}) \ar[d]^{\phi}  & \Spec\left( C_{*}\left(\C P^{\infty}, \KU\hat{_p}\right)\right) \ar[d] \ar[l] \\
 & Spec\left(\mathbb{F}_p\right)   \ar[r]^{p}& \Spec(R^{\un}_{\formalgroup})  & \Spec\left(\KU_{\hat{p}}\right)\rlap{.} \ar[l]
}
$$
Note that  we have the  factorization  
$$
\Sph\hat{_p} \lra ku\hat{_p} \lra \KU\hat{_p}
$$
through $p$-complete connective complex $K$-theory, so these lifts exists there as well. 
\end{ex}

\section{Lifts of \texorpdfstring{$\boldsymbol{\formalgroup}$}{Ghat}-Hochschild homology to the sphere} \label{sec9}
Let $ \formalgroup$ be a height $n$ formal group over a perfect field $k$. We study a variant of $\formalgroup$-Hochschild homology which is more adapted to the tools of spectral algebraic geometry. Roughly speaking, we take mapping stacks in the setting of spectral algebraic geometry over $k$, instead of derived algebraic geometry.

\begin{defn} \label{E_inftyvariant}
Let $\formalgroup$ be a formal group over $k$. We define the \emph{$E_{\infty}$-$\formalgroup$ Hochschild homology} to be the functor defined by 
$$
\on{HH}^{\formalgroup}_{E_\infty}(A)\colon \on{CAlg}_k^{\cn} \lra \on{CAlg}_k^{\cn}, \quad \on{HH}^{\formalgroup}_{E_\infty}(A)= R \Gamma\left(\Map_{\sStk_k}\left(B \formalgroup^{\vee}, \Spec (A)\right),\OO\right), 
$$
where $\Map_{\sStk_k}(-,-)$ denotes the internal mapping object of the $\infty$-topos $\sStk_k$. 
\end{defn}

It is not clear how the two notions of $\formalgroup$-Hochschild homology compare. 

\begin{conj}
Let $\formalgroup$ be a formal group and $A$ a simplicial commutative $k$-algebra. Then there exists a natural equivalence
$$
\theta(\on{HH}^{\formalgroup}(A)) \lra \on{HH}_{E_\infty}^{\formalgroup}(\theta(A))
$$
In other words, the underlying $E_\infty$-algebra of the $\GG$-Hochschild homology coincides with the $E_\infty$ $\formalgroup$-Hochschild homology of $A$, viewed as an $E_\infty$-algebra. 
\end{conj}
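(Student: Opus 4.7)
The plan is to compute both sides via the bar resolution of $B\formalgroup^\vee$ and then reduce the desired equivalence to a symmetric-monoidality assertion for $\theta$ on discrete flat algebras. Since $\formalgroup^\vee$ is an affine group object in both $\on{dStk}_k$ and $\on{sStk}_k$, I would first identify
$$
B\formalgroup^\vee \simeq |(\formalgroup^\vee)^\bullet|
$$
as the geometric realization of the simplicial bar construction in each category. Writing $H := \mathcal{O}(\formalgroup^\vee)$ for its (discrete, $k$-flat) Hopf algebra of functions, the \v{C}ech nerve dualizes to a cosimplicial algebra whose totalization computes the global sections of the mapping stack. Working in $\on{sCAlg}_k$, this yields
$$
\on{HH}^{\formalgroup}(A) \simeq \on{Tot}_{\Delta}\bigl(A \otimes_k H^{\otimes_k \bullet}\bigr),
$$
with all coproducts formed in $\on{sCAlg}_k$; a formally identical description of $\on{HH}_{E_\infty}^{\formalgroup}(\theta(A))$ holds with all coproducts formed in $\on{CAlg}_k^{cn}$.

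Because $\theta$ is monadic and comonadic, it preserves totalizations, so producing the desired equivalence reduces to a natural cosimplicial equivalence between the two cosimplicial algebras. By iterated associativity of the tensor product this further reduces to the single-factor assertion
$$
\theta(A \otimes_k^{\on{sCAlg}} H) \simeq \theta(A) \otimes_k^{E_\infty} \theta(H),
$$
together with compatibility along the coface and codegeneracy maps coming from the strict Hopf structure on $H$.

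The main obstacle is this single-factor symmetric-monoidality. In general $\theta$ is \emph{not} symmetric monoidal, since the free sCR and the free $E_\infty$-$k$-algebra on a $k$-module disagree (strict versus derived symmetric powers). My approach is to exploit that $H$ is \emph{discrete and flat} over $k$: at the level of underlying $k$-module spectra both sides compute the ordinary tensor product $A \otimes_k H$ (no further derivation is needed, as $H$ is flat), so the content reduces to matching the two $E_\infty$-algebra structures on this common complex. A discrete commutative $k$-algebra carries a canonical strict structure as both an sCR and an $E_\infty$-algebra, and the multiplication on $A \otimes_k H$ is pinned down uniquely by the universal property of the coproduct with such a strict algebra; compatibility of these two universal properties under $\theta$ should yield the equivalence. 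The verified case $\formalgroup = \widehat{\GG_m}$, where the bar construction recovers classical Hochschild homology in both settings and the divided-power Hopf structure on $\mathcal{O}(\mathsf{Fix})$ controls the comparison, serves as the template; totalizing and assembling the cosimplicial data then yields the conjecture.
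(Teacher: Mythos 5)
First, be aware that the statement you are addressing is stated in the paper only as a \emph{conjecture}: the author gives no proof, and verifies only the case $\formalgroup=\widehat{\GG_m}$ (Proposition \ref{hochschildhomologygm}), by a quite different route (reduction to the free $E_\infty$-algebra on one generator, the affinization equivalence $C^*(B\mathsf{Fix},\OO)\simeq k^{S^1}$, and finite cohomological dimension of $B\mathsf{Fix}$). So your argument has to stand entirely on its own, and as written it does not.

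The central formula of your proposal, $\on{HH}^{\formalgroup}(A)\simeq \on{Tot}_{\Delta}\bigl(A\otimes_k H^{\otimes_k\bullet}\bigr)$, is incorrect. Writing $G=\formalgroup^\vee=\Spec H$, one has $BG=\colim_{\Delta^{op}}G^{\times\bullet}$ and hence $\Map(BG,\Spec A)\simeq\lim_{\Delta}\Map(G^{\times n},\Spec A)$; but $\Map(G^{\times n},\Spec A)$ is a Weil restriction whose ring of functions is not $A\otimes_k H^{\otimes n}$. Already for the finite discrete group scheme $G=\Spec(k\times k)$ one has $\Map(G,\Spec A)=\Spec(A\otimes_k A)$, whereas $A\otimes_k H=A\times A$; for the Hopf algebras relevant here ($H$ free of countable rank over $k$) the discrepancy is worse. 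Moreover $R\Gamma(-,\OO)$ does not commute with the totalization $\lim_\Delta$ without the representability and finite-cohomological-dimension input of Proposition \ref{morebasechangeshiiiii} and Theorem \ref{representability}, which you never invoke. Once the cosimplicial formula is gone, the rest of the reduction collapses. Incidentally, the single-factor step you single out as the ``main obstacle'' is actually the easy part: $\theta$ preserves colimits, hence coproducts, and the coproduct in both $\on{sCAlg}_k$ and $\on{CAlg}^{cn}_k$ is the derived tensor product, so $\theta(A\otimes_k H)\simeq\theta(A)\otimes_k\theta(H)$ is automatic and has nothing to do with the mismatch of free objects (which concerns preservation of $\on{Sym}$, not of coproducts). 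The genuine difficulty of the conjecture, which your proposal does not engage with, is that the two mapping stacks are internal homs in two \emph{different} $\infty$-topoi --- presheaves on $\on{sCAlg}_k^{op}$ versus on $(\on{CAlg}^{cn}_k)^{op}$ --- so one must first construct a comparison map between $\theta$ of the one and the other, and then prove it is an equivalence; in the one known case the nontrivial computational input is $C^*(B\mathsf{Fix},\OO)\simeq k^{S^1}$ from \cite{moulinos2019universal}, for which no analogue is currently available for a general $\formalgroup$.
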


At least when $\formalgroup= \widehat{\GG_m}$, we know that this is true. In fact, this also recovers Hochschild homology (relative to the base ring $k$).

\begin{prop} \label{hochschildhomologygm}
Let $A$ be a simplicial commutative algebra over $k$. There is a natural equivalence  
$$
\theta(\on{HH}(A/k)) \simeq  \on{HH}^{\widehat{\GG_m}}_{E_\infty}(\theta(A))
$$
of\, $E_\infty$-algebra spectra over $k$.
\end{prop}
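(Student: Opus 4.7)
The strategy is twofold: first identify the mapping stack $\Map_{\on{sStk}_k}(S^1, \Spec A)$ with $\Spec \on{HH}(A/k)$, and second exhibit a canonical comparison map $S^1 \to B\widehat{\GG_m}^\vee$ in $\on{sStk}_k$ that induces an equivalence after applying $\Map(-, \Spec A)$.

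The first identification is standard: since $S^1$ is a constant stack, the cotensor--tensor adjunction for $\Spec : \on{CAlg}_k \to \on{sStk}_k$ yields
$$
\Map_{\on{sStk}_k}(S^1, \Spec A) \simeq \Spec(A \otimes_k S^1),
$$
and the usual formula $A \otimes_k S^1 \simeq A \otimes_{A \otimes_k A} A$ identifies this tensor product with $\on{HH}(A/k)$ for any $E_\infty$-algebra $A$. For the comparison map, I would use $S^1 \simeq B\Z$ and deloop the canonical inclusion of $\Z$ into the grouplike commutative monoid $\widehat{\GG_m}^\vee$, which exists since $\Z$ sits inside the grouplike elements of the Hopf algebra $\mathcal{H} := \OO(\widehat{\GG_m}^\vee)$.

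To show this comparison map induces an equivalence on $\Map(-, \Spec A)$, I would compute both sides as totalizations of cosimplicial $E_\infty$-algebras coming from the bar resolutions of $B\widehat{\GG_m}^\vee$ and $B\Z$. The $S^1$-side yields $C^*(S^1; A) \simeq \Tot(A \otimes_k k^{\otimes \bullet})$, while the $B\widehat{\GG_m}^\vee$-side yields $\Tot(A \otimes_k \mathcal{H}^{\otimes \bullet})$, the $A$-linear cobar of $\mathcal{H}$. The proposition then reduces to an Eilenberg--Moore type identification
$$
\Tot\bigl(\mathcal{H}^{\otimes \bullet}\bigr) \simeq C^*(S^1; k)
$$
as $E_\infty$ $k$-algebras, with the $A$-linear version following by base change.

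The main obstacle is establishing this cobar identification in the spectral setting. A promising route is to leverage the analogous affinization result of \cite{moulinos2019universal}, proved in derived algebraic geometry over simplicial commutative rings, and transport it via the comonadic forgetful functor $\theta: \on{sCAlg}_k \to \on{CAlg}_k^{\on{cn}}$. The delicate point is that $\theta$ must be verified to commute with the relevant mapping-stack and bar constructions; since both the target $\Spec A$ and the Hopf algebra $\mathcal{H}$ carry compatible data in the simplicial commutative and $E_\infty$ contexts, such a comparison should go through, though it must be carefully checked.
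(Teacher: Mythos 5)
Your first paragraph is correct (and is implicitly the paper's starting point: $\Map_{\on{sStk}_k}(S^1,\Spec A)\simeq \Spec(S^1\otimes_k A)=\Spec \on{HH}(A/k)$), and your key input --- the affinization equivalence $\Tot(\mathcal{H}^{\otimes\bullet})\simeq C^*(S^1;k)$ from \cite{moulinos2019universal} plus a base-change step --- is also the paper's. But the central step of your third paragraph misidentifies both sides, and this is a genuine gap. The global sections of $\Map_{\on{sStk}_k}(S^1,\Spec A)$ are $\on{HH}(A/k)=A\otimes_k S^1$, a colimit, not the cochain algebra $C^*(S^1;A)=A^{S^1}$, a limit: for $A=k[x]$ the former is $k[x]\oplus \Omega^1_{k[x]}[1]$ while the latter is $k[x]\oplus k[x][-1]$, so the two cannot be interchanged. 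Likewise $R\Gamma(\Map(B\widehat{\GG_m}^\vee,\Spec A),\OO)$ is not the cobar $\Tot(A\otimes_k\mathcal{H}^{\otimes\bullet})$: writing $B\widehat{\GG_m}^\vee=\colim_{\Delta^{op}}(\widehat{\GG_m}^\vee)^{\times\bullet}$ gives $\Map(B\widehat{\GG_m}^\vee,\Spec A)\simeq\lim_{\Delta}\Map((\widehat{\GG_m}^\vee)^{\times n},\Spec A)$, and the terms $\Map(\Spec \mathcal{H}^{\otimes n},\Spec A)$ are large non-affine mapping stacks, not $\Spec(A\otimes_k\mathcal{H}^{\otimes n})$. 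The cobar enters correctly only with coefficients in the \emph{test} algebra: by the $(\Spec\dashv R\Gamma)$ adjunction, $\Map(B\widehat{\GG_m}^\vee,\Spec A)(B)\simeq\Map_{\on{CAlg}_k}(A,\Tot(\mathcal{H}^{\otimes\bullet}\otimes_k B))$. Taken literally, your argument proves $\Tot(A\otimes_k\mathcal{H}^{\otimes\bullet})\simeq A^{S^1}$, which is true but is not the proposition.

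The repair is to feed the affinization equivalence through the functor of points, which is what the paper does: since $k\{t\}=\on{Free}(k)$ is a compact projective generator of $\on{CAlg}_k^{\on{cn}}$ and $\Map(Y,-)$ preserves the limits of affines corresponding to colimits of algebras, one reduces to $X=\Spec k\{t\}$, where the $B$-points of the two mapping stacks are literally $\Omega^{\infty}$ of $C^*(B\widehat{\GG_m}^\vee;B)$ and of $C^*(S^1;B)$; these agree by combining $C^*(B\mathsf{Fix},\OO)\simeq k^{S^1}$ with the finite cohomological dimension of $B\mathsf{Fix}$, which is what lets one pull $-\otimes_k B$ out of the totalization and is also the missing justification for your ``follows by base change.'' Your closing concern about transporting the affinization result along $\theta:\on{sCAlg}_k\to\on{CAlg}_k^{\on{cn}}$ is reasonable but avoidable: the cited statement is an equivalence of $E_\infty$ cochain algebras, and the flat classical group scheme $\mathsf{Fix}$ Kan-extends to a spectral stack with the same cochains, so no comparison of mapping stacks across the two settings is needed.
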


\begin{proof}
This is a modification of the argument of \cite{moulinos2019universal}. We have the (underived) stack $\mathsf{Fix} \simeq \widehat{\GG_m}^\vee$  and in particular a map 
$$
S^1 \lra B \mathsf{Fix} \simeq B\widehat{\GG_m}^\vee.
$$
This can also be interpreted, by Kan extension, as a map of spectral stacks. This further induces  a map between the mapping stacks
$$
\Map_{\sStk_k}(S^1, X) \lra \Map_{\sStk_k}\left(B\widehat{\GG_m}^\vee, X\right).
$$
We would like to show that this is an equivalence. In order to do this, we reduce to the case where $X = \AAA_{\sm}^1$, the (smooth) affine line.  
Recall that all connective $E_\infty$ $k$-algebras may be expressed as  colimits of free algebras, and all colimits of free algebras may be expressed as colimits of the free algebra on one generator $k\{t\}$. This follows from \cite[Corollary 7.1.4.17]{luriehigher}, where it is shown that $\on{Free}(k)$ is a compact projective generator for $\on{CAlg}_k$. These colimits become limits in the opposite category of derived affine schemes. As taking mapping stacks commutes with taking limits, we conclude that it is enough to test the above equivalence in the case where $X= \AAA_{\sm}^1$; this is the ``smooth'' affine line, \textit{i.e.}, $\AAA_{\sm}^1 = \Spec(k\{t\})$, the spectrum of the free $E_\infty$-$k$-algebra on one generator. For this we check that there is an equivalence on functors of points
$$
B \longmapsto \Map\left(B \widehat{\GG_m}^\vee \times B, \AAA^1\right)  \simeq  \Map\left(S^1 \times B, \AAA^1\right) 
$$
for each $B \in \on{CAlg}^{\cn_k}$. Each side may be computed as $\Omega^{\infty}( \pi_{*}\OO)$, where $\pi\colon B G \times B \to \Spec(k)$ denotes the structural morphism (where $G \in \{ \Z, \widehat{\GG_m}^\vee\}$).  The result now follows from the following two facts: 
\begin{itemize}
\item There is an equivalence of global sections $C^*(B \mathsf{Fix}, \OO) \simeq k^{S^1}$; \textit{cf}. \cite[Proposition 3.3.2]{moulinos2019universal}. 
\item  $B \mathsf{Fix}$ is of finite cohomological dimension; \textit{cf}. \cite[Proposition 3.3.7]{moulinos2019universal}.
\end{itemize}
We now obtain an equivalence on $B$-points
$$
\Omega^\infty\left(\pi_* \left(B\widehat{\GG_m}^\vee \times B\right)\right) \simeq   \Omega^\infty\left(\pi_*\left(B\widehat{\GG_m}^\vee\right) \otimes_k B \right)\simeq   \Omega^\infty\left(\pi_*\left(S^1\right) \otimes_k B \right)\simeq \Omega^\infty\left(\pi_* \left(S^1 \times B\right)\right).
$$
Note that the second equivalence follows from the finite cohomological dimension of $B\widehat{\GG_m}^\vee$. Applying global sections $R\Gamma(-, \OO)$ to this equivalence gives the desired equivalence of $E_\infty$-algebra spectra.  
\end{proof}

We now show that $\formalgroup$-Hochschild homology possesses additional structure which is already seen at the level of ordinary Hochshchild homology. Recall that for an $E_{\infty}$-ring $R$, its topological Hochschild homology may be expressed as the tensor with the circle: 
$$
\on{THH}(R) \simeq S^1 \otimes_{\Sph} R.
$$
Thus, when applying the $\Spec(-)$ functor  to the $\infty$-category of spectral schemes, this becomes a cotensor over~$S^1$. In fact, this coincides with the internal mapping object $\Map(S^1, X)$, where $X= \Spec(R)$. Furthermore, one has  the following base-change property of topological Hochshild homology: for a map $R \to S$ of $E_\infty$-rings, there is a natural equivalence
$$
\on{THH}(A/ R) \otimes_{R}S \simeq \on{THH}(A \otimes_R S/ S). 
$$
In particular, if $R$ is a commutative ring over $\mathbb{F}_p$ which admits a lift $\widetilde{R}$ over the sphere spectrum, then one has an equivalence 
$$
\on{THH}\left(\tilde{R}\right) \otimes_{\Sph} \mathbb{F}_p \simeq \on{HH}\left(R/ \mathbb{F}_p\right). 
$$
This can be interpreted geometrically as an equivalence of spectral schemes
$$
\Map\left(S^1, \Spec\left(\tilde{R}\right)\right) \times \Spec \left(\mathbb{F}_p\right) \simeq \Map\left(S^1, \Spec(R)\right)  
$$
over $\Spec (\mathbb{F}_p)$.
Let us show that such a geometric lifting occurs in many instances in the setting of $\formalgroup$-Hochschild homology.   

\begin{const}
Let $\formalgroup$ be a height $n$ formal group over $k$, and let $R$ be an commutative $k$-algebra. Let $\formalgroup_{\un}$ denote the universal deformation of $\formalgroup$, which is a formal group over   $R_{\formalgroup}^{\un}$. As in Section~\ref{Cartierdualoversphere}, we let $D(\formalgroup^{\un})$ denote its Cartier dual over this $E_\infty$-ring. 
\end{const}

\begin{thm}
Let $\formalgroup$ be a height $n$ formal group over $\mathbb{F}_p$, and let $X$ be an $\mathbb{F}_p$-scheme. Suppose there exists a lift $\tilde{X}$ over the spectral deformation ring $R^{\un}_{\formalgroup}$. Then there exists a homotopy pullback square of spectral algebraic stacks 
$$
\xymatrix{
&  \Map\left(B D\left(\formalgroup\right), X\right)    \ar[d]_{\phi'} \ar[r]^{p'} &  \Map\left(B D\left(\formalgroup^{\un}\right), \tilde{X}\right) \ar[d]^{\phi}\\
 & \Spec\left(\mathbb{F}_p\right)   \ar[r]^{p}& \Spec\left(R^{\un}_{\formalgroup}\right)
}
$$
displaying $\Map(B D(\formalgroup^{\un}), \tilde{X})$ as a lift of\, $\Map(B D(\formalgroup), X)$. 
\end{thm}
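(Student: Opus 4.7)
The plan is to deduce this from the general fact that internal mapping objects in an $\infty$-topos are preserved under pullback along morphisms of the base, together with the base-change statements at our disposal.  First, I would collect the two pullback identifications that we have in hand.  By assumption, $p^{*} \tilde X := \tilde X \times_{\Spec R^{un}_{\formalgroup}} \Spec \mathbb{F}_p \simeq X$.  By the theorem just proved in Section \ref{spectralll} (whose proof invokes Proposition \ref{basechangeinspectralsetting} on the base-change compatibility of Cartier duality), there is a canonical equivalence $p^{*} \spectralift \simeq \Fix_{\formalgroup}$ in the $\infty$-category of group-like commutative monoids in $\on{sStk}_{\mathbb{F}_p}$.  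Since the pullback functor $p^{*}$ preserves finite products and all colimits, it commutes with the formation of classifying stacks $B(-)$, so we also obtain $p^{*} (B \spectralift) \simeq B \Fix_{\formalgroup}$.

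Next I would invoke the following general fact.  In an $\infty$-topos $\mathcal{X}$, for any morphism $f \colon S' \to S$, the pullback functor $f^{*} \colon \mathcal{X}_{/S} \to \mathcal{X}_{/S'}$ fits into an adjoint triple $f_! \dashv f^{*} \dashv f_{*}$, with $f^{*}$ being symmetric monoidal for the cartesian product and the projection formula $f_!(Y \times_{S'} f^{*} Z) \simeq f_! Y \times_S Z$ holding.  A formal consequence is that $f^{*}$ preserves internal mapping objects: for every pair of objects $Z, W \in \mathcal{X}_{/S}$ one has
$$
f^{*} \Map_{\mathcal{X}_{/S}}(Z, W) \simeq \Map_{\mathcal{X}_{/S'}}(f^{*} Z, f^{*} W).
$$
I would apply this to the $\infty$-topos of spectral stacks, the morphism $p \colon \Spec \mathbb{F}_p \to \Spec R^{un}_{\formalgroup}$, and the objects $Z = B \spectralift$ and $W = \tilde X$.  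Combining with the base-change identifications of the previous paragraph, this yields
$$
p^{*} \Map_{\on{sStk}_{R^{un}_{\formalgroup}}}(B \spectralift, \tilde X) \simeq \Map_{\on{sStk}_{\mathbb{F}_p}}(B \Fix_{\formalgroup}, X),
$$
which is exactly the statement that the square in the theorem is Cartesian.

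The main subtlety is verifying the preservation of internal mapping objects by the relevant pullback functor; this is formal in the setting of an $\infty$-topos but requires that $\on{sStk}$ be handled as such a topos rather than merely as the opposite of $E_{\infty}$-algebras.  Once the mapping object on the right-hand side is known to be representable by a spectral algebraic stack (by nilcompleteness, infinitesimal cohesion, and the existence of an obstruction theory, as in Theorem \ref{filtrationonourguy}), one can upgrade the displayed equivalence from an equivalence of functors of points to an equivalence of spectral stacks, completing the proof.  No further hypotheses on $\tilde X$ are needed, since the argument takes place entirely at the level of internal mapping objects in spectral stacks.
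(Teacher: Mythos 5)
Your proposal is correct and follows essentially the same route as the paper: identify $p^{*}\tilde X \simeq X$ and $p^{*}B\spectralift \simeq B\Fix_{\formalgroup}$, then use that the pullback functor of the induced geometric morphism preserves internal mapping objects to conclude the square is Cartesian. Your version actually spells out the justification (the adjoint triple and projection formula, and the compatibility of $p^{*}$ with $B(-)$ via Cartier duality base change) more explicitly than the paper does.
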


\begin{proof}
Given a map $p\colon X \to Y$ of spectral schemes, there is an induced morphism of $\infty$-topoi
$$
p^*\colon \Shv^{\et}_Y \lra \Shv^{\et}_X 
$$
Here the notation $ \Shv^{\et}_X$ denotes the big \'{e}tale site. This pullback functor is symmetric monoidal and moreover behaves well with respect to the internal mapping objects. Now let $X= \Spec (\mathbb{F}_p)$ and $Y= \Spec (R_{\formalgroup}^{\un})$, and let $p$ be the map induced by the universal property of the spectral deformation ring $R$.  In this particular case, this means  there will be an equivalence 
$$
p^* \Map\left(BD\left(\formalgroup^{\un}\right), \tilde{X}\right) \simeq  \Map\left(p^* B D\left(\formalgroup^{\un}\right), p^* \tilde{X}\right) \simeq \Map\left(B D\left(\formalgroup\right), X\right)
$$
since $\tilde{X} \times \Spec (\mathbb{F}_p) \simeq\ X $ and $p^* B {D(\formalgroup^{\un}) \simeq B D(\formalgroup)}$. 
\end{proof}

From this we conclude that the $\formalgroup$-Hochschild homology has a lift in the geometric sense, in that there is a spectral mapping stack over $\Spec (R^{\un}_{\formalgroup})$ which base changes to $\Map(B \formalgroup^{\vee}, X)$. We would like to conclude this at the level of global section $E_\infty$-algebras. This is not formal unless we have a more precise understanding of the regularity properties of $\Map(B D(\formalgroup^{\un}), X)$ for an affine spectral scheme $X= \Spec(A)$. 

Indeed, there is a map
\begin{equation} \label{eq:9.1}
 R\Gamma\left(\Map\left( B D\left(\formalgroup^{\un}\right),  \tilde{X}\right), \OO\right) \otimes \mathbb{F}_p \to
R\Gamma\left(\Map\left(p^* B D\left(\formalgroup^{\un}\right), p^* \tilde{X}\right), \OO\right), 
\end{equation}
but it is not \textit{a priori} clear that this is an equivalence. In particular, we have the  diagram of stable $\infty$-categories 
$$
\xymatrix{
& \Mod_{R^{\un}_{\formalgroup}} \ar[d]^{p^*} \ar[r]^-{\phi^*} &  \QCoh\left(\Map\left( B D\left(\formalgroup^{\un}\right),  \tilde{X}\right)\right) \ar[d]^{p'^*}\\
 & \Mod_{\mathbb{F}_p} \ar[r]^-{\phi'^*} & \QCoh\left(\Map\left( B D\left(\formalgroup^{\un}\right),  \tilde{X}\right)\right)
}
$$
for which we would like to verify that the Beck--Chevalley condition holds, \textit{i.e.}, that the canonically defined map
$$
\rho\colon p^* \circ  \phi_* \lra  \phi'_* \circ  p'^*
$$
is an equivalence. Here $\phi_*$ and  $\phi'_*$ are the right adjoints and may be thought of as global section functors. This construction applied to the structure sheaf $\OO$ recovers the map (\ref{eq:9.1}). 

This would follow from Proposition~\ref{Prop:2.1}   upon knowing either that the spectral stack $\Map(B D(\formalgroup^{\un}), \tilde{X})$ is representable by a derived scheme or, more generally, that it is of finite cohomological dimension. In fact,  it is the former. 

\begin{thm} \label{representability}
Let $\formalgroup$ be as above, and let $X$ denote an spectral scheme. Then the mapping stack
$\Map(B D(\formalgroup^{\un}), X)$
is representable by a spectral scheme. 
\end{thm}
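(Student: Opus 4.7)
The plan is to apply a spectral analogue of the Artin--Lurie representability theorem, of the kind used in the proof of Theorem \ref{filtrationonourguy}, to the functor $\Map(B \spectralift, X)$ viewed over $\Spec R^{un}_{\formalgroup}$. Concretely, I will verify that this functor is nilcomplete, infinitesimally cohesive, admits a cotangent complex, and that its underlying classical stack is representable by an affine scheme over $\pi_0 R^{un}_{\formalgroup}$; the criterion will then upgrade it to a spectral scheme.

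The formal deformation-theoretic axioms reduce to the observation that $\spectralift = \Spec \mathcal{H}$ is affine, so that $B \spectralift$ is a geometric Artin spectral stack over $\Spec R^{un}_{\formalgroup}$. Standard properties of mapping stacks out of a geometric source into an affine target, as developed in \cite[Chapter 19]{lurie2016spectral}, then supply nilcompleteness, infinitesimal cohesiveness, and the existence of a cotangent complex for $\Map(B \spectralift, X)$, in direct parallel with the derived argument used in Theorem \ref{filtrationonourguy}.

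For the truncation, I would use the identification
$$
t_0 \Map(B \spectralift, X) \simeq t_0 \Map(B\, t_0 \spectralift,\, t_0 X).
$$
Here $t_0 \spectralift \simeq \Spec \pi_0 \mathcal{H}$ is an ordinary affine commutative group scheme over $\pi_0 R^{un}_{\formalgroup}$, and $t_0 X$ is a classical affine scheme. Any map from the classifying stack of an affine commutative group scheme to a classical scheme is constant---this is the observation invoked in the proof of Theorem \ref{filtrationonourguy}---so the right hand side collapses to $t_0 X$ itself, which is representable by a classical affine scheme over $\pi_0 R^{un}_{\formalgroup}$.

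The hard part will be keeping the deformation-theoretic axioms honest in the spectral rather than purely derived setting: in particular, verifying the existence of a (global, not merely ind-) cotangent complex given that $B \spectralift$ is geometric but not affine, and checking that the spectral deformation ring $R^{un}_{\formalgroup}$ is well-behaved enough that the representability theorem applies as stated. Once these checks are in hand, the spectral representability criterion identifies $\Map(B \spectralift, X)$ with a spectral scheme over $R^{un}_{\formalgroup}$, as desired, and one may then invoke Proposition \ref{morebasechangeshiiiii} to conclude the global-sections base-change statement needed in the preceding discussion.
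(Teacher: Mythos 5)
Your overall strategy coincides with the paper's: both apply the Artin--Lurie representability criterion of \cite[Theorem 18.1.0.1]{lurie2016spectral}, and your computation of the truncation (reduce to $t_0\Map(B\,t_0\spectralift, t_0X)$, note that $t_0\spectralift$ is a classical affine group scheme, and use that maps $BG \to t_0X$ into a scheme are constant) is exactly the argument the paper gives.

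The one place where your plan stops short is the step you yourself flag as hard, and it is worth naming the concrete input that closes it: the smoothness of the coalgebra $\mathcal{H}$ forces $\spectralift=\Spec\mathcal{H}$ to be \emph{flat} over $R^{un}_{\formalgroup}$, and this flatness is what drives both of the remaining axioms. For the cotangent complex, the paper does not invoke general properties of geometric sources; it uses \cite[Proposition 5.10]{halpern2014mapping}, which requires the source $B\spectralift$ to be of finite tor-amplitude over the base --- a condition that is flat-local on the source by \cite[Proposition 6.1.2.1]{lurie2016spectral} and hence can be checked on the flat affine atlas of $B\spectralift$. For nilcompleteness, the paper proves a dedicated lemma: if $Y\simeq\colim\Spec A_i$ with each $A_i$ flat over $R$ (as holds for the \v{C}ech nerve of $\Spec R^{un}_{\formalgroup}\to B\spectralift$, whose terms are tensor powers of the flat module $\mathcal{H}$), then $\Map(Y,X)$ is nilcomplete for $X$ nilcomplete; the flatness is used to commute $\tau_{\leq n}$ past $A\otimes_R(-)$ via $\pi_n(A\otimes B)\cong\on{Tor}^0_{\pi_0 R}(\pi_0A,\pi_nB)$. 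So the ``well-behavedness'' you defer is not about $R^{un}_{\formalgroup}$ at all, but about the source stack, and it is supplied precisely by the flatness built into the definition of a smooth coalgebra. With that input made explicit, your plan matches the paper's proof.
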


\begin{proof}
This will be an application of the Artin--Lurie representability theorem; \textit{cf}. \cite[Theorem 18.1.0.1]{lurie2016spectral}. Given spectral stacks $X$ and $Y$, the derived spectral mapping stack
$
\Map(Y, X) 
$
is representable by a spectral scheme if and only if it is nilcomplete, infinitesimally cohesive and admits a cotangent complex and if the truncation $t_0(\Map(Y, X)  ) $ is representable by a classical scheme. By \cite[Proposition 5.10]{halpern2014mapping},  if $Y$ is of finite Tor-amplitude and  $X$ admits a cotangent complex, then so does the mapping stack $\Map(Y, X)$. In our case, $X$ is an honest spectral scheme which has a cotangent complex. Note that the condition of being of finite Tor-amplitude is local on the source with respect to  the flat topology (\textit{cf}. \cite[Proposition 6.1.2.1]{lurie2016spectral}. Thus if there exists a flat cover $U \to Y$ such that the composition $U \to Y \to \Spec(R)$ is of finite Tor-amplitude, then $Y \to \Spec(R)$ itself has this property. 
Infinitesimal cohesion follows from \cite[Lemma 2.2.6.13]{toen2008homotopical}. The following lemma takes care of the nilcompleteness. 

\begin{lem}
Let $Y$ be a spectral stack over $\Spec(R)$ which may be written as a colimit of affine spectral schemes  
$$
Y \simeq \colim \Spec(A_i), 
$$
where each $A_i$ is flat over $R$, and let $X$ be a nilcomplete spectral stack. Then $\Map_{\Stk_R}(Y,X)$ is nilcomplete. 
\end{lem}

\begin{proof}
The argument is similar to that of an analogous claim appearing in the proof of \cite[Theorem~2.2.6.11]{toen2008homotopical}. Let $Y$ be as above. Then
$$
\Map(Y, X) \simeq \lim_i \Map(\Spec(A_i), X), 
$$
and so it amounts to verify this  when $Y = \Spec(A_i) $ for $A_i$ flat. In this case, we see that for $B \in \on{CAlg}^{\cn}$,
$$
\Map(\Spec(A), X)(B) \simeq  X( A \otimes_{R} B).
$$
The map 
$$
\Map(\Spec(A), X)(B) \lra \lim \Map(\Spec(A), X)(\tau_{\leq n} B_n), 
$$
which we need to check is an equivalence, now translates to a map 
\begin{equation}\label{benchen}
X(A \otimes_R B) \lra X(\tau_{\leq n}B \otimes_R A).  
\end{equation}
We now use the flatness assumption on $A$. Using the general formula (\textit{cf}. \cite[Proposition 7.2.2.13]{luriehigher}) in this case
$$
\pi_n(A \otimes B) \simeq \on{Tor}^0_{\pi_0(R)}(\pi_0 A, \pi_n B), 
$$
we conclude that $\tau_{\leq n }(A \otimes B) \simeq A \otimes \tau_{\leq n} B$. Thus,~\eqref{benchen} above becomes a map
$$
X(A \otimes_R B) \lra X(\tau_{\leq n}(B \otimes_R A)),  
$$
which is an equivalence because $X$ was itself assumed to be nilcomplete. 
\end{proof}

Finally, we show that the truncation is an ordinary scheme. First of all, note that the truncation functor 
$$
t_0\colon \SStk \lra \Stk
$$
preserves limits and colimits. It is induced from the Eilenberg--Maclane functor 
$$
H\colon \on{CAlg}^0 \lra \on{CAlg}, \quad A \longmapsto HA, 
$$
which is itself adjoint to the truncation functor on $E_\infty$-rings. One sees that the truncation functor $t_0 = H^*\colon \SStk \to \Stk$ will have as a right adjoint the functor 
$$
\pi_0^*\colon \Stk \lra \SStk
$$
induced by the $\pi_0$ functor
$$
R \longmapsto \pi_0 R. 
$$
Thus it is right exact and preserves colimits. Hence if $Y = B G$ for some flat spectral group scheme $G$, then $t_0 BG \simeq B t_0 G$. Now, one has the identification  
$$
t_0 \Map(Y, X) \simeq \Map\left(t_0 Y, t_0 X\right) 
$$
in this particular case because $Y \simeq \colim \Spec(A_i)$ is a colimit of flat affine schemes, so this identification may be checked by looking at each component $\Map(\Spec(A_i), X)$ of the resulting limit. In this case, we can test this by hand or refer to \cite[Remark 5.1.3]{halpern2014mapping}. 

Thus we have the identification
$$
t_0  \Map\left(B D\left(\formalgroup^{\un}\right), X\right) \simeq  \Map(B G, t_0 X)
$$
for some (classical) affine  group scheme $G$. Recall that the only classical maps $f\colon B G \to t_0 X$ between a classifying stack and a scheme $t_0 X$ are the constant ones. Hence we conclude that the truncation of this spectral mapping stack is equivalent to the scheme $t_0 X$, the truncation of $X$.
\end{proof}

\subsection{Topological Hochschild homology}
As we saw, for a height $n$ formal group $\formalgroup$ over a finite field $k$, there exists a lift $D(\formalgroup^{\un})$ of the Cartier dual of $\formalgroup$; this allows one to define a lift of $\formalgroup$-Hochschild homology. Let us show that when the formal group is $\widehat{\GG_m}$, this lift is precisely topological Hochschild homology, at least after $p$-completion, as one would expect. For the remainder of this section, we let $\formalgroup= \widehat{\GG_m}$,  the formal multiplicative group. 

Let $X$ be a fixed spectral stack. We remark that there exists an adjunction of $\infty$-topoi 
$$
\pi^*\colon \mathcal{S} \rightleftarrows \SStk_{X} \colon\pi_*, 
$$
where on the right-hand side, one has the $\infty$-category of spectral stacks over $X$. 
In the following, we think of $S^1$ as a ``constant stack'' obtained by the adjunction
above.

\begin{prop}\label{Prop:9.8}
There exists a canonical map
$$
S^1 \lra B D\left(\formalgroup^{\un}\right)
$$
of group objects in the $\infty$-category of spectral stacks over $\Sph_{p}$. This gives a lift of the map 
$$
S^1 \lra B \Fix. 
$$
\end{prop}

\begin{proof}
By  \cite[Construction 3.3.1]{moulinos2019universal}, there is a canonical map
\begin{equation} \label{eq:9.3}
    \Z \lra \mathsf{Fix}
\end{equation}
in the category of fpqc abelian sheaves over $\Spec(\Z_{p})$. Note that this is in fact a map of ring objects, \textit{cf}. \cite[Appendix C.1.1]{drinfeldformalgroup}, and is unique as such, by the initiality of $\Z$. We would like to lift this to a map $\Z \to D(\formalgroup^{\un})$ of abelian group objects in $\SStk_{\Sph_{p}}$. We construct a lift directly and show that it base changes to the map $\Z \to \mathsf{Fix}$. For this we use the  construction of  Cartier duals of $\on{CMon}^{\gp}$-valued functors from \cite[Construction 3.7.1 and Proposition 3.9.6]{lurie2017elliptic}. Working in $\Stk_{\Sph_p}$, there is a canonical map of abelian group objects
$$
\formalgroup^{\un} \simeq \widehat{\GG_m} \xrightarrow{\;\;i\;\;} \GG_m
$$
given by the inclusion of the formal completion along the identity section, where $\GG_m = \Spec(\Sph[t, t^-1])$. Taking Cartier duals in the sense of \cite[Construction 3.7.1]{lurie2017elliptic}, we obtain a map
$$
D(\GG_m) \lra D\left(\formalgroup^{\un}\right),  
$$
where we remark that the Cartier dual of $\formalgroup^{\un}$ in this sense agrees with our usage of the notation $D(\formalgroup^{\un})$.\footnote{Indeed, the Cartier duality assignment of \cite{lurie2017elliptic} restricts to the duality equivalence of Proposition~\ref{spectralduality}} Note further that there is a canonical map of monoid objects 
$$
\Z \lra D(\GG_m) = \hom_{\on{Fun}(\on{CAlg}_{\Sph_p}^{\cn},\on{CMon}^{\gp})}(\GG_m, \GG_m); 
$$
this can be seen at the level of functors of points. The composite map 
$$
\Z \lra D(\GG_m) \lra D\left(\formalgroup^{\un}\right)
$$ 
gives our candidate lift. Base changing this along $\Spec(\Z_p) \to \Spec (\Sph_p)$, we obtain a map  $\Z  \simeq D(\GG_m )\to \mathsf{Fix}$, which is a map of ring schemes over $\Z_p$. This follows by construction since  on $A$-points, it is the map induced by the universal property of formal completion: 
\begin{align*}
\Hom_{\on{coGroup}(\on{CAlg})}(A[t, t^{-1}], A[t, t^{-1}])  \simeq \hom_{\Grp}(\GG_m, \GG_m)(A) 
 & \lra \hom_{\Grp}\left(\widehat{\GG_m}, \GG_m\right)(A) \\ 
  & \hphantom{\lra} \simeq \hom_{\Grp}\left(\widehat{\GG_m}, \widehat{\GG_m}\right)(A) \\ 
 & \hphantom{\lra}\simeq  \Hom_{\on{coGroup}(\on{Adic})}(A[[t]], A[[t]]),
\end{align*}
which is a map of rings.  Note that the equivalence on the third line arises from the fact that every group map $\widehat{\GG_m} \to \GG_m$ factors as $\widehat{\GG_m} \xrightarrow{ \alpha} \widehat{\GG_m} \xrightarrow{ i} \GG_m$. Hence we recover the map $\Z \to \mathsf{Fix}$ of ring schemes. 
By taking classifying stacks of this lift, we obtain the desired map
\begin{equation*}\pushQED{\qed}
S^1 \simeq B \Z \lra B D\left(\formalgroup^{\un}\right).\qedhere
\popQED
	\end{equation*}
\renewcommand{\qed}{}    
\end{proof}

Let $X= \Spec(A)$ be an affine spectral scheme. By taking mapping spaces, Proposition~\ref{Prop:9.8} furnishes a map
$$
\Map\left( B \formalgroup_{\un}^{\vee} , X\right)  \lra \Map\left(S^1 ,X\right);
$$
applying global sections further  begets a map
$$
f\colon \on{THH}(A) \lra R \Gamma\left(\Map\left( B \formalgroup_{\un}^{\vee} , X\right), \OO \right) 
$$
of $E_{\infty}$ $\Sph_{p}$-algebras.

\begin{thm} \label{Thm:9.9}
Let   
$$
f_p\colon \on{THH}\left(A; \mathbb{Z}_p\right) \lra R \Gamma\left(\Map\left( B \formalgroup_{\un}^{\vee} , X\right), \OO \right)^{\widehat{ }}_p
$$
denote the $p$-completion of the above map. Then $f$ is an equivalence. 
\end{thm}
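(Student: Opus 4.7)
The plan is to follow the strategy of the proof of Proposition \ref{hochschildhomologygm}, now in the spectral setting over $\Sph_p$. First I reduce to the universal case $X = \Spec \Sph_p\{t\}$, where $\Sph_p\{t\}$ denotes the free connective $E_\infty$-$\Sph_p$-algebra on one generator. Since $\Sph_p\{t\}$ is a compact projective generator of $\on{CAlg}_{\Sph_p}^{\on{cn}}$, every such algebra is a sifted colimit of free algebras, and both $\on{THH}(-)$ and $R\Gamma(\Map(B \formalgroup^{\vee}_{un}, -), \OO)$ send colimits in $\on{CAlg}$ to limits in $\Mod_{\Sph_p}$; the latter uses representability (Theorem \ref{representability}) together with the cohomological-dimension and base-change input of Proposition \ref{morebasechangeshiiiii}. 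Since $p$-completion preserves limits, it suffices to treat $X = \Spec \Sph_p\{t\}$.

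In this universal case, for each test connective $\Sph_p$-algebra $B$, both sides unwind to $\Omega^\infty \pi_*(\OO_{BG \times B})$ with $G = \Z$ on the left and $G = \formalgroup^{\vee}_{un}$ on the right, where $\pi: BG \times B \to \Spec \Sph_p$ is the structural map. The question is thereby reduced to showing that the natural map induced from $S^1 \to B\formalgroup^{\vee}_{un}$,
\begin{equation} \label{mainreduction}
\Sph_p^{S^1} \to C^*(B\formalgroup^{\vee}_{un}, \OO)^{\wedge}_{p},
\end{equation}
is an equivalence of $E_\infty$-algebras.

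To verify (\ref{mainreduction}), I identify both sides as Koszul duals of the same adic $E_\infty$-bialgebra. Because $R^{un}_{\widehat{\GG_m}} \simeq \Sph_p$, the Hopf algebra of functions on $\formalgroup^{\vee}_{un}$ is the linear dual of $\Sph_p[[u]]$, the completion of the spherical group algebra $\Sph_p[\Z]$ at the augmentation ideal $u = t - 1$. The delooping pullback square expressing $\formalgroup^{\vee}_{un} \simeq \Sph_p \times_{B\formalgroup^{\vee}_{un}} \Sph_p$, dualized on global sections, realizes $C^*(B\formalgroup^{\vee}_{un}, \OO)$ as the Koszul dual of $\Sph_p[[u]]$. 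Dually, the exotic ($p$-complete) Eilenberg--Moore theorem of Dwyer--Carlsson computes $\Sph_p \otimes_{\Sph_p^{S^1}} \Sph_p \simeq \Sph_p[[u]]$, exhibiting $\Sph_p[[u]]$ also as the Koszul dual of $\Sph_p^{S^1}$. Uniqueness of Koszul duals then yields (\ref{mainreduction}), and by construction this equivalence agrees with the $p$-completion of the comparison map $f$.

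The main obstacle is justifying the exotic convergence: since $S^1$ is not simply connected, the classical Eilenberg--Moore spectral sequence does not converge to $\Sigma^\infty_+ \Omega S^1 = \Sph[\Z]$, and one must pass to $p$-completion to recover $\Sph_p[[u]]$ on the nose; this requires invoking the derived completion results of Carlsson and their cosimplicial realization by Dwyer. Once this input is in hand, the remaining ingredients --- representability, reduction to the universal case, and the uniqueness of Koszul duals --- follow directly from results already established in Sections \ref{spectralll} and \ref{liftsofourshittosphere}.
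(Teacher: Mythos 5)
Your proposal takes a genuinely different route from the paper's, and that route has real gaps. The paper's proof is much shorter: since $f_p$ is a map of connective $p$-complete spectra, it is an equivalence if and only if it becomes one after tensoring with $\mathbb{F}_p$. The left-hand side then becomes $\on{HH}(A\otimes_{\Sph_p}\mathbb{F}_p/\mathbb{F}_p)$ by base change for $\on{THH}$, while the right-hand side becomes the global sections of $\Map(B\formalgroup_{un}^{\vee},X)\times\Spec\mathbb{F}_p\simeq\Map(B\mathsf{Fix},X\times\Spec\mathbb{F}_p)$, using the representability of the mapping stack (Theorem \ref{representability}) to invoke the base-change result of Proposition \ref{morebasechangeshiiiii}; Proposition \ref{hochschildhomologygm} then identifies this with the same $\on{HH}(A\otimes_{\Sph_p}\mathbb{F}_p/\mathbb{F}_p)$, and one is done. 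No Eilenberg--Moore or Koszul duality over the sphere is needed.

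The concrete gaps in your argument are these. First, ``uniqueness of Koszul duals'' does not give you what you want: you exhibit $\Sph_p[[u]]$ as the Koszul dual of $\Sph_p^{S^1}$ (via exotic Eilenberg--Moore) and $C^*(B\formalgroup_{un}^{\vee},\OO)$ as the Koszul dual of $\Sph_p[[u]]$ (via descent along $\pt\to B\formalgroup_{un}^{\vee}$), so your conclusion requires the double-centralizer statement that $\Sph_p^{S^1}$ agrees with the Koszul dual of its own Koszul dual. Koszul duality is not an equivalence of categories, and since $S^1$ is not simply connected this biduality is exactly the delicate convergence point; it does not follow from a universal property. Second, both the descent identification of $C^*(B\formalgroup_{un}^{\vee},\OO)$ and your reduction to $X=\Spec\Sph_p\{t\}$ (where you need global sections of the mapping stack to commute with sifted colimits of the target) require $B\formalgroup_{un}^{\vee}$ to be of finite cohomological dimension over $\Sph_p$, which is established nowhere in the paper --- only the corresponding statement over $\Z_p$ and $\mathbb{F}_p$ is available from \cite{moulinos2019universal}. (A smaller slip: $\on{THH}$ preserves colimits of algebras; it does not send colimits to limits.) In short, each hard step of your route is precisely what the mod-$p$ reduction in the paper is designed to sidestep, and without supplying proofs of the exotic Eilenberg--Moore convergence over $\Sph_p$, the biduality, and the cohomological-dimension bound, the argument does not close.
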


\begin{proof}
Since this is a map of $p$-complete spectra, it is enough to verify that it is an equivalence upon tensoring with the Moore spectrum $\Sph_p / p$. In fact, since these are both connective spectra, one can go further and test this simply by tensoring with $\mathbb{F}_p$ (\textit{e.g.}, by \cite[Corollary A.33]{mao2020perfectoid}). Hence we are reduced to showing that 
$$
\on{THH}\left(A; \mathbb{Z}_p\right) \otimes_{\Sph_p} \mathbb{F}_p \lra R \Gamma\left(\Map\left( B \formalgroup_{\un}^{\vee} , X\right), \OO \right)^{\widehat{ }}_p \otimes \mathbb{F}_p 
$$
is an equivalence of $E_\infty$ $\mathbb{F}_p$-algebras. 
By generalities on topological Hochschild homology, we have the following identification of the left-hand side: 
$$
\on{THH}\left(A; \mathbb{Z}_p\right) \otimes_{\Sph_p} \mathbb{F}_p  \simeq \on{HH}\left(A \otimes_{\Sph_p} \mathbb{F}_p / \mathbb{F}_p \right). 
$$
Now we can use Theorem~\ref{representability} to identify the right-hand side with the global sections  of the  mapping stack  
$$
\Map\left(B \formalgroup_{\un}^{\vee}, X\right)  \times \Spec\left(\mathbb{F}_p\right) \simeq  \Map\left(B \formalgroup_{\un}^{\vee} \times \Spec \left(\mathbb{F}_p\right), X \times \Spec \left(\mathbb{F}_p\right)\right). 
$$
By Proposition~\ref{hochschildhomologygm}, this is precisely $\on{HH}(A \otimes_{\Sph_p} \mathbb{F}_p / \mathbb{F}_p )$, whence the equivalence.
\end{proof}

\section{Filtrations in the spectral setting}\label{sec10}
In Section~\ref{deformationofGm}, an interpretation of the HKR filtration on Hochschild homology was given in terms of a degeneration of $\widehat{\GG_m}$ to $\widehat{\GG_a}$. Moreover, this was expressed as an example of the  deformation to the normal cone construction of Section~\ref{deformationsection}. 

In Section~\ref{sec9}, we further saw that these $\formalgroup$-Hochschild homology theories may be lifted beyond the integral setting. A natural question then arises: do the filtrations come along for the ride as well? Namely, does there exist a filtration on $\on{THH}^{\formalgroup}(-)$ which recovers the filtered object corresponding to $\on{HH}^{\formalgroup}(-)$ upon base changing along $R^{\un}_{\formalgroup} \to k$?

We will not seek to answer this question here. However, we do give a reason why some negative results might be expected. As mentioned in the introduction, many of the constructions do work integrally. For example, one can talk about the deformation to the normal cone $\Def_{\filstack}(\formalgroup)$ of an arbitrary formal group over $\Spec(\Z)$. If we apply this to $\widehat{\GG_m}$, we obtain a degeneration of the formal multiplicative group to the formal additive group. We let $\Def_{\filstack}(\widehat{\GG_m})^{\vee}$ be the Cartier dual, as in Section~\ref{filteredformalgroupsection}.  In \cite{scheme}, the Cartier dual to $\widehat{\GG_m}$ is described to be $\Spec(\on{Int}(\Z))$, the spectrum of the ring of integer-valued polynomials on $\Z$. Moreover, it is shown that $ B \Spec(\on{Int}(\Z))$ is the affinization of $S^1$; hence one can recover (integral) Hochshild homology from this. 

Let us suppose there exists a lift of $\Def(\widehat{\GG_m})^{\vee}$ to the sphere spectrum, which we will denote by  $\Def^{\Sph}(\widehat{\GG_m})^{\vee}$. This would allow us to define a mapping stack in the $\infty$-category $\sStk_{\filstack}$ of spectral stacks over the spectral variant of $\filstack$. By the results of \cite{geometryofilt}, this comes equipped with a filtration on its cohomology, which we would like to think of as recovering topological Hochschild homology.  

However, over the special fiber $B \GG_m \to \filstack$, we would expect that such a lift $\Def^{\Sph}(\widehat{\GG_m})^{\vee}$ recovers the formal additive group $\widehat{\GG_a}$. More precisely, we would get a formal group over the sphere spectrum $\formalgroup \to \Spec(\Sph)$ which pulls back to the formal additive group $\GG_a$ along the map $\Sph \to \Z$.  However, by \cite[Proposition 1.6.20]{ellipticII}, this cannot happen. Indeed, there it is shown that $\widehat{\GG_a}$ does not belong to the essential image of $\on{FGroup}(\Sph) \to \on{FGroup}(\Z)$. 

We summarize this discussion into the following proposition. 

\begin{prop} \label{negativeresult}
There exists no lift of\, $\Def_{\filstack}(\formalgroup_m)$ over to the sphere spectrum. In particular, there exists no formal group $\widetilde{\formalgroup}$ over $\filstack$ relative to $\Sph$ such that $\widetilde{\formalgroup} \times \Spec(\Z) \simeq \Def_{\filstack}(\widehat{\GG_m})$. 
\end{prop}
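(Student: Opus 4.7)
The plan is to argue by contradiction, leveraging Lurie's obstruction result \cite[Proposition 1.6.20]{ellipticII} that the formal additive group $\widehat{\GG_a}$ does not lie in the essential image of the base-change functor $\on{FGroup}(\Sph) \to \on{FGroup}(\Z)$. The strategy is to pull back a hypothetical lift along the closed point $0 \colon B\GG_m \to \filstack$ and show that this would produce precisely a lift of $\widehat{\GG_a}$ to $\Sph$.

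First I would suppose, for contradiction, that there exists a spectral filtered formal group $\widetilde{\formalgroup} \to \filstack_{\Sph}$ (where $\filstack_{\Sph}$ denotes the spectral version of $\filstack$, defined as $\AAA^1_{\Sph}/\GG_m$) whose base change along the map $\Spec \Z \to \Spec \Sph$ recovers the classical filtered formal group $Def_{\filstack}(\widehat{\GG_m})$. By Proposition \ref{gottausethistoo} and Proposition \ref{specialfiberformalgroup}, pulling $Def_{\filstack}(\widehat{\GG_m})$ back along the closed point $0 \colon B\GG_m \to \filstack$ yields the formal completion of the tangent Lie algebra of $\widehat{\GG_m}$, namely $\widehat{\GG_a}$ (equipped with its weight-one $\GG_m$-action).

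Next I would invoke compatibility of pullback with base change in the setting of (spectral) stacks over $\filstack$. Consider the commutative diagram
$$
\xymatrix{
B \GG_m \ar[r]^{0} \ar[d] & \filstack \ar[d] \\
B \GG_{m,\Sph} \ar[r]^{0_{\Sph}} & \filstack_{\Sph}
}
$$
The pullback $0_{\Sph}^*(\widetilde{\formalgroup})$ would then be a spectral formal group over $B\GG_{m,\Sph}$, and its further pullback along $\Spec \Sph \to B\GG_{m,\Sph}$ would produce a spectral formal group $\formalgroup_{\Sph}$ over $\Sph$. Forgetting the $\GG_m$-equivariance, by compatibility of base change (cf. Proposition \ref{basechangeinspectralsetting}) and the assumed identification $\widetilde{\formalgroup} \times_{\Spec \Sph} \Spec \Z \simeq Def_{\filstack}(\widehat{\GG_m})$, we would obtain
$$
\formalgroup_{\Sph} \times_{\Spec \Sph} \Spec \Z \simeq \widehat{\GG_a}
$$
as (discrete) formal groups over $\Z$. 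That is, $\widehat{\GG_a}$ would lie in the essential image of the functor $\on{FGroup}(\Sph) \to \on{FGroup}(\Z)$.

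The final step is then to cite \cite[Proposition 1.6.20]{ellipticII}, which asserts precisely that this essential image does \emph{not} contain $\widehat{\GG_a}$. This contradiction establishes the non-existence of the lift. The main (essentially only) subtlety is ensuring that the base change identifications one uses in passing from the filtered setting over $\filstack_{\Sph}$ down to an honest spectral formal group over $\Sph$ are valid; this follows from the fact that $0 \colon B\GG_m \to \filstack$ is a closed point and pullbacks in the relevant $\infty$-topoi commute with the forgetful/underlying-formal-group constructions. No genuine computation is needed; the whole argument is a formal reduction to the cited spectral obstruction.
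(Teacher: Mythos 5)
Your argument is correct and is essentially the same as the paper's: the paper also argues that a hypothetical lift would, upon restriction to the special fiber $B\GG_m \to \filstack$, produce a spectral formal group over $\Sph$ base-changing to $\widehat{\GG_a}$ over $\Z$, contradicting \cite[Proposition 1.6.20]{ellipticII}. Your version merely spells out the base-change compatibilities slightly more explicitly than the paper's informal discussion preceding the proposition.
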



\providecommand{\bysame}{\leavevmode\hbox to3em{\hrulefill}\thinspace}
\providecommand{\MR}{\relax\ifhmode\unskip\space\fi MR }
\providecommand{\MRhref}[2]{%
  \href{http://www.ams.org/mathscinet-getitem?mr=#1}{#2}
}
\providecommand{\href}[2]{#2}

\end{document}